 \newcommand{\Z}{\mathbb{Z}}
 \newcommand{\R}{\mathbb{R}}
 \newcommand{\E}{\mathbb{E}}
 \newcommand{\EE}{\mathbb{E}}
\newcommand{\cD}{\mathcal{D}}
\newcommand{\cT}{\mathcal{T}}
\newcommand{\cX}{\mathcal{X}}
\newcommand{\cF}{\mathcal{F}}
 \newcommand{\AAA}{\mathcal{A}}
  \newcommand{\FFF}{\mathcal{F}}
\newcommand{\HHH}{\mathcal{H}}
  \newcommand{\III}{\mathcal{I}}
 \newcommand{\bea}{\begin{eqnarray}}
\newcommand{\ena}{\end{eqnarray}}
\newcommand{\beas}{\begin{eqnarray*}}
\newcommand{\enas}{\end{eqnarray*}} 
 \newtheorem{assumption}{Assumption}
\newtheorem{theorem}{Theorem}
\newtheorem{definition}{Definition}
\newtheorem{proposition}[theorem]{Proposition}
\newtheorem{lemma}[theorem]{Lemma}
\newtheorem{corollary}[theorem]{Corollary}
\newtheorem{example}[theorem]{Example}
\newtheorem{remark}[theorem]{Remark}
\begin{document}

\title{Stein's method for comparison of univariate distributions}

\author{Christophe Ley\footnote{Ghent University,
      Belgium christophe.ley@ugent.be} \and Gesine
    Reinert\footnote{Gesine~Reinert, University of Oxford, United
      Kingdom, reinert@stats.ox.ac.uk} \and Yvik Swan 
\footnote{Universit\'e de Li\`ege, Belgium, yswan@ulg.ac.be}
}

\date{}

\maketitle

\begin{abstract}
We propose a new general version of {Stein's method for univariate distributions}. 
  In particular we propose a canonical definition of the \emph{Stein
    operator} 
  of a probability distribution {which is based on a linear difference
    or differential-type operator}. The resulting \emph{Stein
    identity} highlights the unifying theme behind the literature on
  Stein's method (both for continuous and discrete distributions).
  Viewing the Stein operator as an operator acting on pairs of
  functions, we provide an extensive toolkit for distributional
  comparisons. Several abstract approximation theorems are
  provided. Our approach is illustrated for comparison of several
  pairs of distributions : normal vs normal, sums of independent
  Rademacher vs normal, normal vs Student, {and} maximum of random
  variables vs exponential, Fr\'echet and Gumbel.
\end{abstract}

\tableofcontents
  
 \section{Introduction}
 \label{sec:introduction}
 Stein's method is a popular tool in applied and theoretical
 probability, widely used for Gaussian and Poisson approximation
 problems. The principal aim of the method is to provide quantitative
 assessments in distributional {comparison} statements of the form
 $W \approx Z$ where $Z$ follows a known and well-understood
 probability law (typically normal or Poisson) and $W$ is the object
 of interest. To this end, Charles Stein \cite{S72} in 1972 laid the
 foundation of what is now called ``Stein's method''. For Poisson
 approximation his student Louis Chen \cite{C75} adapted the method
 correspondingly, and hence for Poisson approximation the method is
 often called ``Stein-Chen method'' or ``Chen-Stein method''.  In
 recent years a third very fruitful area of application was born from
 Ivan Nourdin and Giovanni Peccati's pathbreaking idea to intertwine
 Stein's method and Malliavin calculus.  First proposed in
 \cite{NoPe09}, this aspect of the method is now referred to as
 Malliavin-Stein (or Nourdin-Peccati) analysis. For an overview we
 refer to the monographs \cite{Stein1986,BaHoJa92,NP11,ChGoSh11} as
 well as Ivan Nourdin's dedicated webpage
 https://sites.google.com/site/malliavinstein.

 Outside of the Gaussian and Poisson frameworks, {for univariate
   distributions} the method has now also been shown to be effective
 for~: exponential approximation \cite{ChFuRo11,PeRo11}, Gamma
 approximation \cite{Luk1994,Pi04,NoPe09}, binomial approximation
 \cite{ehm1991binomial}, Beta approximation
 \cite{goldstein2013stein,Do14}, the asymptotics of rank distributions
 \cite{fulman2012stein}, inverse and variance Gamma approximation
 \cite{gaunt2014variance,gaunt2013stein}, Laplace approximation
 \cite{PR12}, negative binomial approximation \cite{BGX13} or
 semicircular approximation \cite{GoTi03,GoTi06}.  It can also be
 tailored for specific problems such as preferential attachment graphs
 \cite{PeRoRo12}, the Curie-Weiss model \cite{CS11}, and other models
 from statistical mechanics \cite{EiLo10,eichelsbacher2014rates}. This
 list is by no means exhaustive and we refer the reader to the
 {webpage} https://sites.google.com/site/steinsmethod
 for an accurate overview of this rapidly moving field. {For a target
   distribution for which Stein's method has not yet been developed,
   setting up the method can appear daunting. In this paper we give a
   straightforward yet very flexible framework which not only
   encompasses the known examples but which is also able to cover any
   new distributions which can be given in explicit form. }
 

Broadly speaking, Stein's method consists of  two distinct components, namely
\begin{itemize}
\item[]  \underline{Part~A}: a framework allowing to convert the problem of bounding the 
  error in the approximation of $W$ by $Z$ into a problem of bounding the
  expectation of a certain functional of $W$.  

\medskip

\item[] \underline{Part~B}: a collection of techniques to bound the
  expectation appearing in Part A; the details of these techniques are
  strongly dependent on the properties of $W$ as well as on the form
  of the functional.
 \end{itemize}
For a target probability distribution ${\rm P}$ with support $\III$,
Part A of the method can be sketched as follows. First find a suitable
operator ${\AAA} {:= \mathcal{A}_{\rm P} =} \mathcal{A}_Z$ (called \emph{Stein operator}) and
a wide class of functions $\FFF(\AAA):={\FFF(\AAA_{\rm P}) =} \FFF(\AAA_Z)$ (called
\emph{Stein class}) such that 
\begin{equation}
  \label{steingen} Z \sim {\rm P} \mbox{ if and only if }\E[ \AAA
  f(Z)] = 0 \mbox{ for all }f \in \FFF(\AAA)
\end{equation}
(where $Z \sim \mathrm{P}$ means that $Z$ has distribution
$\mathrm{P}$). 
This equivalence is called a \emph{Stein characterization} of ${\rm
  P}$. Next let $\HHH$ be a measure-determining class on
$\III$. Suppose that for
each $h \in \HHH$ one can find a solution $f = f_h \in \FFF(\AAA)$ of the
\textit{Stein equation}
\begin{equation}
  \label{steineq} h(x) - \E [h(Z)] = \AAA f(x),
\end{equation}
where $Z \sim
{\rm P}$. 
Then, if taking expectations is permitted, we have 
\begin{equation}\label{eq:diffexp}
  \E[h(W)]- \E [h(Z)] = \E \left[ \AAA f(W) \right].
\end{equation}
There exist a number of probability distances (such as
the Kolmogorov, the Wasserstein, and the Total Variation distance)
which can be represented as \emph{integral probability metrics} of the
form 
$$  d_{\mathcal{H}}(W, Z) = \sup_{h \in \mathcal{H}} \left| \E[ h(W)] - \E [h(Z)] \right|,
$$
see \cite[Appendix C]{NP11} or \cite{GS02,Rac91} for
an overview.  From \eqref{eq:diffexp} we get
\begin{equation}
\label{eq:firstepte}
  d_{\mathcal{H}}(W, Z) \le \sup_{f \in \mathcal{F}(\mathcal{H})} \left| \E \left[ \AAA f(W) \right] \right|
\end{equation}
where  $\mathcal{F}(\mathcal{H}) = \left\{ f_h \, | \, h \in
   \mathcal{H} \right\}$  is the
 collection of solutions of \eqref{steineq} for functions $h \in
 \mathcal{H}$. 

 When  only certain features of $W$ are
known, for example that it is a sum of weakly dependent random
variables, then  \eqref{eq:firstepte} is the usual starting point for
Part B of Stein's method. Now suppose that, furthermore, a Stein operator $\AAA_W$ (and a class
$\mathcal{F}(\AAA_W)$) is available for $W$. 
  Suppose also that  $ \FFF(\AAA_Z) \cap
\FFF(\AAA_W) \neq \emptyset$ and choose $\HHH$ such that all
solutions $f$ of the Stein equation \eqref{steineq} for $\AAA_Z$ and
$\AAA_W$ belong to this intersection. Then 
 \beas
  \E [h(W)] - \E[ h(Z) ]&=& \E [\AAA_Z f(W)] \\
  &=& \E[ \AAA_Z f(W)] - \E[ \AAA_W f(W)] \enas 
(because $ \E[ \AAA_W f(W)]  = 0$) and 
\begin{equation}\label{eq:firstepte2}
 d_{\mathcal{H}}(W, Z) \le \sup_{f \in
  \FFF(\AAA_Z) \cap  \FFF(\AAA_W)} | \E [\AAA_W f(W)-\AAA_Z
f(W)]|. 
\end{equation}
Stein \cite{S72} {discovered the magical relation that} 
the
 r.\,h.\,s.~of \eqref{eq:firstepte} or \eqref{eq:firstepte2} provides a handle to assess the
 proximity between the laws of $W$ and $Z$; this is precisely the
 object of Part B of Stein's method. 

 In {many} cases, not only are the functions $f_h$ well-defined, but
 also they possess smoothness properties which render them
 particularly amenable to computations.  Also there exist {many} ways
 by which one can evaluate $ \E \left[ \AAA f(W) \right]$ or
 $ \E [\AAA_W f(W)-\AAA_Z f(W)]$ (even under unfavorable assumptions
 on $W$) including exchangeable pairs (as for example in
 \cite{Stein1986,Ho04,MR2573554,CS11,ChFuRo11,Do14}), biasing
 mechanisms (as in \cite{BaRiSt89,GoRi96,GR97,PeRo11,gaunt2013stein}),
 and other couplings (as in \cite{C75,barbour1985multiple}); see
 \cite{reinert1998couplings,Ro11,chatterjee2014short} for overviews.
 Nourdin and Peccati \cite{NoPe09} paved the way for many elegant
 results in the context of Malliavin calculus, for an overview see
 \cite{NP11}.  See also
 \cite{Ho04,eichelsbacher2008stein,fulman2012stein,goldstein2013stein,Do14,FulmanGoldstein2014b}
 for examples where direct comparison (using the explicit distribution
 of $W$) via \eqref{eq:firstepte2} is used.


 
 Of course the devil is in the detail and the quest for suitable Stein
 operators which are tractable to deal with for the random variables
 in question is essential for the method to be effective. While no
 precise definition of what exactly a \emph{Stein operator} is 
 {most} authors have used
 Stein operators which were differential operators (or difference
 operators in the case of discrete distributions) obtained through a
 suitable variation of one of the {four} following classical
 constructions~:
\begin{itemize}
\item Stein's \emph{density approach} pioneered in \cite{Stein1986}
  relies on the target having an explicit density $p$ (either
  continuous or discrete) and then using integration by parts and
  classical theory of ordinary differential (or difference) equations
  to characterize $p$ (see \cite{CS11,Do14,LS12a,novak,Stein2004} for the
  continuous case, \cite{fulman2012stein,LS13b,novak} for the discrete
  case).
\item Barbour and G\"otze's \emph{generator approach} (see
  \cite{Ba90,G91}) {is} based on classical theory of Markov processes; this
  approach has the added advantage of also providing a probabilistic
  intuition to all the quantities at play. Some references detailing
  this approach for univariate distributions are
  \cite{EdVi12,eichelsbacher2008stein,goldstein2013stein,Ho04,KuTu11,kusuoka2013extension}.
\item Diaconis and Zabell's \emph{orthogonal polynomial approach} (see
  \cite{DZ91}) where they use Rodrigues type formulas, if available,
  for orthogonal polynomials associated with the target distribution.
  See also \cite{S01} as well as \cite{APP11} and related references
  for an extensive study of Stein operators for the Pearson (or Ord)
  family of distributions.
  \item {Probability transformations such as the size bias transformation \cite{BaRiSt89} and the zero bias transformation \cite{GR97} which characterize a distribution through being the unique fixed point of a transformation. See also \cite{GR05} and \cite{PeRoRo12} for examples.} 
\end{itemize}
These three approaches are by no means hermetically separated : often
the operators derived by one method are simple transformations of
those derived by another one.  See for instance \cite{GR05} for a very
general theory on the connection between Stein operators, probability
transforms and orthogonal polynomials.  Other methods of constructing
Stein operators are available.  In \cite{upadhye2014stein} Stein
operators for discrete compound distributions are derived by
exploiting properties of the moment generating function. In
\cite{arras2016stein}, both Fourier and Malliavin-based aproaches are
used to derive operators for targets which can be represented as
linear combinations of independent chi-square random variables.  An
algebraic study of Stein operators {is} initiated in
\cite{gaunt2013stein}, with explicit bounds provided in
\cite{dobler2015iterative}. The
parametric approach presented in \cite{swangeneral,ley2013parametric} laid the foundation to the current work. 

 

 \subsection*{Outline of the paper}

 In this paper we propose a generalization of Stein's density
 approach, in the spirit of \cite{LS13b,ley2013parametric,LS12a} which leads to a
 {canonical} definition of ``the'' differential{-type} operator
 associated to any given density.  {The definition is canonical, or parsimonious,  in the sense that, g}iven a target $p$, 
 {we identify} minimal conditions under which a Stein
 characterization of the form \eqref{steingen} can hold. 
 {Moreover w}e will show with a wealth of examples
 that all the ``useful'' operators mentioned in the introduction can
 be derived as (sometimes not so straightforward) transformations of
 our 
 {operator.}

 In Section \ref{sec:pars-stein-theory} we introduce our 
 {approach} in the
 simplest setting : distributions with continuous probability density
 function.  Two easy applications are provided. In
 Section~\ref{sec:gener-vers-steins} we establish the 
 set-up and introduce our toolbox in all generality.  In Section
 \ref{sec:examples} we discuss different important particular cases
 (which we call standardizations), hereby linking our
 {approach}
 with the
 classical literature on the topic. In
 Section~\ref{sec:grdistr-comp} we provide abstract approximation
 theorems for comparing probability distributions.  In
 Section~\ref{sec:examples-again} we illustrate the power of our
 approach by tackling applications to specific approximation problems.

 \section{The  Stein operator for 
   differentiable {probability density functions}}
\label{sec:pars-stein-theory}
In this section we sketch our approach in the simplest setting : $X$
has absolutely continuous probability density function (pdf) $p$ with
respect to the Lebesgue measure on $\R$. Furthermore we suppose that
$p$ has  interval support $\mathcal{I}$ (i.e.\ $p(x) >0$ for all
$x \in \mathcal{I}$, some real interval which could be unbounded); we denote $a, b$ the {boundary} 
points of $\mathcal{I}$. 

\subsection{The Stein operator}
\label{sec:part-a1-}

\begin{definition}\label{defn:pars-stein-op}
  The Stein class for $p$ is the collection $\mathcal{F}(p)$ of
  functions $f:\R\to\R$ such that (i) $x \mapsto f(x) p(x)$ is
  differentiable, (ii) $x \mapsto (f(x) p(x))'$ is integrable and
  (iii)
  $ \lim_{x \uparrow b}f(x) p(x) = \lim_{x \downarrow a}f(x) p(x) =
  0$.
  The (differential) Stein operator for $p$ is the differential
  operator $\mathcal{T}_{p}$ defined by
  \begin{equation}
    \label{eq:17}
    f \mapsto \mathcal{T}_{p}f :=\frac{(fp)'}{p}
  \end{equation}
with the
  convention that $\mathcal{T}_pf(x)=0$ for $x$ outside of
  $\mathcal{I}$.
\end{definition}

\begin{remark}
  Condition (ii) in Definition \ref{defn:pars-stein-op} may easily be
  relaxed, e.g.\ by only imposing that
  $\int_a^b (f(x) p(x))' dx =: \left[ f(x) p(x) \right]_a^b=0$. This
  condition could also be dispensed with entirely, although this
  necessitates to re-define the operator as 
  $\mathcal{T}_{p}f = {(fp)'}/p - \left[ f(x) p(x) \right]_a^b$. See
  also Remark \ref{rem:definop}.
\end{remark}
\begin{remark}
   It should be stressed that the
assumptions on $f \in \mathcal{F}(p)$ can be quite stringent,
depending on the properties of $p$. There is, for instance, no
guarantee a priori that constant functions $f \equiv 1$ belong to
$\mathcal{F}(p)$, as this requires that $p$ cancels at the edges of
its support and is differentiable with integrable derivative; such
assumptions are satisfied neither in the case of an exponential target
nor in the case of a beta target.
\end{remark}


Obviously we can always expound the derivative in \eqref{eq:17} (at
least formally, because care must be taken with the implicit indicator
functions) to obtain the equivalent expression
 \begin{equation}\label{eq:36}
   \mathcal{T}_pf(x) = f'(x) + \frac{p'(x)}{p(x)}f(x)
 \end{equation} 
 whose form is reminiscent of the operator advocated by
 \cite{Stein2004,CS11}. In our experience, however, operator
 \eqref{eq:36} is unlikely to be useful in that form because most of
 the conditions inherited from $p$ are still implicit in the
 properties of $f$, as illustrated in the following example. 
\begin{example}\label{ex:part-a1--1}
  If $p(x) \propto (x(1-x))^{-1/2} \mathbb{I}[0,1]$ then
  $\mathcal{F}(p)$ is the collection of functions $f : \R \to \R$ such
  that $f(x)/\sqrt{x(1-x)}$ is differentiable with integrable
  derivative and  {with the limiting behavior} $\lim_{x \to 0, 1}f(x)/\sqrt{x(1-x)} = 0$. Operator
  \eqref{eq:36} becomes
  $ \mathcal{T}_pf(x) = f'(x) + (2x-1)/(2x(1-x))f(x)$.  The operator
  is cumbersome but nevertheless well defined at all points
  $x \in [0,1]$ thanks to the conditions on $f \in \mathcal{F}(p)$.
  In particular these conditions ensure that $f(x)$ cancels at 0 and 1
  faster than 
  {$p(x)$} diverges.  Taking functions of the form
  $f(x) = (x(1-x))^{\alpha}f_0(x)$ with $\alpha>1/2$ suffices. For
  instance the choice $\alpha=1$ yields the operator
  $ \mathcal{A}_pf_0(x) = \mathcal{T}_pf(x) = x(1-x)f_0'(x)+\left(
    \frac{1}{2} -x\right) f_0(x)$
  used in \cite{goldstein2013stein,Do14} for Beta approximation.
\end{example}
The pair $(\mathcal{T}_p, \mathcal{F}(p))$ is uniquely associated to
$p$. By choosing to focus on different subclasses
$\mathcal{F}(\mathcal{A}_p) \subset \mathcal{F}(p)$ one obtains
different operators acting on different sets of functions.
We call the passage from
$(\mathcal{T}_p, \mathcal{F}(p))$ to
$(\mathcal{A}_p, \mathcal{F}(A_p))$ a \emph{parameterization} of the
Stein operator. {{There}} remains full freedom in the choice of
this explicit form and it remains necessary to further understand the
properties of $p$ in order to select those functions
$f \in \mathcal{F}(p)$ for which \eqref{eq:36} will assume the most
tractable expression.  In Example~\ref{ex:part-a1--1} this is achieved
by a simple transformation of the test functions; in other cases the
transformations are much more complex and the resulting operators are
not even necessarily of first order.

\begin{example}[Kummer-$U$ distribution] \label{ex:kummerv1} Let $U(a,
  b, z)$ be the unique solution of the differential equation $z
  d^2U/dz^2 + (b-z) dU/dz -aU = 0$. Then $U(a, b, z)$ is the
 confluent hypergeometric function of the second kind (also known as
 the Kummer $U$ function).  A random variable  $X$ follows the
 Kummer-$U$ distribution $K_s$ if its  density is
 \begin{equation*}
      \kappa_s(x) = \Gamma(s) \sqrt{\frac{2}{s \pi}} {\exp}\left(
     \frac{-x^2}{2s} \right) V_s(x) \mathbb{I}(x\in(0,\infty)), \quad s\geq 1/2,
 \end{equation*}
 with $\Gamma(s)$ the Gamma function and
 $V_s(x) = U \left( s-1, \frac{1}{2}, \frac{x^2}{2s} \right)$. The
 class $\mathcal{F}(\kappa_s)$ contains all differentiable functions
 such that
 $\lim_{x\rightarrow0\,{\rm or}\,\infty}f(x)\kappa_s(x) = 0$. As noted
 in \cite{PeRoRo12}, the canonical Stein operator (as given in
 \eqref{eq:36}) is cumbersome.  
One can show by direct  computations that for  differentiable $f_0$
we have 
\begin{equation*}
 \frac{\left( \kappa_s(x)
      \frac{(f_0(x)V_s(x))'}{V_s(x)}  \right)'}{\kappa_s(x) } = s f_0''(x)
  -x f_0'(x) - 2(s-1) f_0(x)  =: \mathcal{A}_0(f_0)(x)
\end{equation*}
for $x>0$, which suggests to consider functions
$f \in \mathcal{F}(\kappa_s)$ of the form
$$f(x) = \frac{(f_0(x)V_s(x))'}{V_s(x)},$$ hereby providing a new
derivation of the \emph{second order} operator given in \cite[Lemma
3.1, Lemma 3.2]{PeRoRo12} where Stein's method was first set up for
this distribution.
\end{example}


\subsection{The generalized Stein covariance identity}
\label{sec:part-a2-}

Given a function $f \in \mathcal{F}(p)$, we now introduce a
\emph{second} class of functions which contains all $g : \R \to \R$
which satisfy the integration by parts identity :
 \begin{equation}
   \label{eq:7}
\int_a^b g(x) (f(x) p(x))' dx =   - \int_a^b g'(x)(f(x) p(x)) dx. 
 \end{equation}
 It is easy to deduce conditions
 under which \eqref{eq:7} holds; these  are summarized in
 the next definition. 
 \begin{definition}
   Let $p$ be as above. To each $f \in \mathcal{F}(p)$ we associate
   $\mathcal{G}(p, f)$, the collection of functions such that\\
    (i) $x
   \mapsto |g(x) (f(x) p(x))'|$,  $x \mapsto |g'(x)(f(x) p(x))|$ are
   both  integrable on $\mathcal{I}$; \\
   (ii) $\left[ g(x)\, f(x) \, p(x)
   \right]_a^b=0$. \\
    We also define 
   $\mathcal{G}(p) = \bigcap_{f \in \mathcal{F}(p)} \mathcal{G}(p,
   f),$
and call these functions the test functions for $p$. 
 \end{definition}
 If $\mathcal{F}(p)$ is not empty then neither are $\mathcal{G}(p,f)$
 and $\mathcal{G}(p)$ because they must contain the constant function
 $g \equiv 1$.  Rewriting identity \eqref{eq:7} in terms of the Stein
 pair $(\mathcal{T}_p, \mathcal{F}(p))$ leads to the \emph{generalized
   Stein covariance identity}
\begin{equation}\label{eq:33}
 \E \left[ g(X) \mathcal{T}_{p}f(X) \right] = - \E \left[
   g'(X) f(X) \right] \mbox{ for all } f \in \mathcal{F}(p) \mbox{ and
 } g \in \mathcal{G}(p, f).
\end{equation}
This identity generalizes several fundamental probabilistic
integration by parts formulas.  For instance if, on the one hand,
$f \equiv 1 \in \mathcal{F}(p)$ then $\mathcal{G}(p, 1)$  contains
all $g : \R \to \R$ that are absolutely continuous with compact
support and
\begin{equation*}
\E \left[ g(X) \rho(X) \right] = - \E \left[
   g'(X)  \right] \mbox{ for all }  g \in \mathcal{G}(p, 1),
\end{equation*} 
with $\rho = \mathcal{T}_p1$ the score function of $X$. If, on the
other hand, $\E[X] = \mu$ is finite then choosing $h(y) = E[X]-y$ leads to Stein's
classical covariance identity
\begin{equation*}
 \E \left[ (X-\mu) g(X) \right] = \E \left[\tau_p(X) g'(X)  \right]
 \mbox{ for all } g \in \mathcal{G}(p,  \tau_p)
\end{equation*}
with 
\begin{equation}\label{eq:steinkernn}
  \tau_p(x) = \frac{1}{p(x)}\int_x^{\infty}(y-\nu)
  p(y)dy
\end{equation}
the so-called \emph{Stein kernel} of $p$ and $\mathcal{G}$ the
corresponding collection of functions; it is easy to see that it
suffices that $g$ be differentiable and bounded at the edges of the
support of $\mathcal{I}$.  This approach was first studied in
\cite{Stein1986} (see also \cite{Do14,KuTu11,APP11}).
\begin{remark}
  Equation \eqref{eq:33} leads us to an alternative definition of the
  Stein operator \eqref{eq:17} as some form of skew-adjoint operator
  to the derivative with respect to integration in $pdx$.
\end{remark}

\subsection{Stein characterizations} 
\label{sec:part-a3-}

In Section~\ref{sec:stein-char} we will show that, under reasonable assumptions on $p$, the classes
$\mathcal{F}(p)$ and $\mathcal{G}(p)$ are sufficiently large to ensure
that \eqref{eq:33} also characterizes the distribution $p$. This
realization leads to a 
{collection} of versions of the 
Stein characterization \eqref{steingen}. For example,    we shall prove  that 
\begin{eqnarray}\label{eq:stec1}
\lefteqn{\mbox{for each }  g \in \mathcal{G}(p), } \nonumber \\
Y &\sim &p \Longleftrightarrow   \E \left[  g(Y) \mathcal{T}_{p}f(Y)\right] = - \E \left[
   g'(Y)f(Y)  \right] \mbox{ for all }  f \in \mathcal{F}(p);
\end{eqnarray}
and 
\begin{eqnarray}\label{eq:stec2}
  \lefteqn{\mbox{for each } f \in \mathcal{F}(p),} \nonumber \\
    Y &\sim& p \Longleftrightarrow \E \left[  g(Y) \mathcal{T}_{p}f(Y)\right] = - \E \left[
    g'(Y)f(Y)  \right] \mbox{ for all }  g \in \mathcal{G}(p,f).
\end{eqnarray}
We refer to Section~\ref{sec:stein-char} for more information as well
as a precise statement of the conditions on $p$ under which such
characterizations hold.

The freedom of choice for test functions $f$ and $g$ implies that many
different characterizations can be immediately deduced from
\eqref{eq:stec1}, \eqref{eq:stec2} or more generally from
\eqref{eq:33}. For example taking $g=1$ in \eqref{eq:stec1} we obtain
\begin{equation}
  \label{eq:stech1}
Y \sim p \Longleftrightarrow  \E \left[f'(X) + f(X) \frac{p'(X)}{p(X)} \right] =
0 \mbox{ for all } f \in \mathcal{F}(p)
\end{equation}
with $\mathcal{F}(p)$ the functions such that $(fp)'$ is integrable
with integral 0. 
If one is  allowed to take 
$f=1$ in \eqref{eq:stec2}
then   we deduce the characterization
\begin{equation}
  \label{eq:stech2}
  Y \sim p \Longleftrightarrow   \E \left[ g(Y) \frac{p'(Y)}{p(Y)} \right] = - \E \left[
   g'(Y) \right] \mbox{ for all }  g \in \mathcal{G}(p,1), 
\end{equation}
with $ \mathcal{G}(p,1)$ the functions such that $g p'$ and $g' p$ are
integrable and $gp$ has integral 0.  Although the difference between
\eqref{eq:stech1} and \eqref{eq:stech2} may be subtle, the last
characterization is more in line with the classical literature on the
topic to be found e.g.\ in \cite{CS11}'s general approach (the
specific conditions outlined in \cite{CS11} for their {approach} 
to work out guarantee that $1 \in \mathcal{F}(p)$).


\subsection{Stein equations and Stein factors}
\label{sec:stein-equat-stein}

The heuristic behind Stein's method outlined in the Introduction is
that if $X\sim p$ is characterized by
$\E \left[ \mathcal{A}_Xf(X) \right] = 0$ over the class
$\mathcal{F}(\mathcal{A}_X)$ then
$\Delta_f(Y, X) := \left| E \left[ \mathcal{A}_X f(Y) \right] \right|$
ought to be a good measure of how far the law of $Y$ is from that of
$X$. Considering equations such as \eqref{eq:diffexp} leads to the
conclusion that indeed $\sup_f\Delta(Y, X)$ provides a bound on all
integral probability metrics such as \eqref{eq:firstepte}.

A similar reasoning starting from the generalized Stein covariance
identity \eqref{eq:33} encourages us to consider generalized Stein
equations of the form
\begin{equation}
  \label{eq:1}
   g(x) \mathcal{T}_{p}f(x) +    g'(x) f(x) = h(x) - \E[h(X)]
\end{equation}
(these are now equations in two unknown functions) and the
corresponding quantities
\begin{equation}
  \label{eq:5}
  \Delta_{f, g}(X, Y) = \left|  \E \left[ g(Y) \mathcal{T}_{p}f(Y) +    g'(Y) f(Y)  \right]  \right|
\end{equation}
for $f \in \mathcal{F}(p)$ and $g \in \mathcal{G}(p, f)$. 

There are many ways to exploit the freedom of choice of test functions
$(f, g)$ in \eqref{eq:5}. A clear aim is to choose these functions in
such a way that the expression is as manageable as possible and to
this end it is natural to consider $f \in \mathcal{F}(p)$ such that
\begin{equation}\label{eq:setcinv}
  \mathcal{T}_p(f) = h
\end{equation}
for some well-chosen $h$. Obviously for \eqref{eq:setcinv} to make
sense it is necessary that $h$ have mean 0 and, in this case, it is
easy to solve this first order equation, at least
formally. Introducing the class $\mathcal{F}^{(0)}(p)$ of functions
with $p$-mean 0 we are now in a position to introduce the
\emph{inverse Stein operator}
\begin{equation}
  \label{eq:12}
  \mathcal{T}_p^{-1} : \mathcal{F}^{(0)}(p) \mapsto \mathcal{F}(p)  :
  h \mapsto \frac{1}{p(x)} \int_a^x h(u) p(u) du.
\end{equation}
Similarly as with the differential Stein operator $\mathcal{T}_p$, the
integral operator $\mathcal{T}_p^{-1}$ is uniquely associated to
$p$. 
\begin{example}
  The Stein kernel \eqref{eq:steinkernn} is $\mathcal{T}_p^{-1}h$
with $h$ the (recentered) identity function.
\end{example}

In general one will choose $f$ and $g$ in such a way as to ensure that
(i) both $\mathcal{T}_pf$ and $f$ have agreeable expressions, and (ii)
solutions to \eqref{eq:1} have good properties, hereby ensuring that
\eqref{eq:5} is amenable to computations. We will show in Sections~\ref{sec:grdistr-comp} 
 and \ref{sec:examples-again} that this is
the case for a wide variety of target distributions. Given 
$\mathcal{H} \subset \mathcal{F}^{(0)}$,  constants such as 
 \begin{equation}
  \label{eq:13}
\sup_{h \in \mathcal{H}} \left\|  \mathcal{T}_p^{-1}h
\right\|_{\infty}, \, \sup_{h \in \mathcal{H}} \left\|  \left(
    \mathcal{T}_p^{-1}h \right)'  \right\|_{\infty}
\end{equation}
will play an important role in the success of the method. These are
usually referred to as the \emph{Stein factors} of $p$, and there is
already a large body of literature dedicated to their study under
various assumptions on $p$, see e.g.\
\cite{brown1995stein,RO12,BGX13,dobler2015iterative}.

\subsection{Comparing {probability densities} by comparing Stein operators}

Now let $X_1$ and $X_2$ have densities $p_1, p_2$ with supports
$\mathcal{I}_1, \mathcal{I}_2$ and Stein pair
$(\mathcal{T}_1, \mathcal{F}_1)$ and $(\mathcal{T}_2, \mathcal{F}_2)$,
respectively.  Equation \eqref{eq:1} leads to an ensemble of
\emph{Stein equations} for $X_i, i=1, 2$ of the form
\begin{equation}
  \label{eq:26}
    h(x) - \E[ h(X_i)] = g'(x) f(x) + g(x) \mathcal{T}_if(x)
\end{equation}
whose solutions are now pairs
$(f, g) \in \mathcal{F}(p_i) \times \mathcal{G}(p_i)$. 
Given a sufficiently regular function $h$ then any pair
$f_i, g_i\in \mathcal{F}(p_i) \times \mathcal{G}(p_i)$ satisfying
\begin{equation}\label{eq:20}
 f_i(x)  g_i(x) = \frac{1}{p_i(x)} \int_{a_i}^x p_i(u) \left(  h(u)
    - \E[ h(X_i)] \right) du
\end{equation}
(with $a_i, i=1, 2$ the lower edge of $\mathcal{I}_i$) is a solution
to \eqref{eq:26} for $ i=1, 2$. Functions such as the one on the rhs
of \eqref{eq:20} have been extensively studied, see e.g.\
\cite{Stein1986,KuTu11}.

There are many starting points from here. For example taking differences
between {E}quations \eqref{eq:26} for $i=1, 2$ leads to the unusual
identity 
\begin{align}\label{eq:35} 
\lefteqn{\E[h(X_2)] - \E[h(X_1)] } \nonumber \\
&  = \left( g_1'(x)f_1(x)-g_2'(x)f_2(x) \right) + 
   \left( g_1(x)\mathcal{T}_1f_1(x)-  g_2(x)\mathcal{T}_2f_2(x)
                          \right) 
\end{align}
 for all
$x \in \mathcal{I}_1\cap \mathcal{I}_2$ and  all $(f_i, g_i) \in \mathcal{F}(p_i) \times \mathcal{G}(p_i)$
which satisfy \eqref{eq:20}.  
Another approach is to pick $(f_1, g_1)$ solution to \eqref{eq:20} and 
$(f_2, g_2) \in \mathcal{F}(p_2) \times \mathcal{G}(p_2)$ (which
ensures that
$ \E \left[ g_2'(X_2) f_2(X_2) + g_2(X_2) \mathcal{T}_2f_2(X_2)
\right] = 0$)
and to take expectations in $X_2$ on both sides of \eqref{eq:26},
yielding
\begin{eqnarray}
  \label{eq:24}
 \lefteqn{ \E[h(X_2)] - \E[h(X_1)] } \nonumber \\
 &  = &  \E \left[   g_1'(X_2) f_1(X_2) + g_1(X_2)
                            \mathcal{T}_1f_1(X_2) \right] \nonumber  \\
 &  =  & \E \left[   g_1'(X_2) f_1(X_2)  -   g_2'(X_2) f_2(X_2)  \right] \nonumber  \\
 && -    \E \left[    g_1(X_2)
                            \mathcal{T}_1f_1(X_2)  -  g_2(X_2)
                            \mathcal{T}_2f_2(X_2) \right],  
\end{eqnarray}
under the additional  assumption that all expectations exist. 
 Identity \eqref{eq:24} is a powerful
starting point for stochastic approximation problems, as one can
handpick the functions $f_i, i=1, 2$ and $g_i, i=1, 2$ best suited to
the problem under study.  
\begin{itemize}
\item Assume that  $f_1 = f_2 = 1$ is permitted and that  $g_1$,
  defined in \eqref{eq:20}, belongs to $\mathcal{G}(p_2)$.  Then from \eqref{eq:24} we deduce that 
 \begin{align*} \E[h(X_2)] -
  \E[h(X_1)] & = \E \left[ g_1(X_2) \left( 
                            \rho_2(X_2)  - 
                            \rho_1(X_2)  \right)\right]
\end{align*}
where $\rho_i$ is the score function of $X_i$. This identity (which
holds as soon as $g_1 \in \mathcal{F}(p_2)$) in turn leads to the
Fisher information inequalities studied, e.g., in
\cite{Sh75,MR2128239,LS12a}.
\item Assume that  $X_1, X_2$ both have mean $\nu$ and pick $f_1, f_2$
  such that  $\mathcal{T}_1f_1 =  \mathcal{T}_2f_2 = x -\nu$. Let
  $g_1$ be the corresponding function from \eqref{eq:20} and assume
  that  $g_1\in \mathcal{G}(p_2)$. Then 
 \begin{equation} \label{eq:29}
\E[h(X_2)] - \E[h(X_1)]   =   
\E \left[  g_1'(X_2) \left( \tau_1(X_2)-  \tau_2(X_2) \right) \right]
\end{equation} 
where $\tau_i$ is the Stein kernel of $X_i$. From here one readily
recovers the key inequalities from \cite{CPU94,CPP01}. This is also the starting
point of the Nourdin-Peccati approach to Stein's method \cite{NP11}.
\end{itemize}
Many other identities can be obtained
. 
We have recently applied this result to the computation of explicit
bounds in a problem of Bayesian analysis, see \cite{ley2015distances}.
Several applications will be provided in Sections
\ref{sec:grdistr-comp} and \ref{sec:examples-again}. We conclude
this section with two  easy applications.

\subsection{Application 1 : rates of convergence to the Fr\'echet distribution}
\label{sec:rates-conv-frech}
Let $X_{\alpha}$ follow the Fr\'echet distribution with tail index
$\alpha$ so that
$P(X_{\alpha}\le x) =: \Phi_{\alpha}(x) =
\mathrm{exp}(-x^{-\alpha})\mathbb{I}(x\ge0)$.
Applying the theory outlined in the previous sections, the Stein class
$\mathcal{F}(\alpha)$ for the Fr\'echet is the collection of all
differentiable functions {$f$} on $\R$ such that
$\lim_{x \to +\infty}f(x)x^{-\alpha-1} e^{-x^{-\alpha}}=\lim_{x \to
  0}f(x)x^{-\alpha-1} e^{-x^{-\alpha}} =0.$
We restrict our attention to functions of the form
$ f(x) = x^{\alpha+1} f_0(x) $.  In this parameterization the
differential Stein operator becomes
\begin{equation}
  \label{eq:28}
  \mathcal{A}_{\alpha}f_0(x) = x^{\alpha+1}f_0'(x) + \alpha
f_0(x).
\end{equation}
The generalized Stein equation \eqref{eq:26} with $g = 1$ reads 
$ x^{\alpha+1}f_0'(x) + \alpha f_0(x) = h(x) - \E h(X_{\alpha})$ and, given
$h(x) = \mathbb{I}(x \le z)$, has unique bounded solution 
\begin{equation}
  \label{eq:fresteq}
  f_z(x) = \frac{1}{\alpha} \left( {\Phi_{\alpha}(x \wedge z)
  }/{\Phi_{\alpha}(x)} -\Phi_{\alpha}(z) \right). 
\end{equation}
This function is continuous and differentiable everywhere except at
$x=z$; it satisfies $0 \le \alpha f_{z}(x) \le 1$ for all $x, z\ge0$
as well as $\lim_{x\to +\infty}f_z(x) = 0$.

Next take $F(x) = (1-x^{-\alpha})\mathbb{I}(x \ge 1)$ the Pareto
distribution and  for $n\ge1$ consider the
random variable $W_n = M_n/n^{1/\alpha}$. Its  probability density
function is 
$p_n(x) = \alpha x^{-\alpha-1} \left( 1-{x^{-\alpha}}/{n}
\right)^{n-1}$
on $[n^{-1/\alpha}, +\infty)$. For each $n$ the random variable $W_n$
has a Stein pair $(\mathcal{T}_n, \mathcal{F}(n))$, say. In order to
compare with the Fr\'echet distribution we consider the
standardization
\begin{equation*}
  \mathcal{A}_n(f_0)(x) = \frac{(x^{\alpha+1} f_0(x) p_n(x))'
  }{p_n(x)} = x^{\alpha+1} f_0'(x) + \alpha  \frac{n-1}{n} \left( 1-\frac{x^{-\alpha}}{n} \right)^{-1} f_0(x)
\end{equation*}
with $f_0$ an absolutely continuous function such that
$$\lim_{x\to +\infty}x^{\alpha+1} f_0(x) p_n(x) = \lim_{x \to
  n^{-1/\alpha}} x^{\alpha+1} f_0(x) p_n(x) =0.$$
The function $f_z$ given in \eqref{eq:fresteq} satisfies these two
constraints. Hence $\E \left[ \mathcal{A}_n(f_z)(W_n) \right]=0$ and
from \eqref{eq:24} we get in this particular case
\begin{equation*}
P(W_n \le z) - \Phi_{\alpha}(z)
   = \alpha E \left[ f_z(W_n) \left( 1-\frac{n-1}{n} \left(
         1-\frac{W_n^{-\alpha}}{n} \right)^{-1}  \right) \right]. 
\end{equation*}
The function $x \mapsto 1-\frac{n-1}{n} \left( 1-\frac{x^{-\alpha}}{n}
      \right)^{-1}$ is negative for all $x \ge n^{-1/\alpha}$. Also, 
      it is easy to compute  explicitly 
$  E \left[ \frac{n-1}{n} \left( 1-\frac{W_n^{-\alpha}}{n}
      \right)^{-1} -1   \right] = \frac{2}{n-1}\left(
    1-\frac{1}{n} \right)^n 
$. We deduce the upper bound 
\begin{align*}
  \sup_{z \in \R} |P(W_n \le z) - \Phi_{\alpha}(z)| \le
  \frac{2e^{-1}}{n-1}. 
\end{align*} 
More general bounds of the same form can be readily obtained  for
maxima of independent random variables satisfying adhoc tail
assumptions. 

\subsection{Application 2 : a CLT for random variables with a Stein
  kernel}
\label{sec:application-2-}

Let $X_1, \ldots, X_n$ be independent  centered {continuous}  random variables with
unit variance and 
Stein kernels $\tau_1, \ldots, \tau_n$ as given by
\eqref{eq:steinkernn}.  Also let $Z$ be a standard normal random
variable independent of all else. The standard normal random variable
(is characterized by the fact that it) has constant Stein kernel
$\tau_Z(x) = 1$. Finally let $W = \frac{1}{\sqrt{n}} \sum_{i=1}^n X_i$. We
will prove in Section \ref{sec:sum} that
\begin{equation}
  \label{eq:25}
  \tau_W(w) = \frac{1}{n} \sum_{i=1}^n \E \left[ \tau_i(X_i) \, | \,
    W=w \right]
\end{equation}
(see Proposition \ref{prop:sumwithscale}) which we can use in \eqref{eq:29} (setting $X_2 = W$ and $X_1 = Z$) to
deduce that
\begin{align*}
  \E \left[ h(W) \right] - \E \left[ h(Z) \right] & = \frac{1}{n}
 \E \left[ g_1'(W)    \sum_{i=1}^n
\left( 1 - \tau_i(X_i)  \right) \right]\\
&  \le\frac{1}{n}  \sqrt{ \E \left[ g_1'(W)^2 \right] \E \left[\left(  \sum_{i=1}^n
\left( 1 - \tau_i(X_i)  \right) \right)^2 \right]}.
\end{align*} 
Classical results on Gaussian Stein's method  
{give} that $\|g_1'\|_{\infty} \le 1$ if $h(x) =
\mathbb{I}(x \le z)$, see   \cite[Lemma
2.3]{ChGoSh11}.   Also, using the fact that  $\E [ 1 -
\tau_i(X_i)]= 0$ for all $i=1, \ldots, n$ as well as   
$\E \left[ \left( \tau_i(X_i) -1 \right)^2 \right] = \E \left[
  \tau_i(X_i)^2 \right] - 1,
$
we get 
\begin{align*}
  \E \left[\left(  \sum_{i=1}^n
\left( 1 - \tau_i(X_i)  \right) \right)^2 \right] & = \mbox{Var}  \left( \sum_{i=1}^n
\left( 1 - \tau_i(X_i)  \right)  \right) 
                                                    = 
\sum_{i=1}^n \left( \E \left[ \tau_i(X_i)^2 \right]-1 \right).
\end{align*}
If the $X_i$ are i.i.d.\, {then}  we finally conclude {that} 
\begin{align}\label{eq:CLTSTk}
\sup_z \left| P(W \le z) - P(Z \le z)  \right| \le \frac{1}{\sqrt{n}} {\sqrt{\left( \E \left[ \tau_1(X_1)^2 \right]-1 \right)}}. 
\end{align}
Of course \eqref{eq:CLTSTk} is for illustrative purposes only because the requirement that the $X_i, i=1, \ldots, n$ possess a
Stein kernel is very restrictive (even more restrictive than the
existence of a fourth moment). In this application it is  assumed
  that $W$ has a continuous distribution; this assumption is not
  necessary because Stein kernels can  be defined for {any univariate distribution.} 
  We will provide a general version of
\eqref{eq:CLTSTk}  in Section \ref{sec:sum}.

\section{The canonical Stein operator}
\label{sec:gener-vers-steins}

In this section we lay down the foundations and set the framework for
our general theory of {canonical} Stein operators.
\subsection{The setup}
\label{sec:setup}

Let $(\cX, {\mathcal{B}}, \mu)$ be a measure space with $\mathcal{X} \subset
\R$ 
(see Remark \ref{rem:dim}).
Let $\mathcal{X}^{\star}$ be the set of real-valued functions on
$\cX$.  We require the existence of a linear operator
\begin{equation*}
  \mathcal{D} :
dom(\cD)\subset \mathcal{X}^{\star} \to im(\mathcal{D})
\end{equation*}
 such that
$dom(\cD)\setminus \left\{ 0 \right\}$ is not empty. As is standard we
define  
\begin{equation*}
  \mathcal{D}^{-1} : im(\mathcal{D}) \to dom(\mathcal{D})
\end{equation*}
as the linear operator which sends any $h = \mathcal{D}f$ onto
$f$. This operator is a right-inverse for $\mathcal{D}$ in the sense
that 
$  \mathcal{D} \left( \mathcal{D}^{-1}h \right)=h$
for all $h \in im (\mathcal{D})$ whereas, for $f\in dom(\mathcal{D})$,
$\mathcal{D}^{-1} \left( 
  \mathcal{D}f \right)$ is only defined up to addition with an element
of $ker(\mathcal{D})$. We impose the following assumption. 
\begin{assumption} \label{ass:prodrule}
There exists a linear operator $\mathcal{D}^{\star}
  : dom(\cD^{\star})\subset \mathcal{X}^{\star} \to
  im(\mathcal{D}^{\star})$ and a constant $l:=l_{\mathcal{X},
    \mathcal{D}}$ such that 
  \begin{equation}
    \label{eq:2}
    \mathcal{D}(f(x)g(x+l))  = g(x) \mathcal{D}f(x)
 + f(x) \mathcal{D}^{\star}g(x)
  \end{equation}
for all $(f, g) \in dom(\mathcal{D})\times dom (\mathcal{D}^{\star})$
and for all $x \in \mathcal{X}$.  
\end{assumption}

Assumption \ref{ass:prodrule} guarantees that operators $\mathcal{D}$
and $\mathcal{D}^{\star}$ are skew-adjoint in 
the sense that
\begin{equation}
  \label{eq:4}
  \int_{\mathcal{X}} g \mathcal{D}f  d\mu = - \int_{\mathcal{X}} f
  \mathcal{D}^{\star}g d\mu
\end{equation}
for all $(f, g) \in dom(\mathcal{D})\times dom
(\mathcal{D}^{\star})$ such that
$g \mathcal{D}f\in L^1(\mu)$, or $f\mathcal{D}^{\star}g \in
L^1(\mu)$, and   $ \int_{\mathcal{X}}\mathcal{D}(f(\cdot)g(\cdot+l))
d\mu  = 0$.

\begin{example}[Lebesgue measure]\label{ex:cpont}
  Let $\mu$ be the Lebesgue measure on $\mathcal{X}=\R$  and take $\cD
  $ the usual strong derivative. Then 
  \begin{equation*}
    \mathcal{D}^{-1}f(x) = \int_\bullet^x f(u) du
  \end{equation*}
is the usual antiderivative. Assumption \ref{ass:prodrule} is satisfied  with $
  \cD^{\star}=\cD$ and $l=0$. 
\end{example}

\begin{example}[Counting measure]\label{ex:discm}
  Let $\mu$ be the counting measure on $\mathcal{X}=\Z$
and take $\cD=\Delta^+$, the forward difference
  operator $\Delta^+f(x) = f(x+1) - f(x)$.   Then 
  \begin{equation*}
    \mathcal{D}^{-1}f(x) = \sum_{k=\bullet}^{x-1}f(k).
  \end{equation*}
Also we have the
discrete product rule 
\begin{equation*}
  \Delta^{+}(f(x)g(x-1))     =  g(x) \Delta^{+}f(x)+ f(x)
  \Delta^{-}g(x)
\end{equation*}  
for all $f, g \in \Z^{\star}$ and all $x \in \Z$.  Hence Assumption
\ref{ass:prodrule} is satisfied with $\mathcal{D}^{\star} =
\Delta^{-}$, the backward difference operator,   and $l=-1$.
\end{example} 

\begin{example}[Counting measure on the grid]\label{ex:discmgrid}
  Let $\mu$ be the counting measure on $\mathcal{X}=\delta \Z$ with $\delta>0$
and take $\cD=\Delta^+_{\delta}$, the scaled forward difference
  operator $\Delta^+_{\delta}f(x) = \delta^{-1} \left(f(x+\delta) -
    f(x) \right)$.   The inverse $\mathcal{D}^{-1}$ is defined similarly as
  in the previous example. Also, setting $\Delta^{-}_{\delta}f(x) =
  \delta^{-1} \left(f(x) - f(x-\delta) \right)$,  
we have the
discrete product rule 
\begin{equation*}
  \Delta^{+}_{\delta}(f(x)g(x-\delta))     =  g(x) \Delta^{+}_{\delta}f(x)+ f(x)
  \Delta^{-}_{\delta}g(x)
\end{equation*}  
for all $f, g \in \Z^{\star}$ and all $x \in \R$.  Hence Assumption
\ref{ass:prodrule} is satisfied with $\mathcal{D}^{\star} =
\Delta^{-}_{\delta}$   and $l=-\delta$.
\end{example}

\begin{example}[Standard normal] \label{N01} Let  $\varphi$ be the standard normal
  density function 
   so that $\varphi'(x) = - x \varphi(x)$. Let $\mu(x)$ be the standard
  normal measure on $\R$ 
  and  take $\mathcal{D} = \mathcal{D}_{\varphi}$ the differential operator defined by
$$\cD_{\varphi} f (x) = f'(x) - x f(x) = \frac{(f(x)
  \varphi(x))'}{\varphi(x)},$$
see e.g. \cite{ledoux2014stein}. 
Then 
\begin{equation*}
  \mathcal{D}_{\varphi}^{-1}f(x) =
  \frac{1}{\varphi(x)}\int_{\bullet}^x f(y) \varphi(y)dy. 
\end{equation*}
Also we have  the product rule 
\begin{eqnarray*}
\cD_{\varphi}(gf)(x) &=& (gf)'(x)  - x g(x) f(x) \\
&=& g(x) \cD_{\varphi} f(x) + f(x) g'(x).
\end{eqnarray*} 
Hence  Assumption \ref{ass:prodrule} is satisfied  with $\cD^{\star} g =
g'$ and $l=0$.
\end{example}

\begin{example}[Poisson] \label{PoiL} Let $\gamma_{\lambda}$ be the 
Poisson    probability mass function with parameter $\lambda$.  
Let $\mu(x)$ be the corresponding
  Poisson measure on $\Z^+$ and take $\mathcal{D} =
  \Delta^+_{\lambda}$ the difference 
  operator defined by 
  \begin{equation*}
    \Delta^+_{\lambda} f(x) = \lambda f(x+1)-xf(x) = \frac{\Delta^+(f(x)x
    \gamma_{\lambda}(x))}{\gamma_{\lambda}(x)}.  
  \end{equation*}
Then 
\begin{equation*}
   (\Delta^+_{\lambda})^{-1}f(x) = \frac{1}{x
     \gamma_{\lambda}(x)}\sum_{k=\bullet}^{x-1} f(k) \gamma_{\lambda}(k)
\end{equation*}
which
{is} ill-defined at
$x=0$  (see, e.g., \cite{BC05,BaHoJa92}).
We have the product rule 
\begin{equation*}
     \Delta^+_{\lambda}(g(x-1)f(x) ) =g(x) \Delta^+_{\lambda}f(x)
     +  f(x) x   \Delta^-g(x). 
\end{equation*}
Hence  Assumption \ref{ass:prodrule} is satisfied  with $\cD^{\star} g (x) =
x \Delta^-g(x)$ and $l=-1$.
\end{example}

\begin{remark}
  
  In all  examples considered above the choice of $\mathcal{D}$ is, in a sense,
arbitrary and other options are available. In the Lebesgue measure
setting of Example \ref{ex:cpont}
one could, for instance,   use
 $\cD$ the derivative in the sense of distributions, or even 
$    \cD f(x) = \frac{\partial}{\partial t} f(P_tx)$
for $x \mapsto P_tx$ some transformation of $\mathcal{X}$; see
e.g. \cite{ley2013parametric}. In the counting measure setting of Example
\ref{ex:discm} the roles of backward and forward difference operators
can be exchanged; these operators can also be replaced by linear
combinations as, e.g., in \cite{hillion2011natural}.  The discrete
construction is also easily extended to general spacing $\delta\neq1$
: if $\mathcal{X} =\delta \Z$, then we can take $\mathcal{D} =
\Delta_{\delta}^{+}$ such that $\mathcal{D}f(x) = f(x+\delta)-f(x)$.
In the Poisson example one could also consider
\begin{equation*}
  \mathcal{D}f(x) = \frac{\lambda}{x+1} f(x+1)-f(x) =
  \frac{\Delta^+(f(x)    \gamma_{\lambda}(x))}{\gamma_{\lambda}(x)}.
  \end{equation*}
  In all cases less conventional choices of $\mathcal{D}$ can be
  envisaged (even forward differences in the continuous setting). 
\end{remark}
\begin{remark}\label{rem:dim}
  Nowhere is the restriction
  to dimension 1 necessary in this subsection. The need for this
  assumption will become apparent when we use the
  setup to construct a general version of Stein's method. Indeed
  although  our approach should in principle be able to provide
  useful insight into Stein's method for multivariate distributions,
  the method does not fare well in higher dimensions (this fact is
  well-known, see e.g.   \cite{ChMe08,nourdin2013entropy,MR2573554}) and
  we will not discuss multivariate extensions further in this paper.
\end{remark}

\subsection{Canonical Stein class and operator}
\label{sec:operator}

Following \cite{goldstein2013stein} we say that a
subset $\III \subset \cX$ is a finite interval if $\III=\{a,b\} \cap
\cX$ for $a, b \in \R$ with $ a \le b$, and an infinite interval if
either $\III=(-\infty, b\}\cap \cX$ or $\III=\{a, \infty) \cap \cX$ or
$\III= \cX$ (provided, of course, that $\cX$ itself has infinite
length). Here $\{$ is used as shorthand for either $($ or $[$, and similarly $\}$ is either
$)$ or $]$. In the sequel we consistently denote  intervals by $\mathcal{I} = \left\{
  a, b \right\}$ where $-\infty \le a \le b \le +\infty$ (we omit the
intersection with $\mathcal{X}$ unless necessary). 

Now consider a real-valued random variable $X$ on $\cX$ such that
$P_X(A) = {\rm P}(X \in A)$ for $A\in\mathcal{B}$ is absolutely
continuous w.r.t.~$\mu$. Let $p = dP_X/d\mu$ be the Radon-Nikodym
derivative of $P_X$; throughout we call $p$ the \emph{density} of $X$
(even if $X$ is {not a continuous} 
random variable). In the sequel, we only
consider random variables such that $p \in dom(\mathcal{D}) $ and
whose support $supp(p) = \left\{ x \in \mathcal{X} \, | \, p(x)>0
\right\} = : \mathcal{I}$ is an interval of $\mathcal{X}$.  For any
real-valued function $h$ on $\mathcal{X}$ we write
\begin{equation*}
  \E_ph = \E h(X) = \int_{\mathcal{X}}h p d\mu = \int_\III h p d\mu; 
\end{equation*}
this expectation exists for all 
functions $h: \cX \rightarrow \R $ such that
$\E_p |h|<\infty$; we denote this set of functions by $L^1_{\mu}(p) 
\equiv L^1_{\mu}(X)$. 
\begin{definition}\label{def:canonev}
  The \emph{canonical $\mathcal{D}$-Stein class} $\mathcal{F}(p)
  \equiv \mathcal{F}(X) (= \mathcal{F}_{\mu}(p))$ for $X$ is the collection
  of functions   $f \in L^1_{\mu}(p)$ such that (i) $fp \in
 dom(\mathcal{D})$, (ii) $\mathcal{D}(fp) \in L^1(\mu)$ and (iii) $  \int_{\mathcal{I}} \mathcal{D}(fp) d\mu = 0. $ 
  The \emph{canonical $\mathcal{D}$-Stein operator} $\cT_p
 \equiv \cT_X$ for $p$ is the linear operator
  on $\mathcal{F}(X)$ defined as
 \begin{equation}
    \label{eq:3}
    \cT_X f : \mathcal{F}(X) \to L^1_{\mu}(p) : f \mapsto \frac{\cD (fp) }{p},
  \end{equation}
with the convention that  $\cT_X f = 0$ outside of $\III$. We call the
construction $(\mathcal{T}_X,\mathcal{F}(X)) = (\mathcal{T}_p,\mathcal{F}(p))$ a $\mathcal{D}$-Stein
pair for $X$. 
\end{definition}

\begin{remark}
  In  the sequel we shall generally drop the reference to the
  dominating differential $\mathcal{D}$. 
\end{remark}

To avoid triviality we from hereon assume that $\mathcal{F}(X)
\setminus \left\{ 0 \right\}\neq \emptyset$.  Note that
$\mathcal{F}(X)$ is closed under multiplication by constants.
By definition,  $\mathcal{T}_Xf \in L^1_{\mu}(p)$ for all
$f\in\mathcal{F}(X)$, and 
$$
\E[\mathcal{T}_Xf(X)] = \int_\III  \frac{\cD (fp)(x) }{p(x) } p(x) d\mu(x) = \int_\III \cD (fp)(x) d\mu(x) =0,
$$
so that $\mathcal{T}_X$ satisfies {E}quation \eqref{steingen}, 
qualifying it as a Stein operator.

\begin{remark}\label{rem:definop}
The assumption for $\cF(X)$ that $  \int_{\mathcal{I}} \mathcal{D}(fp)
d\mu = 0 $ is made for convenience of calculation but it is not
essential. Indeed sometimes it may be more natural not to impose this
restriction. For example if $\mu$ is the continuous uniform measure on
$[0,1]$ and $p=1$, with $\cD$ the usual derivative,  then imposing
that $\int_0^1 f'(x) dx=f(1) - f(0) = 0$ may not be natural. The price
to pay for relaxing the assumption is that in the definition of
$\cT_X f(X)$ we  would have to subtract this integral, as in
\cite{Stein2004},  to assure that
$\E[\mathcal{T}_Xf(X)] =0$.
\end{remark}

The canonical Stein operator \eqref{eq:3} bears an intuitive
interpretation in terms of the linear operator $\mathcal{D}$.
\begin{proposition}\label{prop:skewadjoint}
For all $f \in \mathcal{F}(X)$  define  the class of functions
\begin{align} \label{dom1}
dom(\cD, X,f) &= \left\{ g \in dom (\cD^{\star}): g(\cdot+l)f(\cdot) \in
  \mathcal{F}(X),  \right. \nonumber \\
& \quad \quad \left.  \EE | f(X) \cD^{\star} (g)(X) | <
  \infty \mbox{ or } \EE | g(X) \mathcal{T}_Xf(X) | < \infty \right\}.
\end{align}
Then 
\begin{equation}\label{eq:basicstein} 
\EE \left[ f(X)\cD^{\star} (g)(X)\right] = -\EE \left[ g(X) \mathcal{T}_Xf(X) \right]
\end{equation}
for all $f \in   \mathcal{F}(X) $  and all $g \in dom(\cD, X, f)$.
\end{proposition}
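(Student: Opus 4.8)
The plan is to obtain \eqref{eq:basicstein} by applying the product rule of Assumption~\ref{ass:prodrule} not to $f$ directly but to the product $fp$, and then integrating against $\mu$, using the defining condition (iii) of the canonical Stein class to annihilate the resulting ``boundary'' term. Morally this is just the skew-adjointness relation \eqref{eq:4} with $fp$ in the role of $f$.

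Concretely, fix $f \in \mathcal{F}(X)$ and $g \in dom(\cD, X, f)$. Since $f \in \mathcal{F}(X)$ we have $fp \in dom(\cD)$, and since $g \in dom(\cD^{\star})$ Assumption~\ref{ass:prodrule} applies to the pair $(fp, g)$ and gives the pointwise identity
\[
  \cD\big( (fp)(x)\, g(x+l) \big) = g(x)\, \cD(fp)(x) + (fp)(x)\, \cD^{\star}g(x), \qquad x \in \cX .
\]
I would then integrate this over $\III$ with respect to $\mu$. The left-hand side equals $\cD\big( (g(\cdot+l)f(\cdot))\,p \big)$ pointwise, and since $g(\cdot+l)f(\cdot) \in \mathcal{F}(X)$ by the definition \eqref{dom1} of $dom(\cD, X, f)$, part (iii) of Definition~\ref{def:canonev} gives $\int_{\III} \cD\big( (g(\cdot+l)f(\cdot))p \big)\, d\mu = 0$. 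On the right-hand side I would rewrite $g\,\cD(fp) = g\,(\mathcal{T}_X f)\,p$ and $(fp)\,\cD^{\star}g = \big(f\,\cD^{\star}g\big)\,p$, so that the two terms integrate (against $\mu$ over $\III$) to $\EE[g(X)\mathcal{T}_X f(X)]$ and $\EE[f(X)\cD^{\star}g(X)]$ respectively; rearranging $0 = \EE[g(X)\mathcal{T}_X f(X)] + \EE[f(X)\cD^{\star}g(X)]$ then yields \eqref{eq:basicstein}.

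The one point requiring care — and the main, rather modest, obstacle — is the integrability bookkeeping needed to split the integral of the sum into the sum of the integrals. Here I would observe that $\cD\big( (g(\cdot+l)f(\cdot))p \big) \in L^1(\mu)$ by part (ii) of Definition~\ref{def:canonev}, while the definition \eqref{dom1} of $dom(\cD, X, f)$ guarantees that at least one of $g\,(\mathcal{T}_X f)\,p$ and $\big(f\,\cD^{\star}g\big)\,p$ lies in $L^1(\mu)$; since these two functions differ by an $L^1(\mu)$ function, both are integrable, the split is legitimate, and as a byproduct the ``or'' in \eqref{dom1} is automatically strengthened to an ``and''. A secondary subtlety is the usual one flagged after \eqref{eq:36}: one must keep track of the implicit indicator of $\III$ hidden in $p$ (and hence in $\cD(fp)$ near the endpoints $a,b$) and integrate consistently over $\III$ rather than $\cX$ — condition (iii) of Definition~\ref{def:canonev} being precisely what encodes the vanishing of the classical ``boundary terms'' in this generality.
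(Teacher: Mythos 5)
Your proposal is correct and is essentially the paper's own argument: apply the product rule of Assumption~\ref{ass:prodrule} to the pair $(fp,g)$, integrate over $\III$ against $\mu$, and use that $g(\cdot+l)f(\cdot)\in\mathcal{F}(X)$ together with part~(iii) of Definition~\ref{def:canonev} to kill the left-hand side. The only difference is that you spell out the integrability bookkeeping (one summand being integrable forces the other to be, since their sum is in $L^1(\mu)$ by part~(ii)), which the paper compresses into the phrase ``with both integrals being finite.''
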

\begin{proof}
Assumption \ref{ass:prodrule} assures us that 
\begin{equation*}
  \mathcal{D}(g(\cdot +  l)f(\cdot)p(\cdot))(x) = g(x) \mathcal{D}(fp)(x) + f(x) p(x) \mathcal{D}^{\star}g(x)
\end{equation*}
for all   $f \in \mathcal{F}(X)$ and all $g \in dom(\cD^{\star})$. If moreover $g \in
dom(\cD, X, f)$  then $\int_{\mathcal{X}}\mathcal{D}(g(x +
l)f(x)p(x)) d\mu(x) = 0$ and 
\begin{align*}
\E \left[  g(X)  \frac{\cD (fp) }{p} (X) \right]  &= \int_{\III} g \cD(fp) d\mu \\
&= - \int_{\III}fp  \cD^{\star}(g) d \mu \\
&=- \E[  f(X)\cD^{\star}(g)(X)],
  \end{align*}
with both integrals being finite. This yields  \eqref{eq:basicstein}.  
\end{proof}


As anticipated in the Introduction, relationship  \eqref{eq:basicstein} shows that if $\mathcal{D}$ is
skew-adjoint with respect to $\mathcal{D}^{\star}$ under integration
in $\mu$  then the canonical Stein operator is skew-adjoint to
$\mathcal{D}^{\star}$ under integration in the measure $P_X$. This
motivates  the use of the terminology  ``canonical'' in Definition \ref{def:canonev}; we will further
elaborate on 
this topic in Section \ref{sec:stein-char}.

\begin{example}[Example \ref{ex:cpont}, continued] \label{ex:cpontCONT}
 Let $X$ be a random variable with absolutely continuous density
  $p$ with support $\mathcal{I} = \left\{ a, b \right\}$.  Then $\mathcal{F}(X)$ is the collection of
  functions $f:\R\to\R$ such that $fp\in W^{1,1}$ the Sobolev space of
  order 1 on $L^1(dx)$ and 
$   \lim_{x\searrow  a}f(x)p(x)=\lim_{x\nearrow b}f(x)p(x);$ the canonical Stein operator is 
\begin{equation*}
  \mathcal{T}_Xf = \frac{(fp)'}{p}
  \end{equation*}
which we set to 0 outside of $\III$. 
Also,  for
    $f \in \mathcal{F}(X)$, $dom( \left( \cdot \right)', X, f)$ is the
    class of  differentiable functions $g
:\R\to\R$  such that $\int  \left( gfp \right)' dx =0$, $\int | g'fp | dx
  <\infty $ or $\int | g(fp)' | dx <\infty.$
(Note that the first requirement implicitly requires $\int |\left( gfp
\right)'| dx <\infty$.)  In particular all constant functions  are in
$dom(\left( \cdot \right)', X, f)$.  

In the case that $p$ itself is differentiable  (and not only the function $x \mapsto
 f(x)p(x)$ is) we can write 
\begin{equation}
  \label{eq:11}
  \mathcal{T}_Xf (x) = \left(  f'(x) + f(x) \frac{p'(x)}{p(x)} \right)
\mathbb{I}(x \in \III),
\end{equation}
with $\mathbb{I}(\cdot)$ the usual indicator function. This is
operator \eqref{eq:36} from Stein's density approach. 
Note that, in many cases, the constant functions may not belong to
$\mathcal{F}(X)$. 
Operator \eqref{eq:11} was also discussed (under
slightly different -- more restrictive -- assumptions) in
\cite{CS11}. See also \cite{LS12a} for a similar
construction. 
\end{example}
\begin{example}[Example \ref{ex:discm}, continued]  \label{ex:discmCONT}
  Recall $\cD = \Delta^+$ and consider $X$ some discrete random
  variable whose density $p$ has interval support $ \mathcal{I} = 
[a, b]$ (with, for simplicity,
  $a>-\infty$). The {associated}  (forward) Stein operator is
  \begin{equation*}
    \mathcal{T}_Xf = \frac{\Delta^+(fp)}{p},
  \end{equation*}
which we set to 0 outside of $\III$. 
We divide the example
  in two parts. 
  \begin{enumerate}
  \item If $b<+\infty$ :   the  {associated} (forward) canonical Stein class 
    $\mathcal{F}(X)$ is the collection of functions $f :\Z \to \R$
    such that $f(a) = 0$, and,  for
    $f \in \mathcal{F}(X)$, $dom(\Delta^+, X, f)$ is the
    collection of functions $g : \Z \to \R$.
\item If $b = +\infty$ : the (forward) canonical Stein class 
    $\mathcal{F}(X)$ is the collection of  functions $f:\Z\to\R$
    such that $f(a) = 0$ and $\sum_{n=a}^{\infty} |f(n)|p(n)<+\infty$, and  for
    $f \in \mathcal{F}(X)$, $dom(\Delta^+, X, f)$ is the
    collection of  functions $g : \Z \to \R$ such that $\lim_{n\to
      \infty} g(n-1) f(n) p(n)= 0$ and, either  $\sum_{k=a}^{\infty} p(k) \left|
    f(k)   \Delta^+g(k)\right| < \infty $ or $ \sum_{k=a}^{\infty} p(k) \left|
      g(k) \mathcal{T}_Xf(k) \right|<\infty.$
In particular all bounded functions $g$ are in $dom(\Delta^+, X, f)$. 
\end{enumerate} 
If $p$ itself is in $\mathcal{F}(X)$ then we have
\begin{equation*}
  \mathcal{T}_Xf(x) = f(x+1) \frac{p(x+1)}{p(x)} - f(x). 
\end{equation*}
Similarly it is straightforward to
define a backward Stein class and operator.
\end{example}

\begin{example}[Example \ref{N01}, continued] 
\label{exN01cont} Let $X$ be a random
  variable with 
  density $p$ with support $\III=\left\{ a, b \right\}$ with respect to $\varphi(x)dx$ the Gaussian
  measure. Recall $\mathcal{D}_{\varphi}f(x) = f'(x) - xf(x)$ and
  $\mathcal{D}^{\star}g(x) = g'(x)$. 
Then $\mathcal{F}(X)$ is the collection of
  functions $f:\R\to\R$ such that $fp \in L^1(\varphi)$ is absolutely
  continuous, 
  $\int_{\R}| \mathcal{D}_{\varphi}(fp)|\varphi(x)dx < \infty $  and 
$   \lim_{x\searrow  a}f(x)p(x)\varphi(x)=\lim_{x\nearrow
  b}f(x)p(x)\varphi(x);$
the canonical Stein operator is 
\begin{equation*}
  \mathcal{T}_Xf = \frac{\mathcal{D}_{\varphi}(fp)}{p} = \frac{(fp\varphi)'}{p\varphi}
  \end{equation*} 
which we set to 0 outside of $\III$. 
Also,  for
    $f \in \mathcal{F}(X)$, $dom(\mathcal{D}_{\varphi}, X, f)$
    contains all differentiable  functions $g
:\R\to\R$  such that $gf \in L^1_{\mu}(p)$ (or, equivalently, $gfp \in
L^1(\varphi)$), $\int  \left( gfp \varphi \right)' dx =0$, and
either $\int | g'f |
  p\varphi dx
  <\infty $ or $\int | g(fp \varphi)' | dx <\infty$.
In particular all constant functions  are in
$dom(\mathcal{D}_{\varphi},  X, f)$.  
 {The}  above construction {can also be obtained} directly  by
replacing $p$ with $p \varphi$ in  Example \ref{ex:cpontCONT}.
\end{example}

\begin{example}[Example \ref{PoiL}, continued] \label{exPoiLcont}
  Recall $\mathcal{D} = \Delta^+_{\lambda}$ and consider $X$ some
  discrete random variable whose density $p$ has interval support $
  \mathcal{I} = [0, b]$. The
  (forward) Stein operator is
  \begin{equation*}
    \mathcal{T}_Xf = \frac{\Delta^+_{\lambda}(fp)}{p} =
      \frac{\Delta^+(f(x) x p(x) \gamma_{\lambda}(x))}{p(x) \gamma_{\lambda}(x)}, 
  \end{equation*}
which we set to 0 outside of $\III$. Then, as in the previous example,
we simply recover the construction of Example \ref{ex:discmCONT} with $f(x)$
replaced by $xf(x)$ (and thus no condition on $f(0)$) and $p(x)$
replaced by $p(x) \gamma_{\lambda}(x)$. 
\end{example}

\begin{remark}
  As noted already in the classic paper \cite{DZ91}, the abstract
  theory of Stein operators is closely connected to Sturm-Liouville
  theory. This connection is quite easy to see from our notations and
  framework; it remains however outside of the scope of the present
  paper and will be explored in future publications.
\end{remark}

\subsection{The canonical inverse Stein
  operator}\label{sec:canon-inverse-stein}

The Stein operator being defined (in terms of $\mathcal{D}$), we now
define its inverse (in terms of $\mathcal{D}^{-1}$). To this end first
note that if $\mathcal{D}(fp) = hp$ for $f \in \mathcal{F}(X)$ then
$\mathcal{T}_X(f) = h$. As $\mathcal{D}(fp+\chi) = hp$  for any $\chi \in
ker(\mathcal{D})$, to define a unique right-inverse of $\mathcal{T}_X$
we make  the following assumption.
\begin{assumption}\label{ass:ker}
  $ker(\mathcal{D}) \cap L^1(\mu) = \left\{ 0 \right\}$. 
\end{assumption}

 This assumption ensures that the only $\mu$-integrable $\chi$ is 0 and
thus $\mathcal{T}_X$  (as an operator acting on $\mathcal{F}(X)\subset
L^1(\mu)$) possesses a \emph{bona fide} inverse, and also that
$ker(\mathcal{D}) \cap L^1_{\mu}(p) = \left\{ 0 \right\}$.

\begin{definition}\label{def:invstop}
Let $X$ have density $p$ with support $\III$.  
The \emph{canonical   inverse Stein operator}
$ \cT_p^{-1} \equiv  \cT_X^{-1}$ for $X$ is defined for all $h$ such
that $hp \in im(\mathcal{D})$ as the unique function $f \in
\mathcal{F}(X)$ such that $\mathcal{D}(fp) = hp$. 
\end{definition}
We will use the shorthand 
\begin{equation*}
  \cT_X^{-1} h  = \frac{ {\cD}^{-1} (h p)}{p}
\end{equation*} 
with the convention that $\cT_X^{-1} h = 0$ outside of $\III$.

 We state the counterpart of Proposition 
 \ref{prop:skewadjoint} for the
inverse Stein operator. 
\begin{proposition}\label{prop:invsteiskew}
 Define the class of functions
 \begin{equation*}
 \mathcal{F}^{(0)}(X)  = \{ h \in
 im(\mathcal{T}_X) : hp = \mathcal{D}(fp)  \mbox{ with } f \in
 \mathcal{F}(X) \}.  
 \end{equation*}
 Then 
 \begin{equation}\label{eq:8}
   \EE \left[\mathcal{T}_X^{-1}h (X)  \mathcal{D}^{\star}g (X)
   \right] = - \EE \left[ g (X)  h (X)  \right] 
 \end{equation}
 for all  $h \in
 \mathcal{F}^{(0)}(X)$ and all  $g \in
 dom(\mathcal{D}, X,  \mathcal{T}_X^{-1}h )$.  
  \end{proposition}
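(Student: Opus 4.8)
The plan is to reduce Proposition~\ref{prop:invsteiskew} to Proposition~\ref{prop:skewadjoint}, which has already been proved, by an appropriate change of variables. First I would take an arbitrary $h \in \mathcal{F}^{(0)}(X)$ and, using the definition of $\mathcal{F}^{(0)}(X)$, produce the unique $f = \mathcal{T}_X^{-1}h \in \mathcal{F}(X)$ with $hp = \mathcal{D}(fp)$; by Assumption~\ref{ass:ker} this $f$ is well-defined, and by Definition~\ref{def:invstop} we have the shorthand $f = \mathcal{D}^{-1}(hp)/p$. Then, directly from the defining relation $hp = \mathcal{D}(fp) = p\,\mathcal{T}_Xf$, we get $h = \mathcal{T}_Xf$ pointwise on $\III$, and both sides vanish off $\III$ by convention.

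Next I would substitute this identification into \eqref{eq:basicstein}. For any $g \in dom(\mathcal{D}, X, \mathcal{T}_X^{-1}h) = dom(\mathcal{D}, X, f)$, Proposition~\ref{prop:skewadjoint} gives
\[
\EE\left[ f(X)\,\mathcal{D}^{\star}(g)(X) \right] = -\,\EE\left[ g(X)\,\mathcal{T}_Xf(X) \right].
\]
Replacing $f$ by $\mathcal{T}_X^{-1}h$ on the left and $\mathcal{T}_Xf$ by $h$ on the right yields exactly \eqref{eq:8}. I should check that the class of admissible $g$ matches: the statement asks for $g \in dom(\mathcal{D}, X, \mathcal{T}_X^{-1}h)$, which by definition is $dom(\mathcal{D}, X, f)$ with $f = \mathcal{T}_X^{-1}h$, so the hypothesis is precisely what Proposition~\ref{prop:skewadjoint} requires — no extra integrability conditions are needed beyond those already encoded in \eqref{dom1} (namely $g(\cdot+l)f(\cdot) \in \mathcal{F}(X)$ and one of the two finiteness conditions, which in terms of $h$ reads $\EE|f(X)\mathcal{D}^{\star}g(X)| < \infty$ or $\EE|g(X)h(X)| < \infty$).

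The main (minor) obstacle is bookkeeping around Assumption~\ref{ass:ker}: one must be slightly careful that $\mathcal{T}_X^{-1}h$ is genuinely well-defined as an element of $\mathcal{F}(X)$ — that is, that the $f$ with $\mathcal{D}(fp) = hp$ is unique in $\mathcal{F}(X) \subset L^1(\mu)$. This is exactly what Assumption~\ref{ass:ker} guarantees: any two candidates differ by a $\chi \in ker(\mathcal{D})$, and since $fp \in L^1(\mu)$ for $f \in \mathcal{F}(X)$, we have $\chi \in ker(\mathcal{D}) \cap L^1(\mu) = \{0\}$. Once this uniqueness is in hand, the proof is essentially a one-line consequence of Proposition~\ref{prop:skewadjoint} applied to $f = \mathcal{T}_X^{-1}h$, together with the observation that $h = \mathcal{T}_X f$. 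I would write the argument in three or four lines, invoking \eqref{eq:basicstein} explicitly and remarking that the domain condition transcribes verbatim.
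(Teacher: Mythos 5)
Your proposal is correct and is exactly what the paper has in mind: the paper presents Proposition~\ref{prop:invsteiskew} as the immediate "counterpart" of Proposition~\ref{prop:skewadjoint} and gives no separate proof, precisely because it is the substitution $f = \mathcal{T}_X^{-1}h$, $\mathcal{T}_Xf = h$ in \eqref{eq:basicstein} that you carry out. Your remark about Assumption~\ref{ass:ker} guaranteeing well-definedness of $\mathcal{T}_X^{-1}h$ is the right point to flag, and the domain identification $dom(\mathcal{D}, X, \mathcal{T}_X^{-1}h) = dom(\mathcal{D}, X, f)$ is verbatim.
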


\begin{example}[Example \ref{ex:cpont},
  continued] \label{ex:cpontcontcont}
 Let $X$ have
  support $\III = \left\{ a, b \right\}$ with Stein class
  $\mathcal{F}(X)$ and Stein operator $\mathcal{T}_X(f) = (fp)'/p$.
  Then 
\begin{equation*}
  \mathcal{T}_X^{-1}h(x) = \frac{1}{p(x)}\int_{a}^x h(u) p(u) du = -
  \frac{1}{p(x)} \int_x^b h(u) p(u) du
\end{equation*}
for all $h\in \mathcal{F}^{(0)}(X)$ the collection of  functions $h\in
L^1_{\mu}(p)$ such that $\EE_ph=0$.
\end{example}

\begin{example}[Example \ref{ex:discm},
  continued]\label{ex:discmcontcont}  
   Let $X$ have
  support $\III = [ a, b ]$  with Stein class
  $\mathcal{F}(X)$ and Stein operator $\mathcal{T}_X(f) = \Delta^+(fp)/p$.
  Then
\begin{equation*}
  \mathcal{T}_X^{-1}h(x) = \frac{1}{p(x)}\sum_{k=a}^x h(k) p(k) = -
  \frac{1}{p(x)} \sum_{k=x+1}^b h(k) p(k)
\end{equation*}
for all $h\in \mathcal{F}^{(0)}(X)$  the collection of functions $h$
  such that $\EE_ph=0$. 
\end{example}
The inverse operator and corresponding sets in Example~\ref{N01} (resp., 
Example~\ref{PoiL}) are simply obtained by replacing $p$ with
$\varphi p$ (resp., with $\gamma_{\lambda} p$) in
Example~\ref{ex:cpontcontcont} (resp., in Example
\ref{ex:discmcontcont}).

\subsection{Stein  differentiation and the product rule}
\label{sec:stein-oper-stein}

 Define the new class of functions
\begin{equation*}
  dom(\cD, X):=\bigcap_{f\in\mathcal{F}(X)}dom(\cD,X,f)
\end{equation*}
with $dom(\cD,X,f)$ as in \eqref{dom1}. Then the following holds. 
\begin{lemma}\label{lem:constantsd}
  If the constant function $1$ belongs to $dom (\cD) \cap dom
  (\cD^{\star})$, then all constant functions are in
  $ker(\mathcal{D}^{\star})$ and in $dom(\mathcal{D}, X)$.
\end{lemma}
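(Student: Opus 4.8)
The plan is to first show that $\mathcal{D}^{\star}$ annihilates every constant function, and then to check, one by one, the three membership conditions in the definition \eqref{dom1} of $dom(\cD,X,f)$ for a constant $g$.

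First I would apply the product rule of Assumption \ref{ass:prodrule}, namely \eqref{eq:2}, with the choice $f=g=1$; this is legitimate precisely because we are assuming $1\in dom(\mathcal{D})\cap dom(\mathcal{D}^{\star})$. The identity \eqref{eq:2} then reads $\mathcal{D}(1)(x)=1\cdot\mathcal{D}(1)(x)+1\cdot\mathcal{D}^{\star}(1)(x)$ for all $x\in\mathcal{X}$, which forces $\mathcal{D}^{\star}(1)\equiv 0$, i.e.\ $1\in ker(\mathcal{D}^{\star})$. Since $\mathcal{D}^{\star}$ is a linear operator its domain is a linear subspace of $\mathcal{X}^{\star}$, so every constant function $c=c\cdot 1$ lies in $dom(\mathcal{D}^{\star})$ and, by linearity, $\mathcal{D}^{\star}(c)=c\,\mathcal{D}^{\star}(1)=0$. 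Hence all constant functions belong to $ker(\mathcal{D}^{\star})$.

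Next I would fix an arbitrary $f\in\mathcal{F}(X)$ and a constant function $g\equiv c$, and verify the three requirements in \eqref{dom1}. Membership $g\in dom(\mathcal{D}^{\star})$ is exactly what was just established. For the second requirement, note that $g(\cdot+l)f(\cdot)=c\,f(\cdot)$, and since $\mathcal{F}(X)$ is closed under multiplication by constants we get $cf\in\mathcal{F}(X)$. For the third (disjunctive) requirement, the first alternative holds trivially: by the first step $\mathcal{D}^{\star}(g)=\mathcal{D}^{\star}(c)=0$, so $f(X)\mathcal{D}^{\star}(g)(X)=0$ and $\EE|f(X)\mathcal{D}^{\star}(g)(X)|=0<\infty$. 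Therefore $g\in dom(\mathcal{D},X,f)$; as $f\in\mathcal{F}(X)$ was arbitrary, $g\in dom(\mathcal{D},X)=\bigcap_{f\in\mathcal{F}(X)}dom(\mathcal{D},X,f)$, which completes the argument.

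I do not anticipate a genuine obstacle here: the whole proof collapses once the product rule is evaluated at $f=g=1$. The only two points requiring a moment's care are invoking the linearity of $\mathcal{D}^{\star}$ to pass from $\mathcal{D}^{\star}1=0$ to $\mathcal{D}^{\star}c=0$ for all constants $c$, and recalling the already-noted fact that $\mathcal{F}(X)$ is stable under scalar multiplication, which is what makes the second condition in \eqref{dom1} automatic.
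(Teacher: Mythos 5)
Your proof is correct and follows essentially the same route as the paper: both arguments evaluate the product rule \eqref{eq:2} at constant functions to deduce $\mathcal{D}^{\star}1=0$, and then conclude membership in $dom(\mathcal{D},X)$. You simply spell out, more explicitly than the paper's ``follows immediately'', the linearity step and the verification of the three conditions in \eqref{dom1}.
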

\begin{proof}
 Taking $g\equiv 1$
 in \eqref{eq:2} we see that 
$  \mathcal{D}f(x) = \mathcal{D}f(x) + f(x) \mathcal{D}^{\star} 1(x)$
for all $f \in dom(\mathcal{D})$. Taking $f \equiv 1$ ensures the first
claim.  The second claim then follows immediately. 
\end{proof}

From here onwards we make the following assumption. 

\begin{assumption}\label{ass:1ind}
  $1 \in dom(\mathcal{D})\cap dom(\mathcal{D}^{\star})$.
\end{assumption}

Starting from the product rule \eqref{eq:2} we also obtain the
following  differentiation rules for  $\mathcal{D}$ and
$\mathcal{D}^{\star}$. 

\begin{lemma}
  \label{lem:DDDiff}
Under  Assumptions \ref{ass:prodrule}  and \ref{ass:1ind} we have 
\begin{enumerate}
\item \label{vv1} $\mathcal{D}g(\cdot + l) = g \mathcal{D}1 + \mathcal{D}^{\star}g$
\item \label{vv2} $\mathcal{D}^{\star}(fg) = g \mathcal{D}^{\star}f
+ f( \cdot + l) \mathcal{D}^{\star} g$
\end{enumerate}
for all $f, g \in dom(\mathcal{D}) \cap dom(\mathcal{D}^{\star})$. 
\end{lemma}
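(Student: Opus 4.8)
The plan is to derive both differentiation rules purely formally from the single product rule \eqref{eq:2} of Assumption \ref{ass:prodrule}, applied to suitably chosen pairs of functions, using Assumption \ref{ass:1ind} only to guarantee that $1 \in dom(\mathcal{D}) \cap dom(\mathcal{D}^{\star})$, so that the pair $(1,g)$ is admissible in \eqref{eq:2}.

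For item \ref{vv1}, I would set $f \equiv 1$ in \eqref{eq:2}: the left-hand side becomes $\mathcal{D}(g(\cdot+l))$ and the right-hand side becomes $g\,\mathcal{D}1 + \mathcal{D}^{\star}g$, which is exactly the claim. A by-product that will be invoked twice below is that \eqref{eq:2} applied to $(1,g)$ already certifies $g(\cdot+l) \in dom(\mathcal{D})$ whenever $g \in dom(\mathcal{D}^{\star})$.

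For item \ref{vv2}, the idea is to trade $\mathcal{D}^{\star}$ for $\mathcal{D}$ using item \ref{vv1}: by \ref{vv1}, $\mathcal{D}^{\star}h = \mathcal{D}(h(\cdot+l)) - h\,\mathcal{D}1$ for every $h \in dom(\mathcal{D}^{\star})$, so taking $h = fg$ reduces the task to computing $\mathcal{D}\big(f(\cdot+l)\,g(\cdot+l)\big)$. I would then apply \eqref{eq:2} to the pair $\big(f(\cdot+l),\,g\big)$, which is legitimate since $f(\cdot+l) \in dom(\mathcal{D})$ by the by-product above and $g \in dom(\mathcal{D}^{\star})$ by hypothesis, obtaining $\mathcal{D}\big(f(\cdot+l)g(\cdot+l)\big)(x) = g(x)\,\mathcal{D}(f(\cdot+l))(x) + f(x+l)\,\mathcal{D}^{\star}g(x)$. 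Expanding $\mathcal{D}(f(\cdot+l)) = f\,\mathcal{D}1 + \mathcal{D}^{\star}f$ by item \ref{vv1} once more and substituting back into $\mathcal{D}^{\star}(fg) = \mathcal{D}\big(f(\cdot+l)g(\cdot+l)\big) - fg\,\mathcal{D}1$, the two terms carrying $\mathcal{D}1$ cancel and one is left with $g\,\mathcal{D}^{\star}f + f(\cdot+l)\,\mathcal{D}^{\star}g$, which is the claim.

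The computation is short and elementary; what genuinely needs care is the bookkeeping of the shift by $l$---ensuring that the function placed in the second argument slot of \eqref{eq:2} is the correctly shifted one at each application---together with the domain memberships, so that every invocation of \eqref{eq:2} and of item \ref{vv1} is licit (in particular, item \ref{vv2} should be read as also asserting $fg \in dom(\mathcal{D}^{\star})$, which the argument establishes en route). I do not expect any analytic difficulty.
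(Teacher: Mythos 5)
Your proof is correct and follows essentially the same route as the paper: item \ref{vv1} by putting $f\equiv 1$ in \eqref{eq:2}, then for item \ref{vv2} rewriting $\mathcal{D}^{\star}(fg)=\mathcal{D}\bigl(f(\cdot+l)g(\cdot+l)\bigr)-fg\,\mathcal{D}1$, expanding the middle term via \eqref{eq:2} applied to $\bigl(f(\cdot+l),g\bigr)$, and then using item \ref{vv1} once more to cancel the $\mathcal{D}1$ contributions. The only difference is that you spell out the domain bookkeeping (that $g(\cdot+l)\in dom(\mathcal{D})$ whenever $g\in dom(\mathcal{D}^{\star})$, and that $fg\in dom(\mathcal{D}^{\star})$ is part of the assertion), which the paper leaves implicit.
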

\begin{proof}
 Claim \ref{vv1}. is immediate. To see \ref{vv2}.,  using  Assumption \ref{ass:prodrule} we write 
  \begin{align*}
    \mathcal{D}^{\star}(f g) & = - fg
    \mathcal{D}1 + 
    \mathcal{D}(f(\cdot+l)g(\cdot+l))  \\
    & = - f g \mathcal{D}1 + g \mathcal{D}f(\cdot+l) + f(\cdot+l) \mathcal{D}^{\star}g.
  \end{align*}
Applying Claim \ref{vv1}. to the second summand we then get 
\begin{align*}
    \mathcal{D}^{\star}(fg) & =  - fg \mathcal{D}1 + fg \mathcal{D}1
    + g \mathcal{D}^{\star}f
+ f( \cdot + l) \mathcal{D}^{\star} g \\
& = g \mathcal{D}^{\star}f
+ f( \cdot + l) \mathcal{D}^{\star} g.
\end{align*}
\end{proof}
\begin{remark}
  From Point \ref{vv1}. in Lemma \ref{lem:DDDiff} we see that if   $l=0$ and $1 \in
ker(\mathcal{D})$  then  $\mathcal{D} =
\mathcal{D}^{\star}$ on $dom(\cD) \cap dom(\cD^*)$. Neither of these assumptions are always
satisfied (see Examples \ref{ex:discm} and \ref{N01}).
\end{remark}


The following result is the basis of {what we call}
``Stein
differentiation''. It is also the key to the standardizations leading
to the different Stein operators that will be discussed in Section 
 \ref{sec:examples}. 

\begin{theorem}[Stein product rule]
  \label{sec:steindiff}
The Stein triple  $(\mathcal{T}_X, \mathcal{F}(X), dom(\mathcal{D}, X,
\cdot))$ satisfies  the product rule
\begin{equation} \label{steindiff}
 f(x)  \cD^{\star} (g)(x)  + g(x) \mathcal{T}_Xf(x) = \mathcal{T}_X (f(\cdot) g(\cdot + l) )(x)
\end{equation}
for $f \in \mathcal{F}(X)$ and $g \in dom(\mathcal{D}, X, f)$.
\end{theorem}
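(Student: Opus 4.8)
The plan is to derive the identity \eqref{steindiff} directly from the product rule in Assumption~\ref{ass:prodrule} applied to the functions $f$ and $gp$ (or rather, to $fp$ and $g$), dividing through by $p$ on the support $\III$. First I would start from \eqref{eq:2}, which in the form we need reads
\begin{equation*}
  \mathcal{D}\bigl(f(x) g(x+l)\, p(x)\bigr) = \mathcal{D}\bigl((fp)(x)\, g(x+l)\bigr)
  = g(x)\, \mathcal{D}(fp)(x) + (fp)(x)\, \mathcal{D}^{\star}g(x),
\end{equation*}
where I have grouped $fp$ as a single function in $dom(\mathcal{D})$; this is legitimate because $f \in \mathcal{F}(X)$ forces $fp \in dom(\mathcal{D})$, and $g \in dom(\mathcal{D}, X, f) \subset dom(\mathcal{D}^{\star})$. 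Note that here I am applying \eqref{eq:2} with the roles of the two functions filled by $fp$ and $g$, so the output is $g\cdot\mathcal{D}(fp) + (fp)\cdot\mathcal{D}^{\star}g$, which is exactly the right grouping.

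Next I would divide both sides by $p(x)$ for $x \in \III$ (where $p(x)>0$). The left-hand side becomes $\mathcal{D}(f(\cdot)g(\cdot+l)p(\cdot))(x)/p(x)$, which is precisely $\mathcal{T}_X\bigl(f(\cdot)g(\cdot+l)\bigr)(x)$ by the definition \eqref{eq:3} of the canonical Stein operator — provided $f(\cdot)g(\cdot+l) \in \mathcal{F}(X)$, which is built into the definition \eqref{dom1} of $dom(\mathcal{D}, X, f)$. The right-hand side becomes $g(x)\,\mathcal{D}(fp)(x)/p(x) + f(x)\,\mathcal{D}^{\star}g(x) = g(x)\,\mathcal{T}_Xf(x) + f(x)\,\mathcal{D}^{\star}g(x)$, again by \eqref{eq:3}. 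This gives \eqref{steindiff} on $\III$. Outside $\III$ both $\mathcal{T}_Xf$ and $\mathcal{T}_X(f(\cdot)g(\cdot+l))$ are set to $0$ by convention, so one should briefly check that the remaining term $f(x)\mathcal{D}^{\star}g(x)$ is also handled consistently — but since the identity is an identity of functions on $\mathcal{X}$ and the Stein operator is defined to vanish off $\III$, the natural reading is that \eqref{steindiff} is asserted on $\III$ (or one simply notes all Stein-operator terms vanish off $\III$).

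The main obstacle — really the only thing requiring care — is bookkeeping about which function is "$f$" and which is "$g$" in the abstract product rule \eqref{eq:2}, since the shift by $l$ appears asymmetrically: \eqref{eq:2} pairs $f(x)$ with $g(x+l)$ and outputs $g\,\mathcal{D}f + f\,\mathcal{D}^{\star}g$. To get $\mathcal{T}_X$ on the left in \eqref{steindiff} one wants $fp$ to play the role of the "$\mathcal{D}$-differentiated" factor and $g$ the role of the "$\mathcal{D}^{\star}$-differentiated" factor; tracking this correctly is the whole proof. There are no analytic subtleties (integrability, boundary terms) here because \eqref{steindiff} is a pointwise algebraic identity, not an integrated one — the integrability conditions in $dom(\mathcal{D}, X, f)$ are needed only for the skew-adjointness statement \eqref{eq:basicstein}, not for the product rule itself. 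So the proof is essentially two lines: apply \eqref{eq:2} to $fp$ and $g$, then divide by $p$ on $\III$ and read off the three terms using \eqref{eq:3}.
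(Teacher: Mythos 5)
Your proof is correct and takes essentially the same approach as the paper: both rest on a single application of Assumption~\ref{ass:prodrule} with $fp$ playing the $\mathcal{D}$-differentiated role and $g$ the $\mathcal{D}^{\star}$-differentiated role, followed by division by $p$ on $\mathcal{I}$. The paper simply runs the computation starting from the left side of \eqref{steindiff} rather than from \eqref{eq:2}, but it is the same one-line argument.
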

\begin{proof}
Use  Assumption \ref{ass:prodrule} to deduce 
\begin{eqnarray*}
 f(x)  \cD^{\star} (g)(x)  + g(x) \mathcal{T}_Xf(x) &=&  f(x)  \cD^{\star} (g)(x)  + g(x)  \frac{ \cD( f p)(x)}{p(x)} \nonumber \\
&=& \frac{1}{p(x)} \cD(f(\cdot) p(\cdot) g(\cdot + l) )(x),
\end{eqnarray*}
which is the claim.
\end{proof}
 To see how \eqref{steindiff} can be put to use,
let $h \in L^1_{\mu}(X)$ and consider the equation
\begin{equation}\label{eq:genstequationfg} 
  h(x) - \E h(X) = f(x)  \cD^{\star} (g)(x)  + g(x) \mathcal{T}_Xf(x), \quad x \in \III.
\end{equation}
As discussed in the Introduction,  {E}quation \eqref{eq:genstequationfg}
 is indeed a \emph{Stein equation for the target $X$} in
the sense of \eqref{steineq}, although the solutions of
\eqref{eq:genstequationfg} are now pairs of functions $(f,g)$ with
$f\in\mathcal{F}(X)$ and $g \in dom(\cD,X,f)$ which satisfy the relationship
\begin{equation}
  \label{eq:steqsolsol}
  f (\cdot) g ( \cdot + l) = \mathcal{T}_X^{-1}(h - \EE_ph).
\end{equation}
We stress that although $fg$ is uniquely defined by
\eqref{eq:steqsolsol}, the individual 
$f$ and $g$ are not (just consider multiplication by constants).

Equation \eqref{eq:genstequationfg} and its solutions
\eqref{eq:steqsolsol} are not equivalent to  {E}quation
\eqref{steineq} and its solutions  already available from
the literature, but rather contain them, as illustrated in the
following example.

\begin{example}[Example \ref{ex:cpont}, continued] Taking $g = 1$
  (this is always permitted by Lemma~\ref{lem:constantsd}) and $p$
  differentiable we get the equation
\begin{equation}
  \label{eq:40}
  h(x) - \E h(X) = f'(x) + \frac{p'(x)}{p(x)} f(x), \quad x \in \III,
\end{equation}
whose solution is to be some function $f \in \mathcal{F}(X)$, as in
e.g. \cite{LS12a}. If the constant function $f \equiv 1$ is in $\cF(X)$ then 
keeping instead $g$ variable but taking $f\equiv 1$  yields the equation 
\begin{equation}
\label{eq:42}   h(x) - \E h(X) = g'(x) + \frac{p'(x)}{p(x)} g(x), \quad x \in \III, 
\end{equation}
whose solution is any function in $dom(\cD, X,1)$ the collection of
functions $g \in \mathcal{F}(X)$ such that $gp'/p \in L^1_{\mu}(X)$, a
family of equations considered e.g. in \cite{Stein2004}. Similar
considerations hold in the settings of examples \ref{ex:discm},
\ref{N01} and \ref{PoiL}.  We stress the
fact that the difference between \eqref{eq:40} and \eqref{eq:42} lies
in the space of solutions. 
 \end{example}


\subsection{Stein characterizations}
\label{sec:stein-char}

Pursuing the tradition in the literature on Stein's method, we provide
a general family of \emph{Stein characterizations for $X$}.   Aside from
Assumptions \ref{ass:prodrule}, \ref{ass:ker} and \ref{ass:1ind} we
will further need the following two assumptions to hold.
\begin{assumption}
  \label{ass:constantsonlyinker}
$f \in ker(\mathcal{D}^{\star})$ if $f \equiv \alpha$ for some $\alpha
\in \R$.  
\end{assumption}
\begin{assumption}
  \label{ass:uniquescore}
$\mathcal{D}f/f = \mathcal{D}g/g$  for $f, g \in dom
(\mathcal{D})$ if and only if  $f / g \equiv  \alpha$ 
for some $\alpha \in \R$. 
\end{assumption}
Both assumptions are simultaneously satisfied in all examples
discussed in Section \ref{sec:gener-vers-steins}.


\begin{theorem}\label{th:generalsetting} 
  Let $Y$ be a random element with the same support as $X$ and assume
  that the law of $Y$ is absolutely continuous \mbox{w.r.t.} $\mu$ with
  Radon-Nikodym derivative $q$.
  \begin{enumerate}
  \item Suppose that $\mathcal{F}(X)$ is dense in $L^1_{\mu}(p)$ and that $\frac{q}{p} \in
dom(\cD^*)$.  Take $g
    \in dom(\cD, X)$ which is $X$-a.s. never 0 and assume that $g
    \frac{q}{p} \in dom(\cD, X)$.  Then
  \begin{equation}\label{eq:3a}
     Y  \stackrel{\mathcal{D}}{=} X  \mbox{ if and only if }  \E
     \left[ f(Y)  \cD^{\star} (g) (Y) \right] = -\E \left[ g(Y)
       \mathcal{T}_Xf(Y) \right] 
  \end{equation}
for all $f \in \mathcal{F}(X) $.
\item  Let $f \in  \mathcal{F}(X)$ be $X$-a.s. never zero and assume that
  $dom(\cD,X, f)$  is dense in $L^1_{\mu}(p)$. 
 Then  
 \begin{equation}
\label{eq:3b}
    Y \stackrel{\mathcal{D}}{=} X  \mbox{ if and
      only if }  \E \left[  f(Y)  \cD^{\star} (g) (Y) \right] = -\E \left[ g(Y)
      \mathcal{T}_Xf(Y) \right]
 \end{equation}
for all $g \in dom(\mathcal{D}, X,f)
$.
\end{enumerate}
\end{theorem}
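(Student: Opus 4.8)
The plan is to prove both equivalences in the same way: the forward implication is immediate from Proposition~\ref{prop:skewadjoint}, so the work is entirely in the converse. For the converse of part 1, I would start from the hypothesis that $\E[f(Y)\cD^{\star}(g)(Y)] = -\E[g(Y)\mathcal{T}_Xf(Y)]$ holds for all $f \in \mathcal{F}(X)$, rewrite both expectations as integrals against $q\,d\mu$, and use the definition $\mathcal{T}_Xf = \cD(fp)/p$ to turn the right-hand side into $-\int_{\III} g \,\cD(fp)\, \frac{q}{p}\, d\mu$. The goal is to recognize this as an instance of the skew-adjointness relation \eqref{eq:4} (equivalently Assumption~\ref{ass:prodrule}) applied with the function $g\frac{q}{p}$ in place of $g$: since $g\frac{q}{p} \in dom(\cD,X)$ by hypothesis, the product rule \eqref{eq:2} gives $\cD\big(f(\cdot)p(\cdot)\,g(\cdot+l)\tfrac{q}{p}(\cdot+l)\big) = g\tfrac{q}{p}\,\cD(fp) + fp\,\cD^{\star}\!\big(g\tfrac{q}{p}\big)$ and the vanishing-integral condition lets us replace $-\int g\tfrac{q}{p}\cD(fp)\,d\mu$ by $\int fp\,\cD^{\star}(g\tfrac{q}{p})\,d\mu$. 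Combining with the rewritten left-hand side $\int f\,\cD^{\star}(g)\,q\,d\mu = \int fp\,\tfrac{q}{p}\cD^{\star}(g)\,d\mu$, the hypothesis becomes
\begin{equation*}
  \int_{\III} f(x)\, p(x)\left[ \tfrac{q}{p}(x)\,\cD^{\star}(g)(x) - \cD^{\star}\!\big(g\tfrac{q}{p}\big)(x) \right] d\mu(x) = 0 \quad \text{for all } f \in \mathcal{F}(X).
\end{equation*}
Because $\mathcal{F}(X)$ is dense in $L^1_{\mu}(p)$, the bracketed expression must be $0$ $p$-a.e.; using the product rule from Lemma~\ref{lem:DDDiff}\eqref{vv2} to expand $\cD^{\star}(g\tfrac{q}{p}) = \tfrac{q}{p}\,\cD^{\star}(g) + g(\cdot+l)\,\cD^{\star}(\tfrac{q}{p})$, the identity collapses (after cancelling the $\tfrac{q}{p}\cD^{\star}(g)$ terms and dividing by $g(\cdot+l)$, which is a.s.\ nonzero) to $\cD^{\star}(q/p) = 0$ $p$-a.e., i.e.\ $q/p \in ker(\cD^{\star})$. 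By Assumption~\ref{ass:constantsonlyinker} together with Assumption~\ref{ass:uniquescore}—or more directly, since $\cD^{\star}(q/p)=0$ forces $q/p$ constant via the characterization of $ker(\cD^{\star})$ on the relevant domain—we get $q = \alpha p$ for a constant $\alpha$, and integrating against $\mu$ over $\III$ (both are densities) forces $\alpha = 1$, hence $Y \stackrel{\mathcal{D}}{=} X$.

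For part 2 the argument is structurally identical but with the roles of $f$ and $g$ swapped. Fix the $X$-a.s.\ nonzero $f \in \mathcal{F}(X)$ and suppose the identity holds for all $g \in dom(\cD,X,f)$. Writing everything against $q\,d\mu$ and using the product rule \eqref{steindiff} from Theorem~\ref{sec:steindiff} applied to the pair $(f, g\cdot\text{(something)})$—or more cleanly, moving the factor $q/p$ inside as before—one reduces the hypothesis to $\int_{\III} g(x)\,(\text{some fixed function depending on }f, q/p)\, d\mu(x) = 0$ for all $g$ in a class dense in $L^1_{\mu}(p)$, forcing that fixed function to vanish $p$-a.e.; since $f$ is a.s.\ nonzero this again yields $\cD^{\star}(q/p) = 0$ and hence $q/p$ constant, so $q = p$.

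\textbf{Main obstacle.} The routine bookkeeping—carefully tracking the shift $l$ and verifying that all the integrability/vanishing-boundary hypotheses hidden in ``$g\frac{q}{p} \in dom(\cD,X)$'' and ``$dom(\cD,X,f)$ dense'' are exactly what is needed to justify each integration-by-parts step—is where the delicacy lies, but the genuine conceptual step is the final extraction: from ``$\int f \cdot (\text{expression}) = 0$ over a dense subset of $L^1_{\mu}(p)$'' one concludes the expression is $0$ $p$-a.e., and then one must convert ``$\cD^{\star}(q/p) = 0$'' into ``$q/p$ is ($p$-a.e.) constant.'' The latter is precisely the content of Assumption~\ref{ass:constantsonlyinker} read in the right direction (one needs that $ker(\cD^{\star})$, at least among ratios of densities, contains only constants); making sure the stated assumptions genuinely deliver this—rather than merely the easier direction ``constants are in $ker(\cD^{\star})$''—is the point I would be most careful about, and it may be cleanest to invoke Assumption~\ref{ass:uniquescore} instead, since $\cD^{\star}(q/p)=0$ can be rephrased via Lemma~\ref{lem:DDDiff}\eqref{vv1} as an equality of ``scores'' $\cD(\text{something})/(\text{something})$ forcing proportionality.
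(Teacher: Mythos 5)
Your Part~1 argument reproduces the paper's proof essentially verbatim: forward direction from Proposition~\ref{prop:skewadjoint}; for the converse, integrate against $q\,d\mu$, apply skew-adjointness to $g\frac{q}{p}$ (which is why the hypothesis $g\frac{q}{p}\in dom(\cD,X)$ is imposed), invoke density of $\mathcal{F}(X)$ in $L^1_\mu(p)$, expand $\cD^\star(g\frac{q}{p})$ via Lemma~\ref{lem:DDDiff}\eqref{vv2}, cancel, divide by the a.s.\ nonzero $g(\cdot+l)$, and land on $\cD^\star(q/p)=0$ $p$-a.e. Then Assumption~\ref{ass:constantsonlyinker} delivers $q=cp$ and normalisation gives $c=1$. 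This is exactly the paper's route.

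Part~2 is where the gap is. You assert that the same reduction "again yields $\cD^{\star}(q/p)=0$", but that is not what falls out, and in general it is not even an available intermediate step when the shift $l\neq 0$. Writing the hypothesis against $q\,d\mu$ and using skew-adjointness for $X$ (now holding $f$ fixed and letting $g$ range over the dense class $dom(\cD,X,f)$) yields, after density, the pointwise identity $\cD(fq)=\frac{q}{p}\cD(fp)$ $p$-a.e. If you then try to extract $\cD^\star(q/p)$ by the product rule you must split $fq=(fp)\cdot(q/p)$, and \eqref{eq:2} forces a shift: $\cD\bigl((fp)(x)\tfrac{q(x)}{p(x)}\bigr)=\tfrac{q(x-l)}{p(x-l)}\cD(fp)(x)+(fp)(x)\cD^\star\bigl(\tfrac{q(\cdot-l)}{p(\cdot-l)}\bigr)(x)$, so the unshifted prefactor $\frac{q}{p}$ in the hypothesis does not cancel and you are left with residual terms unless $l=0$. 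The paper avoids this by expanding \emph{both} $\cD(fp)\frac{q}{p}$ and $\cD(fq)$ via the product rule with $p$ (resp.\ $q$) as the density factor and $f(\cdot-l)$ as the other, so that the $q\,\cD^\star f(\cdot-l)$ terms cancel and what survives is the \emph{score} identity $\cD p/p=\cD q/q$ $p$-a.e.; this is then closed not by Assumption~\ref{ass:constantsonlyinker} but by Assumption~\ref{ass:uniquescore}, which is exactly tailored to that endpoint. Your ``Main obstacle'' paragraph senses this (you suggest switching to Assumption~\ref{ass:uniquescore}) but does not identify that the correct intermediate conclusion in Part~2 is a score equality rather than $\cD^\star(q/p)=0$; as written, your sketch would only be valid in the $l=0$ case.
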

\begin{remark}
The assumptions leading to \eqref{eq:3a} and \eqref{eq:3b} can  be relaxed
by removing the assumption that $Y$ and $X$ share a support $\III$ but instead 
conditioning on the event that $Y \in \III$ and  writing 
   $Y \, | \, Y \in \III
\stackrel{\mathcal{D}}{=} X$ to indicate that $p = c q$  on $\III$, for a constant $c=P(Y \in \III)$, see
\cite{LS12a}. 
\end{remark}

 \begin{proof}


   The sufficient conditions are immediate. Indeed,  from
   \eqref{eq:basicstein}, if $Y$ has the same distribution as $X$ then
   \eqref{eq:3a} and \eqref{eq:3b} hold true. 

   We now prove the necessity. We start with statement 1. Let $g$ be
   such that $g q/p \in dom(\cD, X)$.  Then, $g q/p \in
   dom(\mathcal{D}^{\star})$ and,  for all $f\in
   \mathcal{F}(X)$,  we have $\mathcal{D}^{\star}(g q/p) fp \in
   L^1(\mu)$ as well as $\int \mathcal{D} \left( g(\cdot + l)
     \frac{q(\cdot+l)}{p(\cdot+l)}f(\cdot)p(\cdot) \right) d\mu = 0$ and we can
   apply  \eqref{eq:4} to get 
\begin{align*}
  \E \left[  g(Y)
      \mathcal{T}_Xf(Y) \right] 
    = \int g \frac{q }{p } \cD(fp)  d\mu = - \int  f  p  \cD^{\star} \left( g \frac{q}{p}\right)  d\mu. 
\end{align*}
Supposing \eqref{eq:3a} gives 
\begin{equation*}
  \int \cD^{\star} \left( g \frac{q}{p}\right)  f  p  d\mu =  \int
   f \cD^{\star} \left( 
  g \right) q d\mu =  \int
   f \cD^{\star} \left( 
  g \right) \frac{q}{p}p d\mu 
\end{equation*}
for all $f \in \mathcal{F}(X)$. On the one hand, as  $\mathcal{F}(X)$
is assumed to be dense in
$L^1_{\mu}(p)$, it follows that $\cD^{\star} \left( g \frac{q}{p}\right) =  \frac{q}{p}\cD^{\star} \left( g
\right)$ $p-\mbox{a.e.}$ and, 
on the other hand, by Claim 2. in  Lemma \ref{lem:DDDiff} we know that 
$\cD^{\star} \left( g \frac{q}{p}\right)  = \frac{q}{p}
   \mathcal{D}^{\star} g + g(\cdot+l) \mathcal{D}^{\star} \left(
     \frac{q}{p} \right). $
Equating these two expressions gives that 
$ g(\cdot+l) \mathcal{D}^{\star} \left(
  \frac{q}{p} \right) = 0 $ $p-\mbox{a.e.}$ and,  as $g$ is
$p$-a.e.~never 0 we obtain that
$$  \mathcal{D}^{\star} \left(
     \frac{q}{p} \right) = 0 \quad 
 \quad p-\mbox{a.e.}.$$
Assumption \ref{ass:constantsonlyinker} now gives that there is a constant $c$ such that  $p = c q$ except on a set of $p$-measure 0. As both $p$ and $q$ integrate to 1, it must be the case that $c=1$, and so 
$p=q $ on $supp(p)$, which gives the first assertion.

We tackle statement 2.  If $g \frac{q(\cdot-l)}{p(\cdot-l)} \in dom(\mathcal{D}, X, f)$
then 
\begin{equation*}
  \int \mathcal{D}(f(\cdot) \frac{q(\cdot)}{p(\cdot)}g(\cdot+l))
d\mu =\int \mathcal{D}(f(\cdot)g(\cdot +l) q(\cdot))
d\mu =  0
\end{equation*}
 so that 
\begin{align*}
  \E \left[    f(Y) \cD^{\star} \left( g \right)(Y) \right]
 = - \int     g    \cD(fq)   d\mu  
  = - \int       \frac{\cD(fq) }{p } g p  d\mu.  
\end{align*}
Supposing \eqref{eq:3b} gives 
\begin{equation*}
\int       \frac{\cD(fq) }{p } g p  d\mu   =  \int
\frac{\cD(fp)}{p} g q d\mu   =  \int
\frac{\cD(fp)}{p} g \frac{q}{p}p d\mu
\end{equation*}
for all $g \in dom (\mathcal{D}, X, f)$. As 
$dom(\mathcal{D}, X, f)$ is assumed to be dense  in $L^1(\mu)$ it follows that $\cD(fq) =
\cD(fp)  \frac{q}{p}$. On the one hand 
$\mathcal{D}(fp) \frac{q}{p}  = f(\cdot -l) \frac{q}{p} \mathcal{D}p +
q \mathcal{D}^{\star}f(\cdot-l)
$
and, on the other hand, 
$  \mathcal{D}(fq) = f( \cdot - l) \mathcal{D}(q) + q
\mathcal{D}^{\star}f(\cdot-l).$
Simplifying and using the fact that $f$ is never 0 we deduce the
equivalent score-like condition 
\begin{align*}
  \frac{\cD(q)}{q} =
\frac{\cD(p) }{p}  \quad p-a.e.
\end{align*}
Assumption \ref{ass:uniquescore} gives the conclusion. 
\end{proof}
Theorem \ref{th:generalsetting} generalizes   the literature on this
topic in a subtle, yet fundamental, fashion. To see this first take
$g\equiv1$ in \eqref{eq:3a} (recall that this is always permitted) to
obtain the Stein characterization
  \begin{equation*}
    Y \stackrel{\mathcal{D}}{=} X  \mbox{ if and
      only if } \E \left[ \mathcal{T}_Xf(Y) \right]=0  \mbox{ for
    all }f\in\mathcal{F}(X)
  \end{equation*}
  which is valid as soon as the densities of $X$ and $Y$ have same
  support and $q/p \in dom (\mathcal{D}, X,\cdot)$. This is the
  characterization given in \cite{LS12a,LS13b}. If $f \equiv 1$ is in
  $\cF(X)$
then, for this choice of $f$ in \eqref{eq:3b} we obtain
  the Stein characterization
  \begin{equation*}
    Y \stackrel{\mathcal{D}}{=} X  \Longleftrightarrow  \E [ g'(Y)] = - \E \left[\frac{p'(Y)}{p(Y)}g(Y)
    \right]=0  \mbox{ for
    all } g \in dom(\mathcal{D}, X, 1).
  \end{equation*}
Here we assume that
$p$ and $q$  share same
support. The condition $g \in dom (\mathcal{D}, X, 1)$ is equivalent
to $g(\cdot+l) \in \mathcal{F}(X)$ and $\E \left| g(X)
  \mathcal{T}_X1(X) \right|<\infty$.  This is the general
characterization investigated  in \cite{Stein2004}.

 \begin{remark}\label{rem:theconstfunct}
The hypothesis  that the constant function 1 belongs
to $\mathcal{F}(X)$ is not a small
assumption. Indeed, we easily see that 
\begin{equation*}
  1 \in \mathcal{F}(X) \Longleftrightarrow p'/p \in L^1_{\mu}(X) \mbox{ and }\int_{\mathcal{I}}p'(x) dx =
0. 
\end{equation*}
This condition is not satisfied e.g. by the exponential
distribution $p(x) = e^{-x}\mathbb{I}(x \ge 0)$ (because the integral
of the derivative is not 0) nor by the arcsine distribution $p(x) =
1/\sqrt{x(1-x)}\mathbb{I}(0 < x < 1)$ (because
the derivative is not integrable). 
 \end{remark}

\begin{remark}
Our approach is reminiscent of Stein
characterizations of birth-death processes where one can choose the
death rate and adjust the birth rate accordingly, see \cite{Ho04} and
\cite{eichelsbacher2008stein}. 
\end{remark}


\subsection{Connection  with biasing}\label{sec:conn-with-bias}
  In \cite{GR97} the notion of a zero-bias random variable was
  introduced. Let $X$ be a mean zero random variable with finite,
  nonzero variance $\sigma^2$. We say that $X^*$ has the $X$-zero
  biased distribution if for all differentiable $f$ for which $\E X
  f(X)$ exists, $$\E X f(X) = \sigma^2 \E f'(X^*).$$ Furthermore the
  mean zero normal distribution with variance $\sigma^2$ is the unique
  fixed point of the zero-bias transformation.

  More generally, if $X$ is a random variable with density $p_X \in
  {\it{dom}}(\cD^*)$ then for all $f \in {\it{dom(\cD)}}$, by
  \eqref{eq:2} we have
$$ 
  p_X(x) \cT_X(f)(x) =   \mathcal{D}(f(x)p_X(x) ) = p_X(x-l) \mathcal{D}f(x)
 + f(x) \mathcal{D}^{\star}p_X(x-l)
$$ 
and so 
$$ \E \left[ \frac{p_X(X-l) }{p_X(X)}  \mathcal{D}f(X) \right] + \E
\left[ f(X) \frac{ \mathcal{D}^{\star}p_X(X-l)}{p_X(X)} \right]
=0.  $$  
This equation could lead to the definition of a transformation which
maps a random variable $Y$ to $Y^{(X)}$ such that, for all $f \in
{\it{dom}}(\cD)$ for which the expressions exist,
$$  \E  \left[ \frac{p_X(Y^{(X)}-l) }{p_X(Y^{(X)})}
  \mathcal{D}f(Y^{(X)}) \right] = -  \E \left[  f(Y) \frac{
    \mathcal{D}^{\star}p_X(Y-l)}{p_X(Y)}  \right] .$$ 
For some conditions which give the existence of such $Y^*$ see
\cite{GR05}. As an illustration, in the setting of Example 
 \ref{ex:cpont}, if the density $p$ is log-concave (so that  $-p'/p$ is
 increasing) then the existence of the
 coupling $Y^{(X)}$ is straightforward via   the
 Riesz representation theorem, as in \cite{GR97}.

Finally assume that $\cF(X) \cap {\it{dom}}(\cD)$ is
dense in $L^1_{\mu}(X)$.  To see that $Y =_d X$ if and only if $Y^{(X)}=_d
Y$, first note that by construction if $Y =_d X$ then $Y^{(X)}=_d
Y$. On the other hand, if $Y^{(X)}=_d Y$, then $ \E \cT_X(f)(Y) =0$
for all $f \in \cF(X)\cap {\it{dom}}(\cD)$, and the assertion follows
from the density assumption and \eqref{eq:3a}. Hence \eqref{eq:3a} can
be used to establish distributional characterizations based on biasing
equations.


\section{Stein operators}
\label{sec:examples}

Let $X$ be a random variable with support $\mathcal{X}$,  let $\cD$ be
a linear  operator acting  on $\mathcal{X}^{\star}$ and 
satisfying Assumptions \ref{ass:prodrule} and \ref{ass:ker}.  
There are now two seemingly antagonistic points of view : 
\begin{enumerate}[-]
\item  In the
Introduction we mention the fact that Stein's method for $X$ relies
on a pair $(\mathcal{A}_X, \mathcal{F}(\mathcal{A}_X))$ with
$\mathcal{A}_X$ a differential operator acting on
$\mathcal{F}(\mathcal{A}_X)$ a class of functions. For any given $X$,
the literature on Stein's method contains \emph{many} different such
(not necessarily first order!) operators and classes.
\item In Section
\ref{sec:gener-vers-steins}, we claim to obtain ``{the}''
canonical operator associated to $X$, denoted $\mathcal{T}_X$, acting
on ``{the}'' canonical class $\mathcal{F}(X)$ (uniqueness up to the
choice of $\mathcal{D}$) with unique inverse $\mathcal{T}_X^{-1}$.    
\end{enumerate}
In this  section we merge these two points of view. Our
general point of view is that a Stein operator for a random variable
$X$ is any operator that can be written in the form
  \begin{equation}
    \label{eq:32}
    \mathcal{A}_X : \mathcal{F}(X) \times dom(\cD, X, \cdot)  \to
    \mathcal{X}^{\star} : (f, g) \mapsto \mathcal{T}_X(fg), 
  \end{equation}
and, given $h \in L^1_{\mu}(X)$,  the corresponding Stein equation is
\begin{equation*}
  h - \E h(X) =   \mathcal{A}_X(f, g)
\end{equation*}
whose solutions are any functions $f \in \mathcal{F}(X)$ and $g\in
dom(\mathcal{D}, X, f)$ such that $fg = \mathcal{T}_X^{-1}(h - \E
h(X))$. There are many ways to particularise  \eqref{eq:32}, such as 
\begin{enumerate}
\item fix $f\in \mathcal{F}(X)$ and let $g$ vary in
 $dom(\mathcal{D}, X, f)$,
\item fix $g \in dom(\mathcal{D}, X)$ and
 let  $f$ vary in $\mathcal{F}(X)$,
\item let $f$ and $g$ vary
 simultaneously.
\end{enumerate}
We refer to these mechanisms as \emph{standardizations}. 

For the first approach pick a function ${f} \in \mathcal{F}(X)$ and
define the operator
\begin{equation}
  \label{eq:22}
  \mathcal{A}_Xg = \mathcal{T}_X \left( {f}  (\cdot) g (\cdot + l)  \right) = {f}  \cD^{\star}
  (g)  + g \mathcal{T}_X {f} 
\end{equation}
acting on functions $g \in \mathcal{F}(\mathcal{A}_X) = dom(\mathcal{D}, X,
{f} )$. The corresponding Stein equation is
\begin{equation*}
  h -\E h(X) = \mathcal{A}_Xg
\end{equation*}
whose solutions are $g \in dom(\mathcal{D}, X, {f} )$ given by $g =
\mathcal{T}_X^{-1}( h -\E h(X))/{f} $. 

The second option  is to fix a function
  $g \in dom(\mathcal{D}, X)$ and define the operator
  \begin{equation}
    \label{eq:23}
    \mathcal{A}_{X}f = \mathcal{T}_X \left( f (\cdot) g (\cdot + l) \right) = f
    \cD^{\star} (g)  + g \mathcal{T}_Xf
  \end{equation}
  acting on functions $f \in \mathcal{F}(X)$. In this case  solutions 
  of the Stein equation are $f \in \mathcal{F}(X)$ given by $f =
\mathcal{T}_X^{-1}( h -\E h(X))/g$.  

The third option is to consider operators of the form 
\begin{equation}
  \label{eq:9}
  \mathcal{A}_{X}(f,g) = \mathcal{T}_X \left( f (\cdot) g (\cdot + l) \right) = f
    \cD^{\star} (g)  + g \mathcal{T}_Xf
\end{equation}
acting on functions $(f, g) \in \mathcal{G}_1\times \mathcal{G}_2$
where $\mathcal{G}_1, \mathcal{G}_2 \subseteq \mathcal{X}^{\star}$
are  such that $f (\cdot) g (\cdot + l)
\in \mathcal{F}(X)$. For example we could consider $\mathcal{G}_i$
polynomial functions or exponentials and pick $\mathcal{G}_j$ with
$j\neq i$ so as to satisfy the assumptions. Solutions of the Stein
equation are pairs of functions such that $f (\cdot) g (\cdot + l)
=\mathcal{T}_X^{-1}( h -\E h(X))$.

\begin{remark} The use of the notation $c$ in \eqref{eq:22} relates to
  the notation in \cite{goldstein2013stein}, where the idea of using a
  $c$-function to generate a family of Stein operators \eqref{eq:22}
  was first proposed (in a less general setting).
\end{remark}

\begin{remark}
Although appearances might suggest
otherwise, operators \eqref{eq:22} and \eqref{eq:23} are not
necessarily first order differential/difference operators. One readily
obtains higher order operators by considering, for example, classes
$\mathcal{F}_A(X)$ of functions of the form $f = \mathcal{D}^k\tilde{f}$ for
$\tilde{f}$ 
appropriately chosen; see Section~\ref{sec:distr-satisfy-diff}. 
\end{remark}

The difference between \eqref{eq:22}, \eqref{eq:23} and \eqref{eq:9}
is subtle (the first two being particular cases of the third). The
guiding principle is to find a form of Stein equation for which the
solutions are smooth. The remainder of the Section is dedicated to
illustrating standardizations under several general assumptions on the
target density, hereby providing interesting and important families of
Stein operators.


\subsection{Stein operators via score functions}
\label{sec:score-funct-stein}

Suppose that $X$ is such that the constant function $1 \in
\mathcal{F}(X)$  and define 
\begin{equation}
  \label{eq:10}
u(x) = \mathcal{T}_X1(x) =  \frac{\cD p(x)}{p(x)}
\end{equation}
 the so-called score function of $X$.  Then taking
${f} = 1$ in \eqref{eq:22} we introduce the operator
\begin{equation} \label{scoreeq}
 \mathcal{A}_Xg(x) = \cD^\star g(x-l) + u(x) g(x-l) 
\end{equation}
acting on  
$
  \mathcal{F}(\mathcal{A}_{X}) =  dom(\mathcal{D},
X, 1).
$
The corresponding Stein equation is
\begin{equation*}
  \bar{h}(u) = \cD^\star g(x-l) + g(x-l) u(x) 
\end{equation*}
for $\bar{h}$ any function with $X$-mean zero;  solutions of this 
equation are the functions  
\begin{equation*}
  g_h=  \mathcal{T}_X^{-1} \left( \bar{h} \right)
\end{equation*}
and bounds on these functions (as well as on their derivatives) are
crucial to the applicability of Part B of Stein's method through
operator \eqref{scoreeq}.

In the continuous setting of Example \ref{ex:cpont} we recover
operator \eqref{eq:36}.  In this case  $\mathcal{F}(\mathcal{A}_X)$ is
the set of all differentiable
functions $g$ such that 
\begin{equation*}
  \E \left|g'(X)  \right| <\infty \mbox{ and } \E \left| g(X) u(X)
  \right|<\infty. 
\end{equation*}
These are the  conditions (27) and (28) from \cite[Proposition
4]{Stein2004}.  

\begin{remark}
  The terminology ``score function'' for the function
  $\mathcal{D}p(x)/p(x)$ is standard (at least in the continuous
  case); it is inherited from the statistical literature.
\end{remark}

\subsection{Stein operators via the Stein kernel}
\label{sec:stein-oper-stein-1}

Suppose that   $X$ has finite mean $\nu$ and define 
\begin{equation}
  \label{eq:15}
  \tau(x) = \mathcal{T}_X^{-1}(\nu-Id) 
\end{equation}
a function which we call the \emph{Stein kernel} of $X$ (see
forthcoming Remark~\ref{rem:steke} as well as
Sections~\ref{sec:comp-stein-kern} and \ref{sec:sum}).  Next take ${f} = \tau $ in
\eqref{eq:22} (this is always permitted) and introduce the operator
\begin{equation} \label{eq:tau}
\mathcal{A}_Xg(x) =  \tau(x) \cD^\star g(x-l) + (\nu-x) g(x-l)
\end{equation}
acting on 
$
  \mathcal{F}(\mathcal{A}_X) = dom(\mathcal{D}, X, \tau).
$
The corresponding Stein equation is
\begin{equation*}
  \bar{h}(x) =  \tau(x) \cD^\star g(x-l) + (\nu-x) g(x-l)
\end{equation*}
for $\bar{h}$ any function with $X$-mean 0; solutions of this
equation are the functions 
\begin{equation*}
  g_h = \frac{1}{\tau} \mathcal{T}_X^{-1}(\bar{h})
\end{equation*}
and bounds on these functions (as well as on their derivatives) are
crucial to the applicability of Part B of Stein's method via operator
\eqref{eq:tau}. 

In the continuous setting of Example \ref{ex:cpont},
$\mathcal{F}(\mathcal{A}_X)$ is the set of all differentiable
functions such that
\begin{equation*}
  \E \left|g(X) (X-\nu)  \right|<\infty \mbox{ and } \E \left| g'(X)
    \tau(X)\right|<\infty.
\end{equation*}
These integrability conditions are the same as in \cite[Lemma
2.1]{papadatos1995distance}; see also  \cite{CP89}.

  The  Stein kernel \eqref{eq:15} has a number of remarkable
  properties. In particular,  it plays a pivotal role in the connection between
  information inequalities and Stein's method, see
  \cite{ledoux2014stein,nourdin2013integration,nourdin2013entropy}. 

  \begin{proposition}\label{prop:stek}
    Let {Assumptions 1-5 hold}. 
    Suppose furthermore
    that there exists  $\delta>0$ such that  $\mathcal{D}^{\star}(a\, Id+b) = a\, \delta$ for all $a, b
    \in \R$ and $Id(x) = x$ the
    identity. Then 
    \begin{equation}
      \label{eq:45}
      \E \left[ \tau(X) \mathcal{D}^{\star}g(X-l) = \E \left[ (X- \nu)
        g(X)\right] \right]
    \end{equation}
for all $g \in dom(\mathcal{D}, X, \tau)$ and
  \begin{equation}\label{eq:46}
    \E \left[ \tau(X) \right] = \delta^{-1} \mbox{Var}(X).
  \end{equation}
  \end{proposition}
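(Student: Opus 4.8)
The plan is to derive \eqref{eq:45} directly from the Stein product rule (Theorem~\ref{sec:steindiff}) applied to the specific test function $f=\tau$, and then to obtain \eqref{eq:46} by specializing \eqref{eq:45} to a linear choice of $g$. First I would recall that by the very definition of the Stein kernel, $\tau = \mathcal{T}_X^{-1}(\nu - \mathrm{Id})$, which means $\tau \in \mathcal{F}(X)$ and $\mathcal{T}_X\tau(x) = \nu - x$. Invoking Proposition~\ref{prop:skewadjoint} (equation~\eqref{eq:basicstein}) with this $f=\tau$ gives, for every $g \in dom(\mathcal{D}, X, \tau)$,
\[
\E\left[\tau(X)\, \mathcal{D}^{\star}g(X)\right] = -\E\left[g(X)\,\mathcal{T}_X\tau(X)\right] = -\E\left[g(X)(\nu - X)\right] = \E\left[(X-\nu)g(X)\right].
\]
(The statement of the proposition writes the argument of $\mathcal{D}^{\star}g$ as $X-l$ rather than $X$; this is a harmless notational convention matching the $(\cdot + l)$-shift appearing throughout Section~\ref{sec:operator}, and one simply tracks it through using the product rule \eqref{steindiff} in the form $\mathcal{T}_X(\tau(\cdot)g(\cdot+l)) = \tau\,\mathcal{D}^{\star}g + g\,\mathcal{T}_X\tau$.) This establishes \eqref{eq:45}.

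For \eqref{eq:46}, the idea is to feed the linear test function $g = \mathrm{Id}$ into \eqref{eq:45}. By the extra hypothesis that $\mathcal{D}^{\star}(a\,\mathrm{Id} + b) = a\,\delta$, we have $\mathcal{D}^{\star}(\mathrm{Id}) = \delta$ (taking $a=1$, $b=0$), a constant. Hence the left-hand side of \eqref{eq:45} with $g = \mathrm{Id}$ becomes $\E[\tau(X)\,\delta] = \delta\,\E[\tau(X)]$, while the right-hand side becomes $\E[(X-\nu)X] = \E[(X-\nu)^2] = \mathrm{Var}(X)$, where the last identity uses $\E[(X-\nu)] = 0$. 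Solving gives $\E[\tau(X)] = \delta^{-1}\mathrm{Var}(X)$, which is \eqref{eq:46}.

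The main obstacle is a bookkeeping one rather than a conceptual one: verifying that the linear function $g = \mathrm{Id}$ actually lies in $dom(\mathcal{D}, X, \tau)$, so that \eqref{eq:45} is licit for this choice. By the definition \eqref{dom1} of $dom(\mathcal{D}, X, f)$ this requires $\mathrm{Id} \in dom(\mathcal{D}^{\star})$ (guaranteed by the hypothesis $\mathcal{D}^{\star}(a\,\mathrm{Id}+b) = a\delta$, which presupposes $\mathrm{Id}$ is in the domain), that $\mathrm{Id}(\cdot+l)\,\tau(\cdot) \in \mathcal{F}(X)$, and that at least one of $\E|\tau(X)\mathcal{D}^{\star}\mathrm{Id}(X)|$ or $\E|\mathrm{Id}(X)\mathcal{T}_X\tau(X)|$ is finite. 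The second of these is $\E|X(\nu-X)| \le |\nu|\,\E|X| + \E[X^2]$, which is finite precisely when $X$ has a finite second moment — so one should either add this as a standing assumption (it is implicit in $\mathrm{Var}(X)$ appearing in \eqref{eq:46}) or note that $\E[\tau(X)]$ and $\mathrm{Var}(X)$ are simultaneously finite or infinite, with the identity \eqref{eq:46} then holding in $[0,\infty]$. The condition $\mathrm{Id}(\cdot+l)\tau(\cdot) \in \mathcal{F}(X)$ amounts to the appropriate vanishing of $(x+l)\tau(x)p(x)$ at the boundary of $\mathcal{I}$, which follows from the integrability of $x\mapsto (x-\nu)p(x)$ near the endpoints together with the explicit formula $\tau(x)p(x) = \mathcal{D}^{-1}((\nu-\mathrm{Id})p)(x)$; I would spell this out in the continuous case (Example~\ref{ex:cpont}) where $\tau(x)p(x) = \int_x^b(u-\nu)p(u)\,du \to 0$ at $b$, and remark that the discrete cases are analogous. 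Finally, I would note that \eqref{eq:45} as displayed contains an unbalanced bracket in the source (the closing $\right]$ is misplaced); the intended identity is $\E[\tau(X)\mathcal{D}^{\star}g(X-l)] = \E[(X-\nu)g(X)]$, and I would write it in that corrected form.
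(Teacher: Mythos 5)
Your proof is correct and follows essentially the same path as the paper's very terse proof, which merely declares \eqref{eq:45} obvious and obtains \eqref{eq:46} by plugging a linear $g$ into it. You spell out the derivation of \eqref{eq:45} from Proposition~\ref{prop:skewadjoint} with $f=\tau$ and $\mathcal{T}_X\tau=\nu-\mathrm{Id}$, choose $g=\mathrm{Id}$ where the paper chooses $g(x-l)=x-\nu$ (both work because the hypothesis $\mathcal{D}^{\star}(a\,\mathrm{Id}+b)=a\delta$ kills the constant shift), and you usefully flag the domain verification and the misplaced bracket in \eqref{eq:45} that the paper leaves implicit.
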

  \begin{proof}
    Identity \eqref{eq:45} is obvious and \eqref{eq:46} follows by
    taking $g(x-l) =  x-\nu$ (which is allowed) in \eqref{eq:45}.
  \end{proof}
  \begin{remark}\label{rem:positstek}
It is easy to show that, moreover,  $\tau(x)\ge0$ if $\mathcal{D}$ is
either the strong derivative or the discrete forward/backward
difference. 
  \end{remark}
\begin{remark}\label{rem:steke}
  Although the function $\tau = \mathcal{T}_X^{-1}(\nu-Id)$ has
  already been much used in the literature, it has been given various
  names all marked with some ambiguity.  Indeed
  \cite{NoPe09,NP11,nourdin2013entropy} (among others) refer to $\tau$
  as the ``Stein factor'' 
  despite the fact that this 
  {term} also refers to the bounds on
  the solutions of the Stein equations, see
  \cite{RO07,daly2008upper,BGX13}. Other authors, including
  \cite{CP95,cacoullos1998short,CPP01}, rather refer to this function
  as the ``$\omega$-function'' or the ``covariance kernel'' of $X$. We
  prefer to unify the terminology by calling $\tau$ a \emph{Stein
    kernel}.
\end{remark}
Two particular {instances} of \eqref{eq:tau}  have already been perused in
the literature in the following case.  
\begin{definition}[Pearson's class of  distributions]\label{def:pearson-class}
  A continuous distribution $p$ with   support $supp(p)$ is a member of
  Pearson's family of distributions if it is solution to the
  differential equation 
  \begin{equation}
    \label{eq:27}
    \frac{p'(x)}{p(x)} =
    \frac{\alpha-x}{\beta_2(x-\lambda)^2+\beta_1(x-\lambda) + \beta_0}
  \end{equation}
for some constants $\lambda, \alpha, \beta_j, \, j=0, 1, 2$. 
\end{definition}
Properties of the differential operator $\mathcal{T}_Xf = (fp)'/p$
have 
{been} studied in quite some detail for distributions $p$
which belong to Pearson's class of distributions, see
e.g. \cite{DZ91,korwar1991characterizations,johnson1993note,papathanasiou1995characterization,lefevre2002generalized,loh2004characteristic,APP11}.
If $X \sim p$, a Pearson distribution, then by definition its
derivative $p'$ exists and, using 
 \eqref{eq:11}, its canonical Stein operator is
\begin{equation*}
  \mathcal{T}_Xf(x)  = f'(x) +
  \frac{\alpha-x}{\beta_2(x-\lambda)^2+\beta_1(x-\lambda) + \beta_0} f(x)
\end{equation*}
for $x \in \mbox{supp}(p)$. In general this operator is not easy to
handle. 

It is shown in \cite{korwar1991characterizations} that, in the setting
of Example \ref{ex:cpont},  a density $p$ satisfies 
  \eqref{eq:27} if and only if its Stein
kernel $\tau(x)$ is quadratic. This function can be calculated (using
e.g. \cite[equation (3.5)]{DZ91}), and is given by
  \begin{equation*}
    \tau(x)  = \frac{\beta_0+\beta_1x + \beta_2x^2}{1-2 \beta_2},
  \end{equation*}
see  also \cite{papathanasiou1995characterization}.
This observation leads us to considering distributions, discrete or
continuous, which have Stein kernel of the form 
  \begin{equation}\label{eq:31}
\mathcal{T}_X^{-1}(\nu-Id)(x) = a+ bx + cx^2
  \end{equation}
  for some constants $a, b$ and $c$.  
For distributions
satisfying \eqref{eq:31} we deduce  a natural family of Stein operators
\begin{equation*}
  \mathcal{A}_Xg(x) = \left( a+ bx + cx^2 \right) \mathcal{D}^{\star} g(x) + (\nu-x) g(x)
\end{equation*}
acting on the class $\mathcal{F}(\mathcal{A}_X)$ of 
functions such that $g \tau \in \mathcal{F}(X)$ as well as 
\begin{equation*}
  \E \left|g(X) (\nu-X)  \right|<\infty \mbox{ and } \E \left| \mathcal{D}^{\star}g(X)
   \left( a+ bX + cX^2 \right)\right|<\infty. 
\end{equation*}

  \begin{remark}
    \cite{S01,APP11} call the class of densities satisfying
    \eqref{eq:31} the Pearson class (in the continuous case) and the
    Ord class (in the discrete case); Ord's class as
    originally defined in \cite{ord1967system} is, in fact, larger. In
    the case of integer valued random variables, \cite[Theorem
    4.6]{korwar1991characterizations} shows that, under conditions on
    the coefficients, condition \eqref{eq:31} is equivalent to
    requiring that $p(x) = \binom{a}{x}\binom{b}{n-x}/\binom{a+b}{n}$
    for some constants $a, b$ and $n$, {so that} $X$ has a
    generalized hypergeometric distribution.  See also
    \cite{AfBalPa2014} where  distributions satisfying  \eqref{eq:31}  are referred to as 
    Cumulative Ord distributions; see in particular their Proposition
    2.1 for a characterization.
  \end{remark}
\begin{example}\label{ex:listoftaus}
  Many ``useful'' densities satisfy \eqref{eq:31} in which case
  operator \eqref{eq:tau} has a nice form as well. The 
following examples are easy to compute and will be useful in the 
sequel; for future reference we also provide the log derivative of
the density. 
\begin{enumerate}
\item Continuous setting, strong derivative : 

\begin{enumerate}
\item  Gaussian $\mathcal{N}(0, \sigma^2)$ with $p(x) = (2\pi)^{-1}
  e^{-x^2/2}$ on $\mathcal{I} = \R$~:  
  \begin{equation*}
    \frac{p'(x)}{p(x)}= -\frac{x}{\sigma^2}
\mbox{ and } \tau(x) =  \sigma^2;
  \end{equation*}
\item  Gamma $\Gamma(\alpha, \beta)$  with $p(x) = \beta^{-\alpha} \Gamma(\alpha)^{-1}
  e^{-x/\beta}x^{\alpha-1}$ on $\mathcal{I} = \R^+$~: 
  \begin{equation*}
 \frac{p'(x)}{p(x)} = \frac{-1+\alpha}{x} -
  \frac{1}{\beta} \mbox{ and }  \tau(x) = \frac{x}{\beta};
  \end{equation*}
\item  Beta $\mathcal{B}(\alpha, \beta)$ with $p(x) = B(\alpha,
  \beta)^{-1}x^{\alpha-1}(1-x)^{\beta-1}$  on $\mathcal{I} = [0,1]$~:  
  \begin{equation*}
 \frac{p'(x)}{p(x)} = \frac{\alpha-1}{x}
  - \frac{\beta-1}{x-1}
 \mbox{ and }\tau(x) =\frac{ x(1-x)}{\alpha+\beta};
  \end{equation*}
\item  Student  $t_{t}$ (for $t>1$) with $p(x) = \nu^{-1/2} B(\nu/2,
  1/2)^{-1}(\nu/(\nu+x^2))^{(1+\nu)/2}$ on $\R$~:  
  \begin{equation*}
 \frac{p'(x)}{p(x)} = - \frac{x(1+ t)}{t + x^2} \mbox{ and }
 \tau(x) = \frac{x^2+t}{t-1}. 
  \end{equation*}
\end{enumerate}
\item Discrete setting, forward derivative : 
  \begin{enumerate}
\item   Poisson $Po(\lambda)$ with $p(x) = e^{-\lambda} \lambda^x/x!$
  on $\mathcal{I} = \Z$~: 
  \begin{equation*}
    \frac{\Delta^+p(x)}{p(x)} = \frac{\lambda}{x+1}-1 \mbox{ and
    }\tau(x) =x;
  \end{equation*}
\item  Binomial $Bin(n, p)$ with $p(x) = \binom{n}{x}p^x(1-p)^{n-x}$
  on $\mathcal{I} = [0, n] \cap \Z$~: 
  \begin{equation*}
   \frac{\Delta^+p(x)}{p(x)} = \frac{(n-x)}{x+1}\frac{p}{1-p}-1 \mbox{ and
    } \tau(x) = (1-p)x.
  \end{equation*}
  \end{enumerate}
  \end{enumerate}

\end{example}

\subsection{Invariant measures of diffusions}\label{rem:edvikutu}
Recent papers \cite{EdVi12,KuTu11,kusuoka2013extension} provide a
general framework for performing Stein's method with respect to
densities $p$ which are supposed to admit a variance and be
continuous (with respect to the Lebesgue measure), 
 bounded with open interval support. Specifically, \cite{KuTu11}
suggest studying operators of the form
\begin{equation}\label{eq:39}
    \mathcal{A}_Xg(x) = \frac{1}{2}\beta(x)g'(x) + \gamma(x) g(x) 
  \end{equation}
  with $\gamma\in L^1(\mu)$ a continuous function with strictly one
  sign change on the support of $X$, negative on the right-most
  interval and such that $\gamma p$ is bounded and $\E [\gamma(X)] =
  0$,
  \begin{equation*}
    \beta(x) =  \frac{2}{p(x)}\int_a^x\gamma(y)p(y)dy,
  \end{equation*}
for $g \in \mathcal{F}(\mathcal{A}_X)$ the class of functions such
that $g \in C^1$ and 
\begin{equation*}
  \E |\gamma(X) g(X)|<+\infty \mbox{ and } \E
|\beta(X)g'(X)|<+\infty.
\end{equation*}
Then \cite{KuTu11} (see as well \cite{kusuoka2013extension} for an
extension) use 
diffusion {theory} to prove that
$\mathcal{A}_X$ are indeed Stein operators in the sense of the
Introduction (their approach falls within the generator approach).  In
our framework, \eqref{eq:39} is a particular case of \eqref{eq:22},
with ${f}= \beta/2 = \mathcal{T}_X^{-1}\gamma \in \mathcal{F}(X)$ and
$\gamma= \mathcal{T}_X{f}$ (which necessarily satisfies
$\E[\gamma(X)]=0$) and $\mathcal{F}(\mathcal{A}_X) = dom ((\cdot)', X,
{f})$.

\subsection{Gibbs measures on non-negative integers}\label{sec:gibbs-measures-non}
We can treat any discrete univariate distribution on non-negative
integers by writing it as a Gibbs measure
$$ \mu(x) = \frac{1}{\kappa} \exp(V(x)) \frac{\omega^x}{x!}, \quad x=0, 1, \ldots, N,$$
where $N \in \{0, 1, 2, \ldots\} \cup \{\infty\}$ and $\kappa$ is a
normalizing constant. Here the choice of $V$ and $\omega$ is not
unique. In \cite{eichelsbacher2008stein}, Stein's method for discrete
univariate Gibbs measures on non-negative integers is developed, with
operator
\begin{equation}
  \label{eq:41}
  \mathcal{A}_{\mu}(f)(x) =  f(x+1)\omega \exp{(V(x+1) - V(x))}  - x f(x)
\end{equation}
acting on the class of functions such that $f(0)=0$ and, in case $N$ is infinite,
$\lim_{x\to\infty}f(x)\exp(V(x)) \frac{\omega^x}{x!}=0$.
 The canonical
operator \eqref{eq:3} is (with $\cD = \Delta^+$) 
$${\mathcal{T}}_\mu f(x) =  f(x+1)\frac{\omega}{x+1} \exp{(V(x+1) - V(x))}  -  f(x)$$
which yields \eqref{eq:41} via \eqref{eq:9} using the pair  $(f(x),
g(x)) = (f(x), x+1)$. 
In \cite{eichelsbacher2008stein}, other choices of birth and death rates were
discussed; here the birth rate $b_x$ is the pre-factor of $g(x+1)$,
and the death rate $d_x$ is the pre-factor of $g(x)$.  Indeed any
choice of birth and death rates  which satisfy the detailed
balance conditions
$$\mu(x) b_x = \mu(x+1) d_{x+1}$$
for all $x$ are viable. Our canonical Stein operator can be written as 
$${\mathcal{T}}_\mu {g}(x) = \frac{b_x}{d_{x+1} }{g}(x+1)  -  {g}(x).$$
Choosing ${f}(x) =d_x$ and applying \eqref{eq:22} gives the general
Stein operator $ b_x {g}(x+1) - d_x {g}(x).$ The Stein kernel here is
$$ \tau(x) = \sum_{y=0}^x e^{V(y)-V(x)} \frac{x!}{y! w^{x-y}} (\nu - y)$$ 
with $\nu$ the mean of the distribution. This expression can be
simplified in special cases; for example in the Poisson case $V$ is
constant and we obtain $\tau(x) = w$, as before.
Similar developments are also considered by \cite{Ho04}.

\subsection{Higher order operators}
\label{sec:high-order-oper}
So far, in  all examples provided we only consider first-order
difference or differential operators. One way
of constructing higher order operators is to consider 
\begin{equation*}
  \mathcal{A}_Xf = \mathcal{T}_X(c \mathcal{D}^kf)
\end{equation*}
for $c$ well chosen and $\mathcal{D}^k$ the $k$th iteration of
$\mathcal{D}$. This approach is strongly connected with
Sturm-Liouville theory and will be the subject of a future
publication. Here we merely give  examples illustrating that our
results are not restricted to first-order operators. {The first example is the Kummer-$U$ distribution in Example \ref{ex:kummerv1}.}

Similar considerations as in Example 
\ref{ex:kummerv1} provide tractable operators for other distributions involving special
functions. 
\begin{example}[Variance Gamma distribution] Let $K_\nu$
be the modified Bessel function of the second kind, of index $\nu$. A
random variable has the  variance gamma distribution  $VG(\nu, \alpha,
\beta, \eta)$ if its density is given on $\R$ by
$$ p(x) = \frac{(\alpha^2 - \beta^2)^{\nu + \frac12}}{\sqrt{\pi}\Gamma\left(\nu + \frac12\right)} \left( \frac{|x - \eta|}{2 \alpha}\right)^\nu e^{\beta x} K_\nu(\alpha |x - \eta|),$$
where $\alpha >|\beta| > 0, \nu > - \frac12, \eta \in \R$. 
For simplicity we take $\eta = 0, \alpha =1, $ and $\nu > 0$. 
A generator for this distribution is 
\begin{equation}\label{eq:gauntss}
  \AAA f(x) = x f''(x) + (2 \nu + 1 + 2 \beta x) f'(x) + \{ (2 \nu + 1) \beta - (1 - \beta^2) x \} f(x),
\end{equation}
see  \cite{gaunt2014variance}. The canonical operator is (with
$\mathcal{D}$ the usual strong derivative)
\begin{equation*}
  \mathcal{T} (f) (x) = f'(x) + f(x) \left(  \frac{2 \nu}{x} + \beta
  \right) - \frac{K_{\nu+1}(x)}{K_\nu(x)}. 
\end{equation*}
Applying \eqref{eq:9} via the pair $(f, g) = (f, g(f))$ with 
with 
\begin{equation*}
g(f)(x) = x \frac{f'(x)}{f(x)} + x \left(  \beta  +
  \frac{K_{\nu+1}(x)}{K_\nu(x)} \right)
\end{equation*}
we retrieve \eqref{eq:gauntss}. 
\end{example}
\subsection{Densities satisfying a differential
  equation} \label{sec:distr-satisfy-diff}

Lastly we consider the case where the density of interest $p$ with
interval support $I = \left\{ a, b \right\}$ is defined as the
solution of some differential equation, say
\begin{equation*}
\mathcal{L}(p) = 0
\end{equation*}
along with some boundary conditions. Suppose that
$\mathcal{L}$ admits an adjoint (w.r.t. Lebesgue integration)
which we denote $\mathcal{L}^{\star}$ so that, for   $X \sim p$, we
can   apply  integration by parts to get 
\begin{align*} 
 0 &=   \int_a^b g(x) \mathcal{L}(p)(x) dx  = C_a^b(g,p) + \int_a^b
 \mathcal{L}^{\star}(g)(x) p(x) dx \\
& = C_a^b(g,p) + \E \left[  \mathcal{L}^{\star}(g)(X)\right]
\end{align*}
with $C_a^b(g, p)$ the constant arising through the integration by
parts. We define 
$
  \mathcal{A}_X(g) = \mathcal{L}^{\star}(g)
$
acting on the class $\mathcal{F}(\mathcal{A}_X)$ of sufficiently
smooth functions such
that $C_a^b(g, p)=0$. To qualify $\mathcal{A}_X$  as a
Stein operator in the sense of \eqref{steingen},  it still remains to
identify conditions on $g$ which ensure that this operator
characterises the density.

This point of view blends smoothly into our canonical approach to
Stein operators; we can moreover provide conditions on $g$ in a
generic way. To see this {fix a function $g$ of interest and} choose $f$ such that
\begin{equation*}
  \frac{(fp)'}{p} =  \mathcal{L}^{\star}(g)
\end{equation*}
{if such an $f$ exists}. Then, reversing the integration by parts argument
provided above, we get 
\begin{align*}
  f(x) & = \frac{1}{p(x)}\int_a^x  \mathcal{L}^{\star}(g)(u) p(u)du \\
  & = \frac{1}{p(x)}C_a^x(g, p) + \frac{1}{p(x)}\int_{a}^x g(u) \mathcal{L}(p)(u) du \\
  &= \frac{1}{p(x)}C_a^x(g, p) =:F(g, p)(x)
\end{align*}
with $\frac{1}{p(x)}C_a^x(g, p)$ the quantities resulting from the integration
by parts (and using the fact that now $\mathcal{L}(p)=0$, by
assumption). This leads to the standardization  
\begin{equation*}
  \mathcal{A}_X(g) = \mathcal{T}_X \left( F(g, p)  \right)
\end{equation*}
acting on the class of functions 
$
  \mathcal{F}(\mathcal{A}_X) = \left\{
  g \mbox{ such that } F(g, p) \in \mathcal{F}(X)
\right\}.
$
Note how, in particular, the assumption $F(g, p) \in \mathcal{F}(X)$
implies that $C_a^b(g, p)=0$, as demanded in the beginning of the
Section.

\begin{example}
We illustrate this point of view in the case of
the spectral density  $h_n$ on $[-2, 2]$ of a $GUE(n, 1/n)$ random matrix  studied
in \cite{GoTi06,haagerup2012asymptotic}.  This density is defined through the
third order differential equation 
\begin{equation*}
  \mathcal{L}(h_n)(x) =   \frac{1}{n^2} h_n'''(x) + (4-x^2)
  h_n'(x) + x h_n(x) = 0, x \in \R, 
\end{equation*}
along with a boundary condition.
Letting $X \sim h_n$  it is straightforward to show that 
\begin{equation*}
\mathcal{L}^{\star}(g)(x) = -\frac{1}{n^2}g'''(x) -((4-x^2)g(x))' + x g(x)
\end{equation*}
acting on the collection 
\begin{equation*}
\left\{ g \in C^3 \mbox{ such that }
    \left[ 
\frac{h_n''(x)g(x)- h_n'(x)g'(x) }{n^2}+h_n(x) g(x)
(4-x^2) 
\right]_{-2}^2 = 0\right\}.  
\end{equation*}
Integrating by parts we then get 
\begin{equation*}
 F(g, h_n)(x)   =  \frac{1}{n^2} \left( g''(x)-g'(x)
     \frac{h_n'(x)}{h_n(x)}+      \frac{h_n''(x)}{h_n(x)}g(x) 
 \right) + (4-x^2)g(x)  - \mathrm{c}
\end{equation*}
with $\mathrm{c}=g''(-2)
     +g'(-2) \frac{h_n'(-2)}{h_n(-2)} -
     \frac{h_n''(-2)}{h_n(-2)}g(-2)$. 
Considering only functions $g$ such that $ F(g, h_n) \in
\mathcal{F}(X)$   leads to a Stein operator for $X$.   
\end{example}



\section{{Distributional comparisons}}
\label{sec:grdistr-comp}

Resulting from our framework, in this Section we provide a general
``comparison of generators approach'' (Theorem \ref{theo:general-1})
which provides bounds on the probability distance between univariate
distributions in terms of their Stein operators.  This result is
formal and abstract; it is our take on a general version of Part B of
Stein's method.  Specific applications to concrete distributions will
be deferred to Section~\ref{sec:examples-again}.

 \subsection{Comparing Stein operators} \label{sec:comparing-operators}

 Let $(\cX_1, \mathcal{B}_1, \mu_1)$ and $(\cX_2, \mathcal{B}_2,
 \mu_2)$ be two measure spaces as in Section~\ref{sec:setup}.  Let
 $X_1$ and $X_2$ be two random variables on $\cX_1$ and $\cX_2$,
 respectively, and suppose that their respective densities $p_1$ and
 $p_2$ have interval support.
Let $\mathcal{D}_1$ and $\mathcal{D}_2$ be two linear operators acting
on $\cX_1$ and $\cX_2$ and satisfying Assumption~\ref{ass:prodrule}
(with $l_1$ and $l_2$, respectively) and Assumption~\ref{ass:ker}.  Denote by
$\mathcal{T}_1$ and $\mathcal{T}_2$ the Stein operators associated
with $(X_1, \cD_1)$ and $(X_2, \cD_2)$, acting on Stein classes
$\mathcal{F}_1=\mathcal{F}(X_1)$ and $\mathcal{F}_2=\mathcal{F}(X_2)$,
respectively.  Finally let $\E_ih = \E h(X_i)$ denote the expectation
of a function $h$ under the measure $p_i d\mu$, $i=1,2$.

The framework outlined in Section~\ref{sec:gener-vers-steins}
(specifically Section \ref{sec:stein-oper-stein}) is
tailored for the following result to hold. 
\begin{theorem}\label{theo:general-1}
Let $h$ be a function such that
$\E_i|h|<\infty$ for $i=1,   2$. 
  \begin{enumerate}
  \item Let $(f,g) $ with $f \in \mathcal{F}_1$ and $g\in dom
    (\mathcal{D}_1, X_1, f)$ solve the $X_1$-Stein equation
    \eqref{eq:genstequationfg} for $h$. Then
\begin{align}
  \label{eq:realstart}
\E_2h - \E_1h &  =  \E_2 \left[
  f (X_2)\cD_1^{\star}g(X_2)- g(X_2)\mathcal{T}_1f  (X_2)  
\right] .
\end{align}
  \item Fix $f_1 \in \mathcal{F}_1$ and define the function  $
    g_h:=\frac{1}{f_1}
  \mathcal{T}_1^{-1} (h - \E_1 h)$. Then 
\begin{align}
  \label{eq:start}
\E_2h - \E_1h &  = \E_2 \left[
  f_1(\cdot)\cD_1^{\star}g_h(\cdot)-f_2(\cdot)\cD_2^{\star}g_h(\cdot)
\right. \\ 
& \quad \quad \quad + \left. 
   g_h(\cdot)\mathcal{T}_1f_1(\cdot)-    g_h(\cdot)\mathcal{T}_2f_2(\cdot)  \right]\nonumber
\end{align}
for all $f_2\in \mathcal{F}_2$ such that $g_h \in dom(\cD_2,X_2,f_2)$.

\item  Fix $g_1 \in dom(\mathcal{D}_1, X_1)$ and define the function
  $ f_h:=\frac{1}{g_1}
  \mathcal{T}_1^{-1} (h - \E_1 h)$.  If $f_h \in   
 \mathcal{F}_1\cap\mathcal{F}_2$
then 
\begin{align}
  \label{eq:start2}
\E_2h - \E_1h &  = \E_2 \left[
  f_h(\cdot)\cD_1^{\star}g_1(\cdot)-f_h(\cdot)\cD_2^{\star}g_2(\cdot)
\right. \\ 
& \quad \quad \quad + \left. 
   g_1(\cdot)\mathcal{T}_1f_h(\cdot)-    g_2(\cdot)\mathcal{T}_2f_h(\cdot)  \right].\nonumber
\end{align}
for all $g_2 \in
dom(\mathcal{D}_2, X_2)$.
  \end{enumerate}
\end{theorem}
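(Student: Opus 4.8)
The plan is to derive all three identities from a single source: the Stein product rule \eqref{steindiff} together with the basic skew-adjointness relation \eqref{eq:basicstein}, applied once to the target $X_1$ (to construct the solution of the Stein equation) and once to $X_2$ (to annihilate the $X_2$-Stein operator under $\E_2$). The common strategy is: write $h-\E_1 h$ as the image under $\mathcal{T}_1$ of some function in $\mathcal{F}(X_1)$; evaluate this at $X_2$ and take $\E_2$; then rewrite the right-hand side using the product rule for $X_2$ and the fact that $\E_2[\mathcal{T}_2(\text{anything in } \mathcal{F}(X_2))]=0$.

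First I would prove part 1. By Theorem~\ref{sec:steindiff} (Stein product rule) the pair $(f,g)$ solving \eqref{eq:genstequationfg} satisfies $f(\cdot)\cD_1^{\star}g(\cdot)+g(\cdot)\mathcal{T}_1 f(\cdot)=h-\E_1 h$ as functions on $\mathcal{I}_1$, which we may assume contains the support of $X_2$ (or restrict to the intersection as in Remark after Theorem~\ref{th:generalsetting}). Replacing the dummy variable by $X_2$ and taking $\E_2$ gives
\[
\E_2 h-\E_1 h=\E_2\bigl[f(X_2)\cD_1^{\star}g(X_2)+g(X_2)\mathcal{T}_1 f(X_2)\bigr],
\]
which is \eqref{eq:realstart} up to the sign convention in the statement (one absorbs $-1$ into $\cD_1^\star$ versus $\mathcal{T}_1$ depending on how the product rule is oriented); this is a purely bookkeeping matter. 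No integrability issue beyond $\E_i|h|<\infty$ and the hypothesis $g\in dom(\mathcal{D}_1,X_1,f)$ arises here.

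For part 2, I would specialize: fix $f_1\in\mathcal{F}_1$ and set $g_h=\tfrac{1}{f_1}\mathcal{T}_1^{-1}(h-\E_1 h)$, so that $f_1 g_h=\mathcal{T}_1^{-1}(h-\E_1 h)$, i.e.\ $\mathcal{T}_1(f_1 g_h)=h-\E_1 h$ by Definition~\ref{def:invstop}; here I should be slightly careful about the shift $l_1$ — the product rule actually pairs $f_1(\cdot)$ with $g_h(\cdot+l_1)$, so strictly one works with $\tilde g_h(\cdot)=g_h(\cdot+l_1)$ and unwinds at the end, exactly as in \eqref{eq:steqsolsol}. Then part 1 gives
\[
\E_2 h-\E_1 h=\E_2\bigl[f_1(X_2)\cD_1^{\star}g_h(X_2)+g_h(X_2)\mathcal{T}_1 f_1(X_2)\bigr].
\]
Now for any $f_2\in\mathcal{F}_2$ with $g_h\in dom(\mathcal{D}_2,X_2,f_2)$, the product rule for $X_2$ (Theorem~\ref{sec:steindiff}) plus $\E_2[\mathcal{T}_2 f_2(\cdot)g_h(\cdot+l_2)]=0$ (which holds because $f_2(\cdot)g_h(\cdot+l_2)\in\mathcal{F}(X_2)$, cf.\ the definition of $dom(\mathcal{D}_2,X_2,f_2)$) yield $0=\E_2[f_2(X_2)\cD_2^{\star}g_h(X_2)+g_h(X_2)\mathcal{T}_2 f_2(X_2)]$. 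Subtracting this zero from the previous display gives \eqref{eq:start}. Part 3 is identical with the roles of the fixed function swapped: fix $g_1\in dom(\mathcal{D}_1,X_1)$, set $f_h=\tfrac{1}{g_1}\mathcal{T}_1^{-1}(h-\E_1 h)$ so $\mathcal{T}_1(f_h g_1)=h-\E_1 h$, apply part 1, and subtract the vanishing $\E_2[f_h(X_2)\cD_2^{\star}g_2(X_2)+g_2(X_2)\mathcal{T}_2 f_h(X_2)]$ using $f_h\in\mathcal{F}_2$ and $g_2\in dom(\mathcal{D}_2,X_2)$.

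The main obstacle is not conceptual but clerical: keeping the shifts $l_1,l_2$ and the sign in the skew-adjoint pairing straight, and verifying that every expectation that is asserted to vanish genuinely does so — i.e.\ that the membership hypotheses ($g_h\in dom(\cD_2,X_2,f_2)$, resp.\ $f_h\in\mathcal{F}_1\cap\mathcal{F}_2$ and $g_2\in dom(\cD_2,X_2)$) are exactly what is needed for $\E_2[\mathcal{T}_2(\cdot)]=0$ and for all the $\E_2$-expectations on the right to be finite. In the continuous ($l=0$) setting — which is where the applications in Section~\ref{sec:examples-again} live — these shifts disappear and the argument is transparent; I would likely present the proof in that language and remark that the general-$l$ case follows by carrying the shift through \eqref{eq:steqsolsol} verbatim.
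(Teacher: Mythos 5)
Your proposal follows the paper's own proof essentially verbatim: evaluate the $X_1$-Stein equation \eqref{eq:genstequationfg} at $X_2$, take $\E_2$, then subtract the vanishing $\E_2[f_2\cD_2^\star g_h + g_h\mathcal{T}_2 f_2]$ (resp.\ the analogous expression in part 3). One small clarification: the minus sign in \eqref{eq:realstart} is a typo in the paper's statement rather than a sign convention to absorb — the correct sign, consistent with \eqref{eq:genstequationfg}, \eqref{eq:start}, \eqref{eq:24}, and the paper's own proof, is $+$, exactly as your derivation produces.
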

\begin{remark}
  \label{rem:takingf0} Our approach contains the classical ``direct''
  approach described in the Introduction (see \eqref{eq:firstepte}).
  Indeed, if allowed, one can take $f_1 = 1$ and $f_2=0$ in
  \eqref{eq:start} to get
  \begin{equation*}
    \E_2h - \E_1h = \E_2 \left[ \mathcal{D}_1^{\star}g_h(\cdot) +
      u_1(\cdot) g_h(\cdot) \right]  \end{equation*}
with $u_1$ the score of $X_1$ (defined in  \eqref{eq:10}) and $g_h$
now the usual solution of the Stein equation. This 
yields the bound 
\begin{equation*}
  d_{\mathcal{H}}(X_1, X_2) \le \sup_{\mathcal{H}} \left| \E_2 \left[ \mathcal{A}(g_h)(X_2) \right]\right|
\end{equation*}
with $\mathcal{A}(g_h) = \mathcal{D}_1^{\star}g_h + u_1
g_h$. In this case one does not
need to calculate $\mathcal{T}_2$.
\end{remark}
\begin{proof}
  The starting point is the Stein equation \eqref{eq:genstequationfg}
  which, in the current context, becomes
  \begin{equation}\label{eq:18}
    h(x) - \E h(X_\bullet) = f(x) \mathcal{D}_\bullet^{\star}g(x) + g(x)
    \mathcal{T}_\bullet f(x) = \frac{\mathcal{D}_\bullet \left( f
        g p_\bullet) \right)}{p_\bullet}(x) 
  \end{equation}
  with $\bullet \in \left\{ 1, 2 \right\}$.  Solutions of this
  equation are pairs of functions $(f, g)$ with $f \in
  \mathcal{F}(X_\bullet)$ and $g\in dom (\mathcal{D}_\bullet,
  X_\bullet, f)$. Using $\bullet=1$, replacing $x$ by $X_2$ and taking
  expectations gives \eqref{eq:realstart}.

For \eqref{eq:start}, first fix $f_1 \in \mathcal{F}_1$ and choose $g = g_h$ the
corresponding solution of \eqref{eq:18} with $\bullet = 1$.  By
construction we can then take
expectations and write
\begin{equation*}
  \E h(X_2)  - \E h(X_1) = \E \left[ f_1(X_2)
    \mathcal{D}_1^{\star}g_h(X_2) + g_h(X_2) 
    \mathcal{T}_1 f_1(X_2) \right]
\end{equation*}
because $h \in L^1(X_1) \cap L^1(X_2)$.  
Finally we know that for  all $f_2 \in \mathcal{F}_2$ such that $g_h \in
dom(\mathcal{D}_2, X_2, f_2)$ we
can  use \eqref{eq:18}
with $\bullet = 2$ to get 
\begin{align*}
  \E\left[ f_2(X_2) \mathcal{D}_2^{\star}g_h(X_2) + g_h(X_2)
    \mathcal{T}_2 f_2(X_2) \right]  = 0. 
\end{align*}
Taking differences we get \eqref{eq:start}. Equation  \eqref{eq:start2}
follows in a similar fashion, fixing this time $f = f_h$ and letting
$g_1$ and $g_2$ vary. 
\end{proof}

The power of Theorem \ref{theo:general-1} and of Stein's method in
general lies in the freedom of choice on the r.h.s. of the identities
: all functions $f_\bullet, g_{\bullet}$ (where now $\bullet$ needs to
be replaced by $h, 1$ or $2$ according to which of \eqref{eq:start} or
\eqref{eq:start2} is used) can be chosen so as to optimise resulting
bounds. We can even optimise the bounds over all suitable pairs
$(f,g)$.  We will discuss two particular choices of functions in
Section \ref{sec:comp-stein-kern} which lead to well-known Stein
bounds. We also will provide illustrations (discrete
vs discrete, continuous vs continuous and discrete vs continuous) in
Section \ref{sec:examples-again}.

In particular \eqref{eq:start} and \eqref{eq:start2} provide tractable
(and still very general) versions of \eqref{eq:firstepte2}. Indeed
taking suprema over all $h \in \mathcal{H}$ some suitably chosen class
of functions we get, in the notations of the Introduction,
\begin{equation*}
  d_{\mathcal{H}}(X_1, X_2)=\sup_{h\in\mathcal{H}}|\E_2 h-\E_1h| \le
A_1 + A_2
\end{equation*}
with $$A_1 = A_1(\mathcal{H}) = \sup_{h \in \mathcal{H}}\left| \E_2
  \left[
    f_\bullet(\cdot)\cD_1^{\star}g_\bullet(\cdot)-f_\bullet(\cdot)\cD_2^{\star}g_\bullet(\cdot)
  \right] \right| $$ and $$A_2 = A_2(\mathcal{H}) = \sup_{h \in
  \mathcal{H}} \left| \E_2 \left[
    g_\bullet(\cdot)\mathcal{T}_1f_\bullet(\cdot)-
    g_\bullet(\cdot)\mathcal{T}_2f_\bullet (\cdot)\right] \right|
.$$  Different choices of functions $f_1$ and $f_2$ (resp. $g_1$ and
$g_2$) will lead to different expressions bounding all distances
$d_{\mathcal{H}}(X_1, X_2)$ in terms of properties of $\mathcal{T}_1$
and {$\mathcal{T}_2$}. 
\begin{remark}
  If there exist no functions $f_1, f_2$ (resp. $g_1$, $g_2$) such
  that the assumptions are satisfied, then the claims of
  Theorem~\ref{theo:general-1} are void. Such is not the case whenever {$p_1$ and $p_2$}
  are ``reasonable''.
\end{remark}


\begin{remark}[About the Stein factors]
  In view of \eqref{eq:start} and \eqref{eq:start2}, good
bounds on $\E_1h - \E_2h$ will depend on the control we have on
functions 
\begin{equation}
  \label{eq:gensol}
  g_h = \frac{\mathcal{T}_1^{-1} \left( h-\E_1 h \right)}{f_1} \mbox{
    and/or  }   f_h = \frac{\mathcal{T}_1^{-1} \left( h-\E_1 h \right)}{g_1}.
\end{equation}
Bounds on these functions and on their derivatives are called
dedicated literature,
\emph{Stein (magic) factors} (see for example
\cite{daly2008upper,RO12}).  There is an important connection between
such constants and Poincar\'e / variance bounds / spectral gaps, as
already noted for example in
\cite{Ch80,K85,MR2128239,lefevre2002generalized,MR2527030}. This
connection is quite transparent in our framework and will be explored
in future publications.
\end{remark}

In the sequel we will not use the full freedom of choice provided by
Theorem~\ref{theo:general-1}, but rather focus on applications of
identity \eqref{eq:start} only. Indeed in this case much is known
about $\|g_h\|$ and $\| \mathcal{D}g_h\|$ in case $f_1=1$ and $X_1$ is
Gaussian (see \cite{ChGoSh11}), Binomial (see \cite{ehm1991binomial}),
Poisson (see \cite{BaHoJa92}), Gamma (see \cite{ChFuRo11,NP11}), etc.
See also \cite{EdVi12,KuTu11,Do14,LS12a} for computations under quite
general assumptions on the density of $X_1$.  We will make use of
these results in Section \ref{sec:examples-again}.  It is hopeless {to} 
wish for useful bounds on \eqref{eq:gensol} in all generality (see
also the discussion in \cite{arras2016stein}).  Of course one could
proceed as in \cite{Do14,CS11} or \cite{LS12a} by imposing \emph{ad
  hoc} assumptions on the target density which ensure that the
functions in \eqref{eq:gensol} have good properties.  Such approaches
are not pursued in this paper.  Specific bounds will therefore only be
discussed in particular examples.

\subsection{Comparing Stein kernels and score functions}\label{sec:comp-stein-kern}
There are two obvious ways to exploit \eqref{eq:start}, namely either
by trying to make the first summand equal zero, or by trying to make
the second summand equal zero. In the rest of this section we do just
that, in the case $\mathcal{X}_1 = \mathcal{X}_2$ and $\mathcal{D}_1 =
\mathcal{D}_2= \mathcal{D}$ (and hence $l_1=l_2=l$); extension of this
result to mixtures is straightforward.
 

Cancelling the first term in \eqref{eq:start} and ensuring that all
resulting assumptions are satisfied immediately leads to the following
result.

\begin{corollary}\label{cor:compscor}
  Let $\mathcal{H}\subset L^1(X_1) \cap L^1(X_2)$.   Take $f \in
  \mathcal{F}_1\cap \mathcal{F}_2$ and suppose that
  $(1/f)\mathcal{T}_1^{-1}(h-\E_1h) \in dom(\mathcal{D}, X_1, f) \cap
  dom(\mathcal{D}, X_2, f)$   for all $h \in \mathcal{H}$. Then
  \begin{equation}
    \label{eq:boundT1T2}
     \sup_{h\in \mathcal{H}} | \E_1 h - \E_2 h | \le
     \kappa_{\mathcal{H}, 1}(f)   \E_2 | \mathcal{T}_1f - \mathcal{T}_2f|
  \end{equation}
with $\kappa_{\mathcal{H}, 1}(f) = \sup_{h \in \mathcal{H}} \| (1/f)\,\mathcal{T}_1^{-1}(h-\E_1h) \|_{\infty}$.
\end{corollary}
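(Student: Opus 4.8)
The plan is to specialise identity \eqref{eq:start} of Theorem \ref{theo:general-1} to the situation $\mathcal{X}_1 = \mathcal{X}_2$, $\mathcal{D}_1 = \mathcal{D}_2 = \mathcal{D}$ and $f_1 = f_2 = f$, and then to bound the resulting single expectation crudely. First I would fix $h \in \mathcal{H}$ and set $g_h := (1/f)\,\mathcal{T}_1^{-1}(h - \E_1 h)$; by hypothesis $f \in \mathcal{F}_1 \cap \mathcal{F}_2$ and $g_h \in dom(\mathcal{D}, X_1, f) \cap dom(\mathcal{D}, X_2, f)$, so the assumptions of part~2 of Theorem~\ref{theo:general-1} hold with the choice $f_1 = f$, and $f_2 = f$ is an admissible choice there since $g_h \in dom(\mathcal{D}, X_2, f)$. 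Plugging these into \eqref{eq:start} gives
\begin{align*}
\E_2 h - \E_1 h &= \E_2\left[ f\,\mathcal{D}^{\star}g_h - f\,\mathcal{D}^{\star}g_h + g_h\,\mathcal{T}_1 f - g_h\,\mathcal{T}_2 f \right] \\
&= \E_2\left[ g_h\left(\mathcal{T}_1 f - \mathcal{T}_2 f\right) \right],
\end{align*}
the first two summands cancelling because $\mathcal{D}_1^{\star} = \mathcal{D}_2^{\star} = \mathcal{D}^{\star}$.

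Next I would take absolute values and apply the triangle inequality for integrals followed by the trivial $L^{\infty}$--$L^{1}$ H\"older bound:
\[
\left| \E_2 h - \E_1 h \right| \le \E_2\left[ |g_h|\,|\mathcal{T}_1 f - \mathcal{T}_2 f| \right] \le \| g_h \|_{\infty}\; \E_2\left[ |\mathcal{T}_1 f - \mathcal{T}_2 f| \right].
\]
Since $g_h = (1/f)\,\mathcal{T}_1^{-1}(h - \E_1 h)$ we have $\|g_h\|_{\infty} = \|(1/f)\,\mathcal{T}_1^{-1}(h - \E_1 h)\|_{\infty}$, and taking the supremum over $h \in \mathcal{H}$ turns this into \eqref{eq:boundT1T2} with the constant $\kappa_{\mathcal{H},1}(f) = \sup_{h\in\mathcal{H}}\|(1/f)\,\mathcal{T}_1^{-1}(h-\E_1 h)\|_{\infty}$.

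The only points needing genuine care — and hence the main (modest) obstacle — are the bookkeeping ones: one must check that $g_h \in dom(\mathcal{D}, X_1, f)$ does guarantee finiteness of the $X_1$-side expectation used to invoke part~2 of Theorem~\ref{theo:general-1}, that $g_h \in dom(\mathcal{D}, X_2, f)$ is exactly what makes $f_2 = f$ legitimate there, and that $\mathcal{T}_1 f - \mathcal{T}_2 f$ is $\E_2$-integrable (which is automatic once $\|g_h\|_{\infty} < \infty$ and the left-hand side is finite, or else should be read into the standing hypotheses on $f$ and $\mathcal{H}$). Once these verifications are in place the corollary follows immediately, with no further estimation required.
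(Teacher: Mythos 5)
Your proof is correct and follows exactly the route the paper has in mind: the paper explicitly introduces Corollary~\ref{cor:compscor} by remarking that it follows from ``cancelling the first term in \eqref{eq:start}'' (i.e.\ taking $f_1=f_2=f$ with $\mathcal{D}_1=\mathcal{D}_2$) and then applying the obvious $L^\infty$--$L^1$ bound. Your bookkeeping of the domain conditions matches the hypotheses stated in the corollary, so there is nothing to add.
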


\begin{remark}
\begin{enumerate}
\item 
If the constant function
$1 \in \FFF_1 \cap \FFF_2$, then we can take  
$f=1$ in \eqref{eq:boundT1T2} to deduce that 
\begin{equation*}
  d_{\mathcal{H},1}(X_1, X_2) \le \kappa_{\mathcal{H}, 1}(1) \E_{2} \left|
    u_1 -  u_2 \right|\leq \kappa_{\mathcal{H}, 1} \sqrt{\E_{2} \left[ \left(
    u_1 -  u_2 \right)^2 \right]},
\end{equation*}
with $u_i = \mathcal{T}_i(1)$ the score function of $X_i$ (defined in
\eqref{eq:10}) and $\kappa_{\mathcal{H},1}$ an explicit constant that
can be computed in several important cases, see \mbox{e.g.}
\cite[Section 4]{LS12a} and  \cite{Sh75,MR2128239} for
applications in the Gaussian case. Note that $\mathcal{J}(X_1, X_2) =
\E_{2} \left[ (u_1 - u_2)^2 \right]$ is the so-called generalized
Fisher information distance (see e.g. \cite{Jo04,LS12a}).
\item
The assumption that $f \in
  \mathcal{F}_1\cap \mathcal{F}_2$ can be relaxed; if $\int_I D_2(fp_2) d\mu \ne 0$ then this just adds terms which relate to the boundaries of $I$. 
\end{enumerate}
\end{remark} 



Cancelling the second term in \eqref{eq:start} and ensuring that all
resulting assumptions are satisfied immediately leads to the following
result.

\begin{corollary}\label{cor:compstfac}
  Let $\mathcal{H}\subset L^1(X_1)\cap L^1(X_2)$.   Take $\omega \in
   Im(\mathcal{T}_1)\cap Im(\mathcal{T}_2)$ such that
   $\mathcal{T}_1^{-1}(h-\E_1h)/ \cT_1^{-1}(\omega)\in
   dom(\cD,X_1,\cT_1^{-1}(\omega)) \cap    dom(\cD,X_2,\cT_2^{-1}(\omega))$. Then 
  \begin{equation}\label{eq:stefact} 
    | \E_1 h - \E_2 h | \le \kappa_{\mathcal{H}, 2}(\omega)
     \E_2 | \cT_1^{-1}(\omega)-\cT_2^{-1}(\omega)|
  \end{equation}
with $\kappa_{\mathcal{H}, 2}(\omega) = \sup_{h \in \mathcal{H}} \|
 \cD\left(\mathcal{T}_1^{-1}(h-\E_1h) / \cT_1^{-1}(\omega)  \right)\|_{\infty}$.
\end{corollary}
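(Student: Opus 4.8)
The plan is to mimic exactly the derivation of Corollary \ref{cor:compscor}, but this time cancelling the \emph{second} summand of identity \eqref{eq:start} rather than the first. Concretely, I would start from Theorem \ref{theo:general-1}(2) with $X_1$ playing the role of the reference distribution, but instead of fixing $f_1$ directly I would parametrise the choice through $\omega$: given $\omega \in Im(\mathcal{T}_1)\cap Im(\mathcal{T}_2)$, set $f_1 = \mathcal{T}_1^{-1}(\omega)$ and $f_2 = \mathcal{T}_2^{-1}(\omega)$. Then by definition of the inverse Stein operators, $\mathcal{T}_1 f_1 = \omega = \mathcal{T}_2 f_2$, so that in \eqref{eq:start} the two terms $g_h(\cdot)\mathcal{T}_1 f_1(\cdot)$ and $g_h(\cdot)\mathcal{T}_2 f_2(\cdot)$ are identically equal and their difference vanishes pointwise. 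With the notation $g_h := \mathcal{T}_1^{-1}(h-\E_1 h)/f_1 = \mathcal{T}_1^{-1}(h-\E_1h)/\mathcal{T}_1^{-1}(\omega)$ as in the statement, what survives of \eqref{eq:start} is precisely
\begin{equation*}
\E_2 h - \E_1 h = \E_2\left[ f_1(\cdot)\cD^{\star}g_h(\cdot) - f_2(\cdot)\cD^{\star}g_h(\cdot) \right] = \E_2\left[ \left(\mathcal{T}_1^{-1}(\omega) - \mathcal{T}_2^{-1}(\omega)\right)\cD^{\star}g_h \right].
\end{equation*}

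Next I would take absolute values, pull the supremum norm of $\cD^{\star}g_h$ out of the expectation via the trivial bound $|\E_2[AB]| \le \|B\|_\infty \, \E_2|A|$ applied with $A = \mathcal{T}_1^{-1}(\omega) - \mathcal{T}_2^{-1}(\omega)$ and $B = \cD^{\star}g_h$, and then take the supremum over $h \in \mathcal{H}$. Since $\cD^{\star}g_h = \cD\big(\mathcal{T}_1^{-1}(h-\E_1h)/\mathcal{T}_1^{-1}(\omega)\big)$ — here I would note that $\cD^{\star}$ acting on $g_h$ produces exactly the quantity inside $\kappa_{\mathcal{H},2}(\omega)$; more carefully, the relevant term in \eqref{eq:start} is $f_1 \cD_1^{\star} g_h$, and with $\cD_1 = \cD$ the relevant derivative object is $\cD$ of the ratio, matching the definition $\kappa_{\mathcal{H},2}(\omega) = \sup_{h\in\mathcal{H}}\|\cD(\mathcal{T}_1^{-1}(h-\E_1h)/\mathcal{T}_1^{-1}(\omega))\|_\infty$ — this yields exactly \eqref{eq:stefact}.

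The routine but necessary step is to check that the hypotheses of Theorem \ref{theo:general-1}(2) are genuinely met under the stated assumption $\mathcal{T}_1^{-1}(h-\E_1h)/\cT_1^{-1}(\omega) \in dom(\cD,X_1,\cT_1^{-1}(\omega)) \cap dom(\cD,X_2,\cT_2^{-1}(\omega))$: one needs $f_1 = \mathcal{T}_1^{-1}(\omega) \in \mathcal{F}_1$ (automatic, since $\mathcal{T}_1^{-1}$ maps into $\mathcal{F}(X_1)$ by Definition \ref{def:invstop}), $g_h \in dom(\cD_1, X_1, f_1)$, and $f_2 = \mathcal{T}_2^{-1}(\omega) \in \mathcal{F}_2$ with $g_h \in dom(\cD_2, X_2, f_2)$ — all of which are packaged into the membership hypothesis on the ratio, precisely because $dom(\cD, X_i, \cT_i^{-1}(\omega))$ is the class encoding exactly these integrability and boundary conditions. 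So the proof is short: substitute $f_1 = \mathcal{T}_1^{-1}(\omega)$, $f_2 = \mathcal{T}_2^{-1}(\omega)$ into \eqref{eq:start}, observe the second bracket vanishes, bound the first by Hölder with the $\infty$--$1$ pairing, and take the supremum over $\mathcal{H}$.

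I do not anticipate a serious obstacle; the only point requiring a little care is the bookkeeping of which $\cD^{\star}$/$\cD$ appears where (the operator $\cD^{\star}$ on $g$ versus the $\cD$ inside the definition of $\kappa_{\mathcal{H},2}$), and verifying that the membership hypothesis in the statement really does supply \emph{all} the side conditions demanded by Theorem \ref{theo:general-1}(2) — in particular that $\omega \in Im(\mathcal{T}_1)\cap Im(\mathcal{T}_2)$ is exactly what is needed for both $\mathcal{T}_1^{-1}(\omega)$ and $\mathcal{T}_2^{-1}(\omega)$ to be well-defined elements of $\mathcal{F}_1$ and $\mathcal{F}_2$ respectively. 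This mirrors the remark following Corollary \ref{cor:compscor} that the $f \in \mathcal{F}_1 \cap \mathcal{F}_2$ condition there can be relaxed at the cost of boundary terms; here the analogous role is played by $\cT_1^{-1}(\omega)$ and $\cT_2^{-1}(\omega)$ both being \emph{bona fide} inverse images.
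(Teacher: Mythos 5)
Your proposal is correct and matches the paper's (unstated) argument: the paper merely remarks that the corollary follows ``immediately'' from cancelling the second summand in \eqref{eq:start}, and you carry out exactly that — set $f_1=\mathcal{T}_1^{-1}(\omega)$, $f_2=\mathcal{T}_2^{-1}(\omega)$ so that $\mathcal{T}_1f_1=\mathcal{T}_2f_2=\omega$, observe the second bracket vanishes, bound the first bracket by Hölder, and take the supremum over $\mathcal{H}$. Your flag about the $\cD$ versus $\cD^{\star}$ bookkeeping is apt: the surviving term naturally carries $\cD^{\star}g_h$, whereas the paper writes $\cD$ inside $\kappa_{\mathcal{H},2}$; this is harmless in the settings used (Lebesgue with $\cD=\cD^\star$, or forward/backward differences whose sup norms agree), but strictly speaking the constant should read $\sup_h\|\cD^{\star}g_h\|_\infty$.
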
 
 
If, moreover, $X_1$ and $X_2$ have common finite mean $\nu$ then one can choose $\omega(x) =
\nu-x$ 
 in \eqref{eq:stefact} to get 
\begin{equation}
  \label{eq:6}
   | \E_1 h - \E_2 h | \le \kappa_{\mathcal{H}, 2}
     \E_2 |\tau_1-\tau_2|
\end{equation}
with $\tau_j$, $j=1, 2$, the Stein kernel of $X_j$ (defined in
\eqref{eq:15}) and $\kappa_{\mathcal{H}, 2}$ an explicit constant that
can be computed in several cases. In \cite{CPP01}, and references
therein cited, consequences of \eqref{eq:6} are explored in quite some
detail. In particular in the Gaussian and central Gamma cases,
\eqref{eq:6} has been exploited fruitfully in conjunction with
Malliavin calculus, leading to an important new stream of research
known as ``Nourdin-Peccati analysis'',
see \cite{NP11,NoPe09}. See also aforementioned references
\cite{kusuoka2013extension,KuTu11,EdVi12} where several extensions of
the Nourdin-Peccati analysis are discussed. Note that, in the Gaussian
case $X_1\sim \mathcal{N}(0, 1)$ we readily obtain $\tau_1 = 1$. The
quantity 
\begin{equation}
  \label{eq:38}
  S(X) = \sqrt{ \E \left[\left( 1-\tau_2 \right)^2\right]}
\end{equation}
is the   \emph{Stein discrepancy} from
\cite{nourdin2013entropy,ledoux2014stein}.

\subsection{Sums of {independent} random variables and 
  the Stein kernel}\label{sec:sum}

We begin by  relaxing the definition of Stein kernel. This approach is
similar to that advocated in \cite{nourdin2013integration}. 
\begin{definition}
  Let $\mathcal{X}$ be a set and $\mathcal{D}$ a linear operator
  acting on $\mathcal{X}^{\star}$ satisfying the Assumptions of
  Section \ref{sec:setup}. Let $X\sim p$ have mean $\nu$ and
  $\mathcal{D}$-Stein pair $(\mathcal{T}_X, \mathcal{F}(X))$. A random
  variable $\tau_X(X)$ is a $\mathcal{D}$-Stein kernel for $X$ if it
  is measurable in $X$ and if
  \begin{equation}\label{eq:51}
\E \left[ \tau(X) \mathcal{D}^{\star}g(X-l) = \E \left[ (X- \nu)
        g(X)\right] \right]
    \end{equation}
    for all $g \in dom(\mathcal{D}, X, \tau)$.  If, moreover,
    $dom(\mathcal{D}, X, \tau)$ is dense in $L^1(\mu)$ then the
    Stein kernel is unique.
\end{definition}
  Applying \eqref{eq:8} one immediately sees that $\mathcal{T}_p^{-1}(Id
- \nu)$ is a Stein kernel for $X$. 
\begin{proposition} If $\mathcal{D}^{\star}$ satisfies a chain rule
  $\mathcal{D}^{\star}f(ax) = a \mathcal{D}^{\star}_af(x)$
for some operator $\mathcal{D}^{\star}_a$ satisfying the same
assumptions as $\mathcal{D}$ but now on $a \mathcal{X}$ then 
\begin{equation}
  \label{eq:52}
  \tau_{aX}(aX) = a^2 \tau_X(X)
\end{equation}
is a Stein kernel for $aX$. 
\end{proposition}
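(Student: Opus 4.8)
The plan is to verify the defining relation \eqref{eq:51} directly for the random variable $Y:=aX$ on $a\mathcal{X}$, equipped with the operator pair $(\mathcal{D}_a,\mathcal{D}^{\star}_a)$ and shift constant $l_a$, taking $a^{2}\tau_X(X)$ as the candidate kernel. We may assume $a\neq 0$, so that $\sigma(aX)=\sigma(X)$ and hence $a^{2}\tau_X(X)$ is measurable with respect to $aX$; moreover $Y$ has mean $a\nu$.

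First I would pass from test functions for $Y$ to test functions for $X$. Given $g\in dom(\mathcal{D}_a,aX,\tau_{aX})$, set $\tilde{g}(x):=g(ax)$ for $x\in\mathcal{X}$. The chain-rule hypothesis, which I read as $\mathcal{D}^{\star}\bigl(g(a\,\cdot)\bigr)(x)=a\,(\mathcal{D}^{\star}_ag)(ax)$, together with the compatibility $a(X-l)=aX-l_a$ of the two shift constants (immediate when $l=0$, and as in Example~\ref{ex:discmgrid} on a lattice), yields
\begin{equation*}
  \mathcal{D}^{\star}\tilde{g}(X-l)=a\,\bigl(\mathcal{D}^{\star}_ag\bigr)(aX-l_a).
\end{equation*}
A routine verification shows that $g\mapsto\tilde{g}$ maps $dom(\mathcal{D}_a,aX,\tau_{aX})$ into $dom(\mathcal{D},X,\tau_X)$: the integrability and boundary requirements are preserved because expectations under the law of $X$ and under the law of $aX$ correspond under this substitution.

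Then the computation is essentially one line. For any $g$ as above,
\begin{align*}
  \E\left[\tau_{aX}(aX)\,\mathcal{D}^{\star}_ag(aX-l_a)\right]
   &=\E\left[a^{2}\tau_X(X)\cdot\frac{1}{a}\,\mathcal{D}^{\star}\tilde{g}(X-l)\right]
    =a\,\E\left[\tau_X(X)\,\mathcal{D}^{\star}\tilde{g}(X-l)\right]\\
   &=a\,\E\left[(X-\nu)\,\tilde{g}(X)\right]
    =\E\left[(aX-a\nu)\,g(aX)\right],
\end{align*}
where the penultimate equality is \eqref{eq:51} for $X$ applied to the admissible function $\tilde{g}$. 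Since $a\nu$ is the mean of $aX$, this is exactly \eqref{eq:51} for $aX$, proving that $a^{2}\tau_X(X)$ is a $\mathcal{D}_a$-Stein kernel for $aX$.

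The only genuinely delicate point is the bookkeeping around the shift: one must make sure that the operator $\mathcal{D}_a$ on $a\mathcal{X}$ that inherits ``the same assumptions'' also has shift $l_a=al$, so that $aX-l_a=a(X-l)$ and the chain rule can be applied at the shifted argument; and one must check that the domain class transports correctly under $g\mapsto g(a\,\cdot)$. In the two settings relevant to the applications — the continuous case ($l=l_a=0$, Example~\ref{ex:cpont}) and the scaled lattice case (Example~\ref{ex:discmgrid}) — both facts are immediate, and in general they are precisely what the compatibility hypothesis on $\mathcal{D}^{\star}_a$ is meant to encode.
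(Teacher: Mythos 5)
Your proof is correct and supplies precisely the details that the paper's one-line proof (``The claim follows immediately from the definition.'') omits. The core computation — substituting $\tilde{g}(x)=g(ax)$, applying the chain rule at the shifted point via $aX-l_a=a(X-l)$, and then invoking \eqref{eq:51} for $X$ — is the intended argument, and your reading of the (somewhat ambiguously typeset) chain-rule hypothesis as $\mathcal{D}^{\star}\bigl(g(a\,\cdot)\bigr)(x)=a\,(\mathcal{D}^{\star}_ag)(ax)$ is the only interpretation that makes the proposition true in the baseline continuous case. You are also right to flag the shift-compatibility $l_a=al$ and the transport of the domain class as the only points that require checking; these are exactly the implicit assumptions packed into ``$\mathcal{D}^{\star}_a$ satisfying the same assumptions as $\mathcal{D}$ on $a\mathcal{X}$.''
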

\begin{proof}
  The claim follows immediately from the definition. 
\end{proof}
Let $X_i, i=1, \ldots, n$, be independent random variables with
respective means $\nu_i$, and put $W= \sum_{i=1}^n X_i$.  Following
\cite[Lecture VI]{Stein1986} and
\cite{nourdin2013integration,nourdin2013entropy} we obtain an almost
sure  representation formula for the Stein kernel of sums of
independent random variables.

\begin{lemma} \label{couplinglemma} Suppose that (i)  $Id - \nu_i \in Im(\mathcal{T}_i)$ for
  $i=1, \ldots, n$ and (ii) $Id - \sum_{i=1}^n\nu_i \in Im(\mathcal{T}_W)$
and  (iii) the collection of functions of the form  $\mathcal{D}^{\star}g$
with   $g\in dom(\mathcal{D}, W, \tau_W) \cap \left(
    \bigcap_{i=1}^ndom(\mathcal{D}, X_i, \tau_{X_i}) \right)$ is dense
  in $L^1(\mu)$. 
Then 
  \begin{equation*}
    \tau_W(W) =  \E \left[ \sum_{i=1}^n \tau_{X_i} (X_i) \, | \, W
    \right] \quad a.s.
  \end{equation*}
\end{lemma}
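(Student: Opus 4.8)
The plan is to check that the random variable $\tilde\tau(W):=\EE\big[\sum_{i=1}^n\tau_{X_i}(X_i)\mid W\big]$ satisfies the defining identity \eqref{eq:51} of a $\mathcal{D}$-Stein kernel for $W$, and then to invoke the uniqueness clause supplied by hypothesis (iii). Note first that $\tilde\tau(W)$ is by construction $\sigma(W)$-measurable, hence ``measurable in $W$'' as the definition requires, and that it is integrable since (by Proposition~\ref{prop:stek} and Remark~\ref{rem:positstek}) each $\tau_{X_i}(X_i)\ge 0$ with $\EE[\tau_{X_i}(X_i)]=\delta^{-1}\mathrm{Var}(X_i)<\infty$.

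Next, fix $g\in dom(\mathcal{D},W,\tau_W)\cap\big(\bigcap_{i=1}^n dom(\mathcal{D},X_i,\tau_{X_i})\big)$ and write $\nu:=\sum_{i=1}^n\nu_i$. By linearity, $\EE[(W-\nu)g(W)]=\sum_{i=1}^n\EE[(X_i-\nu_i)g(W)]$. For each $i$ set $W_i:=W-X_i=\sum_{j\ne i}X_j$, which is independent of $X_i$. Conditioning on $W_i$ and using that, in the relevant settings (where $\mathcal{D}$ is the strong derivative or a forward/backward difference, the cases in which sums of independent variables are natural), $\mathcal{D}^\star$ commutes with translations, i.e.\ $\mathcal{D}^\star\big(g(\cdot+w)\big)(x)=(\mathcal{D}^\star g)(x+w)$, one applies the Stein-kernel identity \eqref{eq:51} for $X_i$ to the shifted test function $x\mapsto g(x+W_i)$. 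On the event $\{W_i=w_i\}$ this gives
$$\EE\big[(X_i-\nu_i)\,g(X_i+w_i)\big]=\EE\big[\tau_{X_i}(X_i)\,(\mathcal{D}^\star g)(X_i-l+w_i)\big],$$
and since $X_i-l+w_i=W-l$ there, integrating out $W_i$ yields $\EE[(X_i-\nu_i)g(W)]=\EE[\tau_{X_i}(X_i)\,\mathcal{D}^\star g(W-l)]$.

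Summing over $i$ and using that $\mathcal{D}^\star g(W-l)$ is $\sigma(W)$-measurable, the tower property gives
$$\EE[(W-\nu)g(W)]=\EE\Big[\Big(\sum_{i=1}^n\tau_{X_i}(X_i)\Big)\mathcal{D}^\star g(W-l)\Big]=\EE\big[\tilde\tau(W)\,\mathcal{D}^\star g(W-l)\big],$$
so $\tilde\tau$ satisfies \eqref{eq:51} for $W$ for every admissible $g$. By hypothesis (iii) the family $\{\mathcal{D}^\star g\}$ over such $g$ is dense in $L^1(\mu)$, so the $\mathcal{D}$-Stein kernel of $W$ is unique; therefore $\tau_W(W)=\tilde\tau(W)=\EE\big[\sum_{i=1}^n\tau_{X_i}(X_i)\mid W\big]$ $\mu$-a.s.

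The main obstacle is the domain bookkeeping around $\mathcal{D}^\star$: one must verify that the translated test function $g(\cdot+W_i)$ actually lies in $dom(\mathcal{D},X_i,\tau_{X_i})$ for $W_i$-a.e.\ value (so that \eqref{eq:51} for $X_i$ may be applied), that $\mathcal{D}^\star$ genuinely commutes with the shift for the operator $\mathcal{D}$ in play, and that hypotheses (i)--(iii) together with independence justify the repeated use of Fubini's theorem and of conditioning. These verifications are routine in each concrete instance but are precisely where the content of the hypotheses is spent; the algebraic identity itself is immediate once they are in place.
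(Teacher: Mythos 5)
Your proof is correct and follows essentially the same route as the paper's: decompose $\EE[(W-\nu)g(W)]=\sum_i\EE[(X_i-\nu_i)g(W)]$, condition on $W_i$, apply the Stein-kernel identity for $X_i$, re-condition on $W$, and conclude by density of $\{\mathcal{D}^\star g\}$. The only real difference is presentational: you are more explicit about the translation-invariance of $\mathcal{D}^\star$ and the domain bookkeeping for $g(\cdot+W_i)$, points the paper compresses into the phrase ``an appropriate version of \eqref{eq:8}''.
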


\begin{proof}
For every $g \in dom(\cD,W,\tau_W)$ we have with \eqref{eq:8} that 
\begin{align*}
  - \E [ \tau_W (W)\cD^{\star} g(W) ] &= \E \left[ \left(W -\sum_{i=1}^n\nu_i\right) g(W) \right] \\
  &= \sum_{i=1}^n \E \left\{ \E [(X_i - \nu_i) g(W) |W_i ] \right\}
\end{align*}
where $W_i=W-X_i=\sum_{j\neq i}X_j$ is independent of
$X_i$. Therefore, conditionally on $W_i$ we can use (an appropriate
version of) \eqref{eq:8} for each $X_i$, turning the previous expression into
\begin{align*}
  - \sum_{i=1}^n \E \left\{ \E \left[\tau_{X_i}(X_i) \cD^{\star} g(W)
      |W_i \right] \right\}
  &=  -  \sum_{i=1}^n \E  \left\{ \E \left[\tau_{X_i}(X_i)  \cD^{\star} g(W) |W \right] \right\} \\
  &= - \E \left\{ \E \left[ \sum_{i=1}^n \tau_{X_i} (X_i) | W
    \right]\cD^{\star} g(W)\right\}
\end{align*}
where the first equality follows de-conditioning \mbox{w.r.t.} $W_i$
and then conditioning \mbox{w.r.t.} $W$.  The assertion follows by
denseness.
\end{proof} 

Combining this representation lemma with Corollary~\ref{cor:compstfac}
leads to the following general result, which in particular implies
inequality \eqref{eq:CLTSTk} from Section~\ref{sec:application-2-}.  

\begin{proposition} \label{genprop2}   Suppose that the assumptions in Lemma
  \ref{couplinglemma} are satisfied.  Let $X$ be a random variable
  with finite mean $\nu=\sum_{i=1}^n\nu_i$. If
  $g_h=\mathcal{T}_X^{-1}(h - \E [h(X)])/\tau_X \in dom(\cD,W,\tau_W) \cap
  dom(\cD,X,\tau_X)$ then
 \begin{align*}
   | \E h(X) - \E h(W) | &\le  || \cD g_h ||_\infty   \E
   \left|   \tau_X(W)  -    \sum_{i=1}^n \tau_{X_i} (X_i)   \right|  
 \end{align*}
 for all $h \in
\mathcal{H}$ a class of functions as in Corollary~\ref{cor:compstfac}.
\end{proposition}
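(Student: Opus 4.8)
The plan is to combine Corollary~\ref{cor:compstfac} with the representation of the Stein kernel of $W$ given in Lemma~\ref{couplinglemma}. The starting point is identity \eqref{eq:6}, which is the specialisation of Corollary~\ref{cor:compstfac} to the choice $\omega(x)=\nu-x$ when both variables share the common mean $\nu$. Here $X_1 = X$ and $X_2 = W$; note that $\E W = \sum_{i=1}^n \nu_i = \nu$ by hypothesis, so the two distributions do indeed have the same mean and \eqref{eq:6} applies. Thus I would first invoke Corollary~\ref{cor:compstfac} (in the form \eqref{eq:stefact}--\eqref{eq:6}) with $\cT_1^{-1}(\omega)=\tau_X$ and $\cT_2^{-1}(\omega)=\tau_W$ to obtain
\begin{equation*}
  |\E h(X) - \E h(W)| \le \|\cD g_h\|_\infty \, \E\left| \tau_X(W) - \tau_W(W) \right|,
\end{equation*}
where $g_h = \mathcal{T}_X^{-1}(h-\E[h(X)])/\tau_X$ is exactly the function appearing in the statement, and $\kappa_{\mathcal{H},2} = \sup_{h\in\mathcal{H}}\|\cD g_h\|_\infty$ is the supremum of these quantities over $\mathcal{H}$. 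The hypotheses that $g_h \in dom(\cD,W,\tau_W)\cap dom(\cD,X,\tau_X)$ are precisely what is required for Corollary~\ref{cor:compstfac} to be applicable: they guarantee that $\tau_W$ is the Stein kernel of $W$ in the sense of \eqref{eq:51} and that the integration-by-parts manipulations used to cancel the second summand of \eqref{eq:start} are legitimate.

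Next I would replace $\tau_W(W)$ by its conditional-expectation representation. The assumptions in Lemma~\ref{couplinglemma} are in force by hypothesis, so
\begin{equation*}
  \tau_W(W) = \E\left[ \sum_{i=1}^n \tau_{X_i}(X_i) \,\Big|\, W \right] \quad \text{a.s.}
\end{equation*}
Substituting this into the bound above gives
\begin{equation*}
  |\E h(X) - \E h(W)| \le \|\cD g_h\|_\infty \, \E\left| \tau_X(W) - \E\left[ \textstyle\sum_{i=1}^n \tau_{X_i}(X_i) \,\big|\, W \right] \right|.
\end{equation*}
Since $\tau_X(W)$ is $\sigma(W)$-measurable, it passes inside the conditional expectation, so the argument of the absolute value equals $\bigl|\E[\,\tau_X(W) - \sum_{i=1}^n \tau_{X_i}(X_i)\mid W\,]\bigr|$. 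Applying the conditional Jensen inequality $|\E[\cdot\mid W]| \le \E[|\cdot|\mid W]$ followed by the tower property yields
\begin{equation*}
  \E\left| \tau_X(W) - \E\left[ \textstyle\sum_{i=1}^n \tau_{X_i}(X_i) \mid W \right] \right| \le \E\left| \tau_X(W) - \sum_{i=1}^n \tau_{X_i}(X_i) \right|,
\end{equation*}
which is exactly the bound claimed in the Proposition.

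The only genuinely delicate point is bookkeeping of the function classes: one must check that the hypotheses imposed on $g_h$ in the statement do indeed match the hypotheses of Corollary~\ref{cor:compstfac} (taking $\omega = \nu - Id$ there, so that $\cT_1^{-1}(\omega)/\cT_1^{-1}(\omega)$-type denseness conditions reduce to the stated membership $g_h \in dom(\cD,W,\tau_W)\cap dom(\cD,X,\tau_X)$) and of the identity \eqref{eq:51} used within Lemma~\ref{couplinglemma}. Everything else is a one-line application of conditional Jensen and the tower property. I do not anticipate any analytic obstacle beyond this matching of domains; the substantive content has already been isolated in Corollary~\ref{cor:compstfac} and Lemma~\ref{couplinglemma}.
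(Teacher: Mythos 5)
Your proposal is correct and follows essentially the same route as the paper: apply Corollary~\ref{cor:compstfac} in the form \eqref{eq:6} with $X_1=X$, $X_2=W$, substitute the representation $\tau_W(W)=\E[\sum_i\tau_{X_i}(X_i)\mid W]$ from Lemma~\ref{couplinglemma}, and finish by conditional Jensen and the tower property. The only difference is cosmetic: you keep the absolute value inside the $\E_2$-expectation from the start (matching the statement of the corollary), whereas the paper's displayed first line has it outside, which is a minor typo that the subsequent line silently corrects.
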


\begin{proof}
Lemma \ref{couplinglemma} with Corollary~\ref{cor:compstfac} (whose conditions are satisfied) gives that 
\begin{align*}
&| \E h(X) - \E h(W) | \\
&\hspace{1cm}\leq || \cD g_h||_\infty \left|  \E [\tau_X(W) - \tau_W (W)] \right|\\
&\hspace{1cm}\le || \cD g_h||_\infty \E \left| \tau_X(W)- \E \left[ \sum_{i=1}^n  \tau_{X_i} (X_i) | W \right] \right|.
\end{align*} 
The assertion now follows by Jensen's inequality for conditional expectations. 
\end{proof}
\begin{proposition}\label{prop:sumwithscale}
  Let $W = \frac{1}{\sqrt n}\sum_{i=1}^n \xi_i$ with $\xi_i, i=1,
  \ldots, n$ centered  independent random variables 
  with $\mathcal{D}$-Stein kernels $\tau_i, i=1, \ldots, n$. Then   
\begin{equation}
  \label{eq:48}
  \tau_W(W) = \frac{1}{n} \sum_{i=1}^n \E \left[ \tau_i(\xi_i) \, | \,
  W\right]
\end{equation}
is a Stein kernel for $W$.  Furthermore the Stein discrepancy of $W$
satisfies
  \begin{equation}
    \label{eq:44}
    S(W) := \sqrt{\E \left[ \left( 1 - \tau_W(W) \right)^2 \right]}\le
    \frac{1}{n} \sqrt{ \sum_{i=1}^n\mbox{Var}(\tau_i(\xi_i))}. 
  \end{equation}

\end{proposition}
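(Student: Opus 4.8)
The plan is to reduce both assertions to results already established. For the representation formula \eqref{eq:48}, I would first apply the scaling relation \eqref{eq:52}: writing $W = \sum_{i=1}^n X_i$ with $X_i = \xi_i/\sqrt n$, the chain rule for $\mathcal{D}^{\star}$ in the continuous setting (strong derivative, where $\mathcal{D}^{\star} = (\cdot)'$ and $l=0$) gives $\tau_{X_i}(X_i) = n^{-1}\tau_i(\xi_i)$. Then I would invoke Lemma \ref{couplinglemma} applied to the sum $W = \sum_{i=1}^n X_i$, whose hypotheses (i)--(iii) I would need to verify hold under the stated assumption that each $\xi_i$ has a $\mathcal{D}$-Stein kernel; this verification is essentially bookkeeping about membership in images of Stein operators and a denseness condition, inherited from the hypothesis. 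The lemma then yields $\tau_W(W) = \E[\sum_{i=1}^n \tau_{X_i}(X_i)\mid W] = n^{-1}\sum_{i=1}^n \E[\tau_i(\xi_i)\mid W]$ almost surely, which is \eqref{eq:48}.

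For the Stein discrepancy bound \eqref{eq:44}, I would start from $1 - \tau_W(W) = \frac1n\sum_{i=1}^n\bigl(1 - \E[\tau_i(\xi_i)\mid W]\bigr)$, using that each $\xi_i$ is centered with unit variance so that $\E[\tau_i(\xi_i)] = 1$ by \eqref{eq:46} (with $\delta=1$ in the strong-derivative case). Taking the $L^2(\PP)$ norm and applying the triangle inequality gives
\begin{equation*}
  S(W) \le \frac1n \sum_{i=1}^n \bigl\| 1 - \E[\tau_i(\xi_i)\mid W]\bigr\|_2.
\end{equation*}
The key point is then that conditional expectation is an $L^2$-contraction: $\|1 - \E[\tau_i(\xi_i)\mid W]\|_2 = \|\E[1-\tau_i(\xi_i)\mid W]\|_2 \le \|1-\tau_i(\xi_i)\|_2 = \sqrt{\Var(\tau_i(\xi_i))}$, the last equality because $\E[1-\tau_i(\xi_i)]=0$. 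Substituting gives $S(W) \le \frac1n\sum_{i=1}^n\sqrt{\Var(\tau_i(\xi_i))}$. To obtain the stated form with a single square root, I would apply the Cauchy--Schwarz inequality $\sum_{i=1}^n a_i \le \sqrt n\,\sqrt{\sum_{i=1}^n a_i^2}$ with $a_i = \sqrt{\Var(\tau_i(\xi_i))}$, which turns $\frac1n\sum_i\sqrt{\Var(\tau_i(\xi_i))}$ into $\frac1n\sqrt{n}\sqrt{\sum_i\Var(\tau_i(\xi_i))} = \frac{1}{\sqrt n}\sqrt{\frac1n\sum_i\Var(\tau_i(\xi_i))}$; note this differs slightly from the bound as typeset, and in the i.i.d.\ case both reduce to $\frac{1}{\sqrt n}\sqrt{\Var(\tau_1(\xi_1))}$, matching \eqref{eq:CLTSTk}.

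The main obstacle I anticipate is not any single inequality — those are all standard — but rather the careful checking that the scaling lemma \eqref{eq:52} applies in this setting (i.e.\ that $\mathcal{D}^{\star}$ genuinely satisfies the required chain rule $\mathcal{D}^{\star}f(ax) = a\mathcal{D}^{\star}_a f(x)$, which is immediate for the derivative but must be stated), and that the technical hypotheses (i)--(iii) of Lemma \ref{couplinglemma} are inherited from the assumption that each $\xi_i$ possesses a Stein kernel together with the existence of $\tau_W$. In a fully rigorous write-up one would want to either assume directly that these regularity conditions hold or restrict to the strong-derivative continuous case where they can be checked explicitly; I would flag this rather than grind through it.
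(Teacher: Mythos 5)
Your approach to the representation formula \eqref{eq:48} (scale to $X_i = \xi_i/\sqrt n$, invoke the chain-rule scaling and Lemma \ref{couplinglemma}) is essentially the intended ``straightforward conditioning argument,'' so that part is fine.

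The proof of \eqref{eq:44}, however, has a genuine gap, and it is masked by an algebra slip. After the Cauchy--Schwarz step you write $\frac1n\sqrt n\,\sqrt{\sum_i\Var(\tau_i(\xi_i))} = \frac{1}{\sqrt n}\sqrt{\frac1n\sum_i\Var(\tau_i(\xi_i))}$, but the left-hand side is in fact $\frac{1}{\sqrt n}\sqrt{\sum_i\Var(\tau_i(\xi_i))}$, which is larger than the right-hand side (and than the stated bound $\frac1n\sqrt{\sum_i\Var(\tau_i(\xi_i))}$) by a factor of $\sqrt n$. So your chain of inequalities actually proves only $S(W)\le \frac{1}{\sqrt n}\sqrt{\sum_i\Var(\tau_i(\xi_i))}$, which in the i.i.d.\ case gives $S(W)\le\sqrt{\Var(\tau_1(\xi_1))}$ --- a constant, not $O(1/\sqrt n)$ --- and is therefore useless for the CLT application the proposition is meant to serve.

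The source of the loss is structural, not cosmetic: the $L^2$ triangle inequality $\|{\textstyle\sum_i}\,Y_i\|_2\le\sum_i\|Y_i\|_2$ throws away the fact that the $\xi_i$ (hence the $\tau_i(\xi_i)$) are \emph{independent}. Independence is exactly what makes the cross-terms in $\E\bigl[(\sum_i(1-\tau_i(\xi_i)))^2\bigr]$ vanish, so that this expectation equals $\sum_i\Var(\tau_i(\xi_i))$ rather than being bounded by $(\sum_i\|1-\tau_i(\xi_i)\|_2)^2$. The correct route is therefore: write $1-\tau_W(W)=\E\bigl[\frac1n\sum_i(1-\tau_i(\xi_i))\mid W\bigr]$, apply Jensen/conditional $L^2$-contraction to the \emph{whole sum at once} (not term by term) to get $\E[(1-\tau_W(W))^2]\le\frac{1}{n^2}\E\bigl[(\sum_i(1-\tau_i(\xi_i)))^2\bigr]$, and then compute the right-hand side exactly via independence, yielding $\frac{1}{n^2}\sum_i\Var(\tau_i(\xi_i))$. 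Taking square roots gives the stated $\frac1n\sqrt{\sum_i\Var(\tau_i(\xi_i))}$. Replace the triangle-inequality-plus-Cauchy--Schwarz step with this direct computation and the proof closes.
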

\begin{proof}
  Identity \eqref{eq:48} follows from a straightforward conditioning
  argument. To see \eqref{eq:44} note how under the assumptions of the
  proposition we have 
  \begin{align*}
    \E \left[ \left( 1 - \tau_W(W) \right)^2 \right] & = \E \left[
                                                       \left( \E \left[ 
                                                       \frac{1}{n}
                                                       \sum_{i=1}^n
                                                       (1-
                                                       \tau_i(\xi_i))
                                                       \, | \, W
                                                       \right]\right)^2
                                                       \right] \\
    & \le  \E \left[
                                                       \left(  
                                                       \frac{1}{n}
                                                       \sum_{i=1}^n
                                                       (1-
                                                       \tau_i(\xi_i))
                                                        \right)^2
                                                       \right] \\
&  \le \frac{1}{n^2} \sum_{i=1}^n\mbox{Var}(\tau_i(\xi_i)).
  \end{align*}
\end{proof}

Our general setup also caters for comparison of distributions with
Stein pair based on different linear operators $\mathcal{D}$; this has
already been explored in \cite{goldstein2013stein} for Beta
approximation of the Polya-Eggenberger distribution. Here we illustrate the
technique for Gaussian comparison in terms of Stein discrepancies.
\begin{proposition}\label{sec:sums-indep-rand}
  Let $\mathcal{D}$ be a linear operator satisfying the Assumptions
  from Section \ref{sec:setup}; let $l$ be as in Assumption
  \ref{ass:prodrule}.  Let $W$ be centered with variance $\sigma^2$, and
  $\mathcal{D}$-Stein pair $(\mathcal{T}_W, \mathcal{F}(W))$; let
  $\tau_W$ be the corresponding Stein kernel. Let $Z \sim
  \mathcal{N}(0, 1)$ and $ S(W)$ as above
 the Stein discrepancy between $W$ and $Z$. Then for all
  $g \in dom((\cdot)', Z) \cap dom(\mathcal{D}, W, \tau_W)$ we have
  \begin{equation}
    \label{eq:30}
        \left| \E \left[ g'(W) - W g(W) \right]  \right| \le  S(W) \|
        g'\| + \sigma^2\| g'(\cdot) - \mathcal{D}^{\star}g( \cdot - l)
        \|_{\infty} + \| g(\cdot-l) - g( \cdot)\|_{\infty}.
  \end{equation}
\end{proposition}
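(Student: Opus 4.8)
The left-hand side of \eqref{eq:30} is the classical Gaussian Stein expression $g'-Id\cdot g$ evaluated at $W$; since $\E[g'(Z)-Zg(Z)]=0$ it measures how far the law of $W$ is from $\mathcal{N}(0,1)$, and the plan is to rewrite $\E[Wg(W)]$ via $W$'s own Stein kernel and then compare with $\E[g'(W)]$ term by term. The starting point is the Stein kernel identity \eqref{eq:51} (equivalently \eqref{eq:8} with $h=Id$, or the operator \eqref{eq:tau}): since $W$ is centered and $g\in dom(\mathcal{D},W,\tau_W)$,
\begin{equation*}
  \E\bigl[\tau_W(W)\,\mathcal{D}^{\star}g(W-l)\bigr] = \E\bigl[W\,g(W-l)\bigr],
\end{equation*}
and, specializing $g=Id$, $\E[\tau_W(W)]=\mathrm{Var}(W)=\sigma^{2}$ (Proposition \ref{prop:stek}). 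The hypothesis $g\in dom((\cdot)',Z)\cap dom(\mathcal{D},W,\tau_W)$ is exactly what makes both $g'$ and this identity legitimate.

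Next I would telescope. Writing $\E[Wg(W)]=\E[Wg(W-l)]+\E[W(g(W)-g(W-l))]$, replacing $\E[Wg(W-l)]$ by $\E[\tau_W(W)\mathcal{D}^{\star}g(W-l)]$ via the identity above, and splitting $\E[g'(W)]=\E[\tau_W(W)g'(W)]+\E[(1-\tau_W(W))g'(W)]$, one obtains
\begin{align*}
  \E\bigl[g'(W)-Wg(W)\bigr]
  &= \E\bigl[(1-\tau_W(W))\,g'(W)\bigr]
   + \E\bigl[\tau_W(W)\bigl(g'(W)-\mathcal{D}^{\star}g(W-l)\bigr)\bigr] \\
  &\quad + \E\bigl[W\bigl(g(W-l)-g(W)\bigr)\bigr].
\end{align*}
Each summand is then controlled by a one-line estimate. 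By Cauchy--Schwarz and the definition \eqref{eq:38} of the Stein discrepancy, the first is at most $\|g'\|_{\infty}\bigl(\E[(1-\tau_W(W))^{2}]\bigr)^{1/2}=S(W)\|g'\|_{\infty}$. Since $\tau_W\ge0$ in the settings of Remark \ref{rem:positstek}, $\E|\tau_W(W)|=\E\tau_W(W)=\sigma^{2}$, so the second is at most $\sigma^{2}\,\|g'(\cdot)-\mathcal{D}^{\star}g(\cdot-l)\|_{\infty}$. Finally the third is at most $\E|W|\,\|g(\cdot-l)-g(\cdot)\|_{\infty}\le\sigma\,\|g(\cdot-l)-g(\cdot)\|_{\infty}$, a contribution which is present only when the lattice spacing $l$ of $W$ is nonzero and which vanishes in the continuous-to-continuous comparison ($l=0$). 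Adding the three bounds gives \eqref{eq:30}.

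The argument is thus almost entirely formal, and the one place needing care is the bookkeeping of the shift $l$: the Stein kernel identity for $W$ lives in the ``$\mathcal{D}$-calculus'' and produces $\mathcal{D}^{\star}g$ and $g$ evaluated at $W-l$, whereas the target operator $g\mapsto g'-Id\cdot g$ is the ordinary-derivative Gaussian operator, so the two ``discretization errors'' $g'(\cdot)-\mathcal{D}^{\star}g(\cdot-l)$ and $g(\cdot-l)-g(\cdot)$ are exactly what survives the comparison; one must also check that $g$ remains in $dom(\mathcal{D},W,\tau_W)$ at each use of the covariance identity. The only non-routine input is the non-negativity of the Stein kernel (Remark \ref{rem:positstek}), used to pass from $\E|\tau_W(W)|$ to $\sigma^{2}=\E\tau_W(W)$; without it the middle term could only be bounded by $\E|\tau_W(W)|\,\|g'(\cdot)-\mathcal{D}^{\star}g(\cdot-l)\|_{\infty}$.
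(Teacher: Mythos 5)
Your proof is correct and follows essentially the same route as the paper's: the same three-term decomposition
\begin{equation*}
  \E\bigl[g'(W)-Wg(W)\bigr]
  = \E\bigl[(1-\tau_W(W))g'(W)\bigr]
  + \E\bigl[\tau_W(W)\bigl(g'(W)-\mathcal{D}^{\star}g(W-l)\bigr)\bigr]
  + \E\bigl[W\bigl(g(W-l)-g(W)\bigr)\bigr],
\end{equation*}
with Cauchy--Schwarz for the first term, nonnegativity of $\tau_W$ and $\E[\tau_W(W)]=\sigma^2$ for the second, and $\E|W|\le\sigma$ for the third. Note that your bound for the third summand is $\sigma\,\|g(\cdot-l)-g(\cdot)\|_{\infty}$, whereas the displayed inequality \eqref{eq:30} has no factor $\sigma$; the natural estimate via $\E|W|\le\sigma$ is exactly yours, so your version is the correct one (the two agree only when $\sigma=1$, which is the case used downstream in Proposition \ref{sec:sums-indep-rand-1} where the distinction is immaterial).
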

\begin{proof}
Applying Proposition \ref{prop:stek} to $\nu = 0$ we get 
\begin{equation}
  \label{eq:43}
   \E \left[ W g(W - l) \right] = \E \left[ \tau_W(W) \mathcal{D}^{\star}g(W-l) \right]
\end{equation}
for all $g \in dom(\mathcal{D}, W, \tau_W)$. If  furthermore
 ${g \in }dom((\cdot)', Z)$ then 
\begin{align*}
  \E \left[ g'(W) - W g(W)  \right] & = \E \left[ g'(W) - W g(W-l)
                                     \right] + \E \left[ W \left(
                                     g(W-l) - g(W) \right) \right] \\
  & = \E \left[ g'(W) - \tau_W(W) \mathcal{D}^{\star}g(W-l) \right] + \E \left[ W \left(
                                     g(W-l) - g(W) \right) \right] \\
& =  \E \left[ g'(W)(1 - \tau_W(W)) \right]  + \E \left[ \tau_W(W) \left( g'(W) -
  \mathcal{D}^{\star}g(W-l) \right) \right]\\
& \quad     + \E \left[ W \left(   g(W-l) - g(W) \right) \right].
\end{align*}
Applying Cauchy-Schwarz to the first summand in the last equality
yields the first summand of \eqref{eq:30}. To get the second summand
of \eqref{eq:30} note that $\tau_W(W) \ge 0$ almost surely (recall
Remark \ref{rem:positstek} so that
\begin{align*}
  \left|  \E \left[ \tau_W(W) \left( g'(W) -
  \mathcal{D}^{\star}g(W-l) \right) \right] \right| \le   
  \E \left[ \tau_W(W) \right]  \| \left( g'(\cdot) -
  \mathcal{D}^{\star}g(\cdot-l) \right) \|_{\infty}
\end{align*}
and now we use $\E \left[ \tau_W(W) \right]   = \mbox{Var}(W) = \sigma^2$.
The last term in \eqref{eq:30} follows by a similar reasoning. 
\end{proof}
As an illustration we now provide a Gaussian approximation bound in
Wasserstein distance under a Stein kernel assumption.
\begin{proposition}\label{sec:sums-indep-rand-1} Let $W$  be centered
  with variance $\sigma^2$ and support 
  in $\delta \Z$ for some $\delta>0$. Consider
  $\mathcal{D} = \delta^{-1}\Delta^+_{\delta}$ as in Example
  \ref{ex:discmgrid}.  Suppose that the assumptions in Lemma
  \ref{couplinglemma} are satisfied. Then
  \begin{equation}
    \label{eq:34}
d_{\mathrm{Wass}} (W, Z) \le  S(W)
    +(1+\sigma^2)\delta
  \end{equation}
with $d_{\mathrm{Wass}}(W, Z)$ the Wasserstein distance between the
laws of $W$ and $Z$. 
\end{proposition}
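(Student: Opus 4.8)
The strategy is to feed the classical one‑dimensional Gaussian Stein equation into inequality \eqref{eq:30} of Proposition \ref{sec:sums-indep-rand} and then to control the two ``discretization'' terms on its right‑hand side by Taylor expansion on the lattice $\delta\Z$. Recall that $d_{\mathrm{Wass}}(W,Z) = \sup\{|\E h(W) - \E h(Z)| : h \text{ Lipschitz},\ \|h'\|_\infty \le 1\}$. Fix such an $h$, set $\bar h = h - \E[h(Z)]$, and let $g_h$ be the bounded solution of $g_h'(x) - x g_h(x) = \bar h(x)$, i.e.\ $g_h(x) = \varphi(x)^{-1}\int_{-\infty}^x \bar h(t)\varphi(t)\,dt$. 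Since $g_h$ solves this equation, $\E[g_h'(W) - W g_h(W)] = \E h(W) - \E[h(Z)]$, so it suffices to bound the right‑hand side of \eqref{eq:30} applied with $g = g_h$ and with $l = -\delta$ as in Example \ref{ex:discmgrid}.

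First I would record the membership $g_h \in dom((\cdot)',Z) \cap dom(\mathcal{D}, W, \tau_W)$ needed for \eqref{eq:30}. The standard Stein‑factor estimates for Lipschitz test functions (see \cite{ChGoSh11,NP11}) give $g_h \in C^1$ with $\|g_h'\|_\infty \le 1$ and $\|g_h''\|_\infty \le 2$; in particular $g_h$ and $g_h'$ are bounded. Combined with $\E|\tau_W(W)| = \mbox{Var}(W) = \sigma^2 < \infty$ (from Proposition \ref{prop:stek}) and with the hypotheses carried over from Lemma \ref{couplinglemma} (which guarantee that $\tau_W$ is a genuine Stein kernel and that the relevant classes are dense), the integrability and vanishing‑boundary conditions defining $dom(\mathcal{D}, W, \tau_W)$ and $dom((\cdot)',Z)$ hold for $g_h$. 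Hence \eqref{eq:30} yields, for this $h$,
\[
|\E h(W) - \E h(Z)| \le S(W)\|g_h'\|_\infty + \sigma^2 \big\|g_h'(\cdot) - \mathcal{D}^{\star}g_h(\cdot+\delta)\big\|_\infty + \|g_h(\cdot+\delta) - g_h(\cdot)\|_\infty ,
\]
where I used $-l = \delta$ and $\mathcal{D}^{\star}g_h(x+\delta) = \delta^{-1}(g_h(x+\delta) - g_h(x))$ from Example \ref{ex:discmgrid}.

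It remains to estimate the three terms. The first is at most $S(W)$ since $\|g_h'\|_\infty \le 1$. For the third, $\|g_h(\cdot+\delta) - g_h(\cdot)\|_\infty = \sup_x \big|\int_x^{x+\delta} g_h'(t)\,dt\big| \le \delta\|g_h'\|_\infty \le \delta$. For the middle term, Taylor's formula with integral remainder gives $g_h(x+\delta) - g_h(x) = \delta g_h'(x) + \int_x^{x+\delta}(x+\delta - t)g_h''(t)\,dt$, so $|g_h'(x) - \mathcal{D}^{\star}g_h(x+\delta)| = \delta^{-1}\big|\int_x^{x+\delta}(x+\delta - t)g_h''(t)\,dt\big| \le \tfrac{\delta}{2}\|g_h''\|_\infty \le \delta$. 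Summing the three bounds gives $|\E h(W) - \E h(Z)| \le S(W) + (1+\sigma^2)\delta$, and taking the supremum over all $1$‑Lipschitz $h$ yields \eqref{eq:34}. (Using the sharper $\|g_h'\|_\infty \le \sqrt{2/\pi}$ would in fact improve the coefficients, but the stated clean form suffices.)

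The Taylor estimates and the quotation of the Gaussian Stein factors are routine; the one point that needs genuine care is the verification in the second step that $g_h$ lies in $dom(\mathcal{D}, W, \tau_W) \cap dom((\cdot)',Z)$ — that is, that no boundary or tail contributions are silently discarded when \eqref{eq:30} is invoked. This is precisely where the standing assumptions of Lemma \ref{couplinglemma} (existence of the Stein kernel, $\mathrm{Id}-\nu \in \mathrm{Im}(\mathcal{T}_W)$, and the denseness condition) enter, and it reduces to the boundedness of $g_h$ and $g_h'$ together with $\tau_W \in L^1(\mu)$.
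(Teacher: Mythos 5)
Your proof is correct and takes essentially the same route as the paper: apply \eqref{eq:30} with $g=g_h$ the classical bounded solution of the Gaussian Stein equation for a $1$‑Lipschitz test function, invoke the Stein‑factor bounds $\|g_h'\|_\infty\le1$ and $\|g_h''\|_\infty\le2$, and control the two lattice terms by Taylor expansion to get $\le\delta$ each, yielding $S(W)+(1+\sigma^2)\delta$. The only cosmetic difference is that you take the $+\delta$ shift (consistent with $l=-\delta$ from Example~\ref{ex:discmgrid}) where the paper writes its increments with $-\delta$, but this does not affect any of the bounds; your added remarks on the Wasserstein‑distance definition and the domain membership of $g_h$ simply make explicit what the paper leaves implicit.
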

\begin{proof}
We aim to apply \eqref{eq:30}, with $g= g_h$ the classical solution to
the Gaussian Stein equation 
\begin{align*}
  g'(x) - xg(x) =  h(x) - \E h(Z)
\end{align*}
where $h$ is a Lipschitz function with constant 1. 
The properties of such $g$ are well understood,
see e.g.\ \cite[Lemma 2.3]{BC05}. In particular these functions are
differentiable and bounded   with  $\|g'\|_{\infty}\le 1$ so that 
\begin{equation*}
  |g(x-\delta) - g(x)| = \int_{-\delta}^0 g'(x+u) du  \le \delta
\end{equation*}
for all $x \in \R$. Also, $\|g''\|_{\infty}\le 2$  and  hence
\begin{align*}
   |g'(x) - \mathcal{D}^{\star}g( x - l)| & =  | g'(x) - \delta^{-1} (g(x)
                                            - g(x-\delta)) | \\
  & = \left| \frac{1}{\delta} \int_{-\delta}^0 \int_0^ug''(x+v)dv du
    \right| \\
& \le  \delta,
\end{align*}
again for all $x \in \R$. The claim follows. 
\end{proof}
Finally, following up on the results presented in
Section~\ref{sec:application-2-}, we conclude with a central limit
theorem for sums of centered Rademacher random variables. 
\begin{corollary}
  Let $W = \frac{1}{\sqrt n}\sum_{i=1}^n \xi_i$ with
  $\xi_i, i=1, \ldots, n$ independent centered with support in
  $\left\{ -1, 1 \right\}$. Fix $\mathcal{D}f = f(x+1) - f(x-1)$ {and} 
  let $\tau_i(\xi_i) = \mathbb{I}(\xi_i = 1)$. The
  $\tau_i(\xi_i)_{i=1, \ldots, n}$ are  $\mathcal{D}$-Stein
  kernels for $(\xi_i)_{i=1, \ldots, n}$ and 
  \begin{equation}
    \label{eq:47}
   d_{\mathrm{Wass}} (W, Z) \le  \frac{3}{\sqrt{n}}.
  \end{equation}
\end{corollary}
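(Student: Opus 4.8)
The plan is to apply Proposition~\ref{sec:sums-indep-rand-1} with the scaled difference operator $\mathcal{D}f(x) = f(x+1) - f(x-1)$ (so $\delta$ will be $1/\sqrt n$ after suitable rescaling), verify that the proposed $\tau_i$ are indeed $\mathcal{D}$-Stein kernels, and then check that the hypotheses of Lemma~\ref{couplinglemma} hold so that the bound \eqref{eq:34} applies.

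First I would pin down the setup. The random variables $\xi_i$ take values in $\{-1,1\}$, so they live on the grid $\mathcal{X}_i = 2\Z - 1$ (or, more conveniently, on $\Z$), and $W = n^{-1/2}\sum_i \xi_i$ lives on $\delta\Z$ with $\delta = 1/\sqrt n$ and variance $\sigma^2 = \frac1n\sum_i \Var(\xi_i) \le 1$ (and $=1$ in the symmetric Rademacher case). The operator $\mathcal{D}f(x) = f(x+1) - f(x-1)$ has $\mathcal{D}^\star$ and $l$ determined by a product rule of the form in Assumption~\ref{ass:prodrule}; one checks $\mathcal{D}(f(x)g(x+l)) = g(x)\mathcal{D}f(x) + f(x)\mathcal{D}^\star g(x)$ with an appropriate backward-type $\mathcal{D}^\star$ and $l = -1$ (analogous to Example~\ref{ex:discm} but with step $2$). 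The key verification is the defining identity \eqref{eq:51}: for $\xi$ taking values $\pm1$ with $p = P(\xi=1)$ and mean $\nu = 2p-1 = 0$, one computes directly that $\E[(\xi - \nu)g(\xi)] = p\,g(1) - (1-p)\,g(-1)$, while $\E[\tau(\xi)\mathcal{D}^\star g(\xi - l)]$ with $\tau(\xi) = \mathbb{I}(\xi=1)$ picks out only the atom at $\xi=1$ and yields, after unwinding $\mathcal{D}^\star g(\xi+1) = \frac12(g(\xi+1) - g(\xi-1))$ or its analogue, the same expression; so $\tau_i(\xi_i) = \mathbb{I}(\xi_i=1)$ is a Stein kernel for $\xi_i$. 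Then Proposition~\ref{prop:sumwithscale} (via \eqref{eq:52} for the rescaling by $n^{-1/2}$) gives $\tau_W(W) = \frac1n\sum_i \E[\tau_i(\xi_i)\mid W]$ and the Stein discrepancy bound \eqref{eq:44}.

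Next I would bound $S(W)$. Each $\tau_i(\xi_i) = \mathbb{I}(\xi_i=1)$ is a Bernoulli$(p_i)$ random variable with $p_i = P(\xi_i=1)$, so $\Var(\tau_i(\xi_i)) = p_i(1-p_i) \le \frac14$. Hence by \eqref{eq:44},
\begin{equation*}
S(W) \le \frac1n\sqrt{\sum_{i=1}^n \Var(\tau_i(\xi_i))} \le \frac1n \sqrt{n/4} = \frac{1}{2\sqrt n}.
\end{equation*}
Plugging into \eqref{eq:34} with $\delta = 1/\sqrt n$ and $\sigma^2 \le 1$ gives
\begin{equation*}
d_{\mathrm{Wass}}(W,Z) \le S(W) + (1+\sigma^2)\delta \le \frac{1}{2\sqrt n} + \frac{2}{\sqrt n} = \frac{5}{2\sqrt n},
\end{equation*}
which is already of the right order; getting the stated constant $3$ is then just a matter of bookkeeping, and in fact the crude bound $\frac{5}{2\sqrt n} \le \frac{3}{\sqrt n}$ already suffices.

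\textbf{The main obstacle} I anticipate is not the arithmetic but the careful verification that all the denseness and domain hypotheses in Lemma~\ref{couplinglemma} and Proposition~\ref{sec:sums-indep-rand-1} genuinely hold for this particular $\mathcal{D}$ and for lattice-supported $W$ — in particular that $\mathrm{Id} - \nu_i \in \mathrm{Im}(\mathcal{T}_i)$, that $\mathrm{Id} \in \mathrm{Im}(\mathcal{T}_W)$, and that the relevant class of functions $\mathcal{D}^\star g$ is dense in $L^1(\mu)$; these are finite-support or sufficiently-integrable conditions that should hold routinely here but require a line or two to confirm, and one must also double-check that the solution $g_h$ to the Gaussian Stein equation lies in the intersection of domains $dom(\mathcal{D}, W, \tau_W) \cap dom(\mathcal{D}, X, \tau_X)$, which follows from the standard boundedness of $g_h$ and $g_h'$ quoted in the proof of Proposition~\ref{sec:sums-indep-rand-1}. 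Once these are in place the corollary follows immediately by specializing that proposition.
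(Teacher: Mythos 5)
Your proposal follows the paper's proof exactly: bound $S(W)$ via the variance inequality \eqref{eq:44} and then invoke Proposition~\ref{sec:sums-indep-rand-1} with $\delta = 1/\sqrt{n}$. One small simplification: since the $\xi_i$ are centered with support in $\{-1,1\}$ they are forced to be symmetric Rademacher, so $\sigma^2 = 1$ and $\mathrm{Var}(\tau_i(\xi_i)) = 1/4$ exactly (rather than merely $\le 1$ and $\le 1/4$), which is also why the paper treats the Stein-kernel verification as immediate.
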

\begin{proof}
 The first claim is immediate. Next we use \eqref{eq:44} to deduce that 
\begin{align*}
 S(W)&  \le  \frac{1}{n}
       \sqrt{\sum_{i=1}^n\mbox{Var}(\tau_i(\xi_i))} =
       \frac{1/2}{\sqrt{n}}. 
\end{align*}
Finally  we apply   \eqref{eq:34} with $\sigma^2=1$ and $\delta =
  \frac{1}{\sqrt{n}}$. 
\end{proof}
\begin{remark}
  It is straightforward to extend the results of this Section to
  random sums of independent random variables and therefore deduce
  central limit theorems for randomly centered random variables. A
  much more challenging task is to deal with non-randomly centered
  random sums, as e.g.\ in \cite{dobler2012rates}.  
\end{remark}
\section{Stein bounds}
\label{sec:examples-again}

As anticipated, in this section we discuss several non-asymptotic
approximation via Stein differentiation in several concrete examples.
The main purpose of this Section is illustrative and most of the
examples we discuss lead to well-known situations. Relevant references
are given in the text.

\subsection{Binomial approximation to the Poisson-binomial distribution} 
\label{sec:binom-appr-poiss}

An immediate application of Proposition \ref{genprop2} can be found in
binomial approximation for a sum of independent Bernoulli random
variables. Writing $X$ for a ${\rm Bin}(n,p)$ and $W=\sum_{i=1}^nX_i$
with $X_i\sim {\rm Bin}(1,p_i)$, $i=1,\ldots,n$, and
$np=\sum_{i=1}^np_i$ (the distribution of $W$ is called a
Poisson-binomial distribution, see e.g. \cite{ehm1991binomial}), we
readily compute $$\tau_X(x)=(1-p)x \mbox{ and }
\tau_{X_i}(x)=(1-p_i)x.$$ Here we use $\cD = \Delta^+$, the forward
difference.  Thus for any measurable function $h$ such that $\E |
h(X) | < \infty$ and $\E |h(W)| <\infty$ ,
\begin{align}
| \E h(X) - \E h(W) |&\leq || \cD g_h||_\infty \E \left| (1-p)W-
  \sum_{i=1}^n  (1-p_i) X_i  \right| \nonumber \\
&\leq  ||  \cD g_h||_\infty \sum_{i=1}^n 
|p_i  -p | p_i  .  \label{expression1} 
\end{align}

 An alternative angle on this problem is to use the score
function approach, although here with $\mathcal{T}(Id)$ instead of
$\mathcal{T}(1)$.  It is easy to show (see e.g. Example
\ref{ex:listoftaus}.2.(b)) that
$$\mathcal{T}_{Bin(n,p)}(f)(x) =\frac{p(n-x)}{(1-p)(x+1)} f(x+1) - f(x)$$
so that for $f=Id$, the identity function, 
$$ \mathcal{T}_{Bin(n,p)}(Id)(x) =
 \frac{np-x}{1-p}.
$$ 
By Example \ref{ex:discmCONT} we find that $f=Id \in {\mathcal{F}}(X)
\cap {\mathcal{F}}(W)$ because $Id(0)=0$.  Now let $h $  be such that
$\E |h(X)|<\infty$ and $\E |h(W)|<\infty$, and let  $g_h
=\mathcal{T}_X^{-1}(h-\E h[X])/ {Id} $; then $g \in dom(\Delta^+,W, Id)
\cap dom(\Delta^+,X,Id)$. From \eqref{eq:start} we obtain that
$$
\E h (W)  - \E h (X)   = \E \left[
   g_h(W+ 1) \left\{ \mathcal{T}_{Bin(n,p)}(Id) (W) -
     \mathcal{T}_{\mathcal{L}(W)}  (Id)(W)  \right\}  \right] .
$$
By \eqref{eq:basicstein}, using the notation $g_a(x) = g(x+a)$ for a
function in $x$,  
\beas
\lefteqn{\E  g_h(W+ 1)  \mathcal{T}_{\mathcal{L}(W)} (Id) (W)}\\
  &=& - \E W\Delta^{-} g(W+ 1) \\
&=& - \sum_{i=1}^n \E \left[ \E \left\{ X_i  \Delta^{-} g_{\sum_{j \ne i} X_j+ 1} (X_i) | X_j, j \ne i \right\} \right] \\
&=& \sum_{i=1}^n \E \left[ \E \left\{  \mathcal{T}_{Bin(1,p_i)}(Id) (X_i)  g_{\sum_{j \ne i} X_j+ 1} (X_i) | X_j, j \ne i \right\} \right] \\
&=&  \sum_{i=1}^n \E \left\{ g(W+ 1)  \mathcal{T}_{Bin(1,p_i)} (Id)(X_i)  \right\}.
\enas 
Hence
\beas
\lefteqn{\E h (W)  - \E h (X)  }\\
& =&  \E \left[
   g_h(W+ 1)  \left\{ \mathcal{T}_{Bin(n,p)}(Id)(W) -   \sum_{i=1}^n
     \mathcal{T}_{Bin(1,p_i)}(Id) (X_i)   \right\}  \right]\\ 
&=& \E \left[
   g_h(W+ 1)   \left\{ \frac{np-W}{1-p} - \sum_{i=1}^n  \frac{p_i-X_i}{1-p_i} \right\}  \right]\\
&=& \E \left[   g_h(W+ 1) \sum_{i=1}^n (p_i - X_i)  \left\{ \frac{1}{1-p} -  \frac{1}{1-p_i} \right\}  \right]
\enas 
and so 
\begin{equation} \label{expression2}
  | \E h(X) - \E h(W) | \le   \frac{\| g_h\|_{\infty}}{1-p} \sum_{i=1}^n | p - p_i| 
\E  \left|  \frac{p_i-X_i}{1-p_i}\right| = \frac{ 2 \| g_h\|_{\infty}}{1-p} \sum_{i=1}^n | p - p_i| p_i .
\end{equation}
 The fact that we obtain two different bounds, {\eqref{expression1} and \eqref{expression2},} for the same problem illustrates the freedom of choice in specifying $f$ and $g$ in the Stein equation. 
In \cite{ehm1991binomial}, bounds for  $\sup_x| \cD \frac{g_h(x)}{x+1}
|$ are calculated, and in \cite{eichelsbacher2008stein} a bound for
$\sup_x|  \frac{g_h(x)}{x+1} |$ is given.  

\subsection{Distance between Gaussians}
\label{sec:gauss}

Consider two centered Gaussian random variables $X_1$ and $X_2$ with
respective variances $\sigma_1^2\le\sigma_2^2$, say. Denote $\phi$ the
density of $Z$, a standard normal random variable.  The canonical Stein
operators are then of the form
\begin{align*}
  \mathcal{T}_i f(x) = f'(x) - \frac{x}{\sigma_i^2} f(x)
\end{align*}
acting on the classes $\mathcal{F}_1(X_1) = \mathcal{F}_2(X_2) =
\mathcal{F}(Z)$ of $Z$-integrable differentiable functions such that
$\left( f \phi \right)' \in L^1(dx)$.  In this simple toy-setting it
is possible to write out \eqref{eq:start}   in full
generality. Indeed we have
\begin{align*}
 f_1 g_h & =   \mathcal{T}_1^{-1}(h-\E_1h) \\
 & =  
  e^{x^2/(2\sigma_1^2)} \int_{-\infty}^x (h(y) - \E h(X_1))
  e^{-y^2/(2\sigma_1^2)} dy \\
& = e^{\left( {x}/{\sigma_1} \right)^2/2} \sigma_1
\int_{-\infty}^{x/\sigma_1} \left( h(\sigma_1u) - \E h(\sigma_1Z
\right) e^{-u^2/2}du
  & =:\sigma_1g_{\tilde h,0}(x/\sigma_1)
\end{align*}
with $\tilde{h}(u) = h(\sigma_1 u)$ and  $g_{h,0}$ the solution of the classical Stein
equation given by 
\begin{equation*}
  g_{h,0}(x) = e^{x^2/2} \int_{-\infty}^x (h(y) - \E h(Z))
  e^{-y^2/2}dy. 
\end{equation*}
In the particular case where
one is interested in the total variation distance, then one only
considers $h : \R \to [0,1]$ Borel functions for {which} 
$
 \| g_{h,0}\| \le \sqrt{\frac{\pi}{2}}$ and $ \|  g_{h,0}' \|
 \le 2 
$  (see
e.g. \cite[Theorem 3.3.1]{NP11}).
In the rest of this section we focus on such $h$, although similar
results are available for $h = \mathbb{I}_{(-\infty, z]}$ (leading to
bounds on the Kolmogorov distance, see \cite[Lemma 2.3]{ChGoSh11}) and for  $h \in Lip(1)$ (leading to
bounds on the Wasserstein
distance, see \cite[Proposition 3.5.1]{NP11}). 
Identity \eqref{eq:start} becomes
\begin{align*}
  \E h(X_2) - \E h(X_1) & = \E \left[ (f_1(X_2) - f_2(X_2)) \left(
      \frac{\sigma_1 g_{\tilde{h},0}(X_2/\sigma_1)}{f_1(X_2)} \right)' \right. \\
&\quad \quad \left. + \left( \mathcal{T}_1f_1(X_2) - \mathcal{T}_2f_2(X_2) \right) \left(
      \frac{\sigma_1g_{\tilde{h},0}(X_2/\sigma_1)}{f_1(X_2)} \right) \right].
\end{align*}
 for any  $f_1, f_2 \in \mathcal{F}(Z)$.  There are many directions
 that can be taken from here, of which we illustrate three (to simplify notation we write
 $g_h$ for $g_{\tilde{h},0}$). 
 \begin{itemize}
\item Taking $f_1 = 1$ and $f_2 = 0$ (see Remark \ref{rem:takingf0})  leads to the identity 
  \begin{equation*}
     \E h(X_2) - \E h(X_1) = \E \left[ g_h' \left(
         \frac{X_2}{\sigma_1} \right) - \frac{X_2}{\sigma_1} g_h\left(
         \frac{X_2}{\sigma_1} \right)  \right] 
  \end{equation*}
because $\mathcal{T}_1(1)(x)  = -x/\sigma_1^2$. 
Recalling that $\E \left[ X_2 \zeta(X_2) \right]  =  \sigma_2^2 \E \left[
  \zeta'(X_2) \right]$ for any differentiable function $\zeta$, and also
noting  that one can interchange the roles of $X_1$ and $X_2$, we
deduce the bound 
\begin{equation} \label{gauss1}
  d_{\rm TV}(X_1, X_2) \le \frac{2}{\sigma_2^2} \left|
    \sigma_1^2-\sigma_2^2 \right|, 
\end{equation}
already obtained e.g. in \cite[Proposition 3.6.1]{NP11}. 
\item  Taking $f_1 = \sigma_1^2$ and $f_2 = \sigma_2^2$ (thus a
  particular case of the
  comparison of kernels from Corollary \ref{cor:compstfac}) {also} yields {\eqref{gauss1}}.
 \item Taking $f_1 = f_2 = 1$ (thus a particular case of the
   comparison of scores from Corollary~\ref{cor:compscor}) yields the
   identity
 \begin{equation*}
 \E h(X_2) - \E h(X_1)  = \E \left[ X_2 \left(
     \frac{1}{\sigma_1^2}-\frac{1}{\sigma_2^2} \right)  \left(
     {\sigma_1}g_{h,0}\left( \frac{X_2}{\sigma_1} \right)
   \right)\right] 
 \end{equation*}
because $\mathcal{T}_i(1)(x) = -x/\sigma_i^2$. Using $\E \left| X_2 \right|
= \sqrt{\frac{2}{\pi}}\sigma_2$ and
$\|\sigma_1 g_{h,0}(\cdot
/\sigma_1)\|_{\infty} \le \sigma_1\sqrt{\frac{\pi}{2}} $ leads to 
\begin{equation*}
  d_{\rm TV}(X_1, X_2) \le  \frac{\left|
     \sigma_1^2-\sigma_2^2 \right|}{\sigma_1 \sigma_2},
\end{equation*}
which is better than {\eqref{gauss1}} 
whenever $\sigma_2/\sigma_1
< 2$. 

\end{itemize}

\subsection{From Student to Gauss}
\label{sec:studgauss}


Set $X_1=Z$ standard Gaussian and $X_2=W_\nu$ a Student $t$ random
variable with $\nu>2$ degrees of freedom. In this case  the Stein
kernels for both distributions are well defined and given,
respectively, by  $\tau_1=1$
and $\tau_2(x)=\frac{x^2+\nu}{\nu-1}$, see Example \ref{ex:listoftaus}. 
All assumptions in Corollary~\ref{cor:compstfac} are satisfied so that
we can plug these functions with
$\mathcal{H}$ the class of Borel functions in $[0,1]$ to get 
\begin{equation}
  \label{eq:14}
  d_{\mathrm{TV}}(Z,W_{\nu}) \le 2 \E \left|\frac{W_{\nu}^2+\nu}{\nu -1} -
       1  \right|
\end{equation}
where, as in the previous example, we make use of our knowledge on the
solutions of the Gaussian Stein equation.  It is
straightforward to compute \eqref{eq:14} explicitly (under the
assumption $\nu>2$, otherwise the expectation does not exist) to get
\begin{equation}
  \label{eq:16}
  d_{\mathrm{TV}}(Z,W_{\nu}) \le  \frac{4}{\nu-2}.
\end{equation}
A similar result is obtained with 
Corollary~\ref{cor:compscor}, namely 
\begin{equation*}
  d_{\mathrm{TV}}(Z,W_{\nu}) \le \sqrt{\frac{\pi}{2}} 
   \frac{-2+8 \left( \frac{\nu}{1+\nu} \right)^{(1+ \nu)/2}}{(\nu-1)
     \sqrt \nu B(\nu/2, 1/2)}, 
\end{equation*}
which is of the same order as \eqref{eq:16}, with a better
constant, but arguably much less elegant. 
\begin{remark}
It is of course possible to exchange the roles of the Student and the
Gaussian in the above computations. 
\end{remark}
 

\subsection{Exponential approximation}
\label{sec:expon}

Let $X_{(n)}$ be the maximum of $n$ i.i.d.~uniform random variables on
$[0,1]$. It is known that $M_n=n(1-X_{(n)})$ converges in distribution
to $X_1$ a rate-1 exponential random variable.  Note that $\E [M_n] =
\frac{n}{n+1}\neq1$. In order to apply Corollary~\ref{cor:compstfac}
most easily we are led to consider the slightly transformed random
variable $ X_2 = \frac{n+1}{n}M_n = (n+1)(1-X_{(n)})$. 

The canonical operator for $X_1$ is  $\mathcal{T}_1 f = f'-f$ acting on the class of
differentiable $f$ such that $f(0) = 0$. The Stein equation
\eqref{eq:genstequationfg} becomes
  \begin{equation*}
    h(x) - \E h(X_1) = f(x) g'(x) + f'(x) g(x) - g(x) f(x) = (fg)'(x) - (fg)(x). 
\end{equation*}
Then the solution pairs $(f,g)= (f_h, g_h)$ are such that $(fg)(x) =\mathcal{T}_{\rm exp}^{-1}(h) $ so that
\begin{equation} \label{expsol} (f g) (x) = e^x \int_0^x (h(u) - \E
  h(X_1)) e^{-u}du\end{equation} for $x>0$.  If $h(x) = \mathbb{I}(x
\le t)$ we need to understand the properties of  
\begin{equation*}
  (fg) (x)= e^{-(t-x)^+} - e^{-t}.
\end{equation*}
This function is bounded
and differentiable on $\R$, with limit 0 at the left boundary and
constant with value 
$1-e^{-t}$ for all $x \ge t$ (see also \cite[Lemma 3.2]{ChFuRo11}).
Taking $g(x) = x^\epsilon$ in \eqref{expsol} the corresponding function $f$
from \eqref{expsol} is $$f_{t,
  \epsilon} (x) = x^{- \epsilon} \left( e^{-(t-x)^+} - e^{-t}
\right)$$ 
with $a^+ = \max (a, 0)$. 
For all choices  $0< \epsilon < 1$ we have 
\begin{equation*}
  \lim_{x \to 0} f_{t, \epsilon}(x) = 0 \mbox{ and } \lim_{x\to
    \infty}f_{t, \epsilon}(x) = 0 \mbox{ and }     \left\| f_{t,
      \epsilon}    \right\|_{\infty} = t^{-\epsilon}(1-e^{-t}), 
\end{equation*}
as well as 
$  \lim_{x\to 0}f_{t, 1}(x) = e^{-t}$ (see \cite{ChFuRo11} for details
on the cases $\epsilon=0$ and $\epsilon=1$).

We now turn our attention to the problem of approximating the law of
$X_2$, whose  density  is $  p(x)  = \frac{n}{n+1} ( 1- \frac{x}{n+1}
)^{n-1}$ with support $[0, n+1]$. Taking derivatives we get
$$\mathcal{T}_2 f (x) = f'(x) -\frac{n-1}{n+1-x} f(x)
$$
acting, as above, on the class of differentiable functions such that
$f(0) = 0$. Clearly $f_{t,  \epsilon}(0)g_\epsilon(0) = 0$ for all
$0<\epsilon<1$  and therefore
\begin{eqnarray*}
P(X_2\le t) - P(X_1\le t) &=& \E [ (f_{t, \epsilon}g_{\epsilon})'(X_2) - (f_{t, \epsilon}g_{\epsilon})(X_2)]\\
&=& \E \left[   (f_{t, \epsilon}g_{\epsilon})(X_2) \left\{ \frac{n-1}{n+1-X_2}  - 1 \right\}  \right].
\end{eqnarray*} 
which yields the non-uniform bound
\begin{equation}\label{eq:19}
  | P( X_1 \le t) - P (X_2 \le t)| 
\le   t^{-\epsilon}(1-e^{-t}) \E  \left[
  X_2^{\epsilon}  \left|\frac{n-1}{n+1-X_2}  - 1 \right| \right].
\end{equation} 
The quantity on the rhs of \eqref{eq:19} can be optimised numerically
in $(\epsilon, t)$. For example for $n = 100$ and $t=1/2$, we can
compute the upper bound at $\epsilon=0$ to get 0.00497143 and
0.00852033 at $\epsilon=1$. The optimal choice of $\epsilon$ in this
case is $\epsilon \approx 0.138$ for which the bound is 0.00488718.
Obviously, in this simple situation, it is also easy to evaluate the
expressions $\Delta(t) = \sup_t \left| P(X_2\le t) - P(X_1\le t)
\right|$ numerically; 
{explorations} show that there is some
interesting optimization (depending on the magnitude of $t$) to be
performed in order to obtain good bounds.


\subsection{Gumbel approximation}
\label{sec:gumbel}

Let $X_{(n)}$ be the maximum of $n$ \mbox{i.i.d.} exponential random
variables. It is known that $M_n=X_{(n)}-\log n$ converges in
distribution to $X_1$ a Gumbel random variable with density $p(x) =
e^{-x}e^{-e^{-x}}$ on $\R$. The Stein kernel of the Gumbel does not take
on a tractable form, hence we shall here rather use
Corollary~\ref{cor:compstfac} with another choice of function
$\omega$.

A natural choice for $\omega$ is the score function, here $u_{\rm
  Gumbel}(x)=e^{-x}-1$, since in this case 
$\mathcal{T}_{\rm Gumbel}^{-1}(u_{\rm Gumbel})=1$.
As for the exponential example, we here also run into the difficulty
that $\E [e^{-M_n}-1] = \frac{n}{n+1}-1\neq0$, leading us to consider
the transformed random variable $ X_2 = M_n+\log\frac{n}{n+1}.$ Simple
calculations give $\mathcal{T}_2^{-1}(e^{-x}-1)=1- \frac{e^{-x}}{n+1}$
and we can use Corollary~\ref{cor:compstfac} to obtain
\begin{align*}
  |\E h(X_2) - \E h(X_1)|& \leq || g_h'||_\infty \E \left| 1 - \left(  1- \frac{e^{-X_2}}{n+1} \right) \right| \\
& =   || g_h'||_\infty \frac{1}{n+1}\E e^{-X_2}
\end{align*}
with $g_h(x)=\mathcal{T}_{\rm Gumbel}^{-1}(h)$. Since, furthermore, $\E  e^{-X_{2}}  = 1$ we deduce 
\begin{equation*}
   |\E h(X_2) - \E h(X_1)| \le \frac{1}{n+1} || g_h'||_\infty.
\end{equation*}
Again it is easy to express $g_h$ explicitly in most cases. For
example,  taking $h(x) =
\mathbb{I}(x \le t)$ we readily compute $g_h(x) = e^x \left(
  e^{-(e^{-t}-e^{-x})^+}-e^{-e^{-t}} \right)$ which can be shown to
satisfy $\| g_h\| \le e^t(1-e^{-e^{-t}}) \le 1$ and $\| g_h' \| \le
1$. This provides  the uniform bound
\begin{align*}
|  P(X_2 \le t) - P(X_1 \le t)| \le \frac{1}{n+1},
\end{align*}
which is of comparable order (though with a worse constant) with,
e.g., \cite{hallwell1979rate}.  


\section*{Acknowledgements}
{This work has been initiated 
{when}
  Christophe Ley and Yvik Swan were visiting Keble College,
  Oxford. Substantial progress was also made during a stay at the CIRM
  in Luminy. Christophe Ley thanks the Fonds National de la Recherche
  Scientifique, Communaut\'{e} fran\c{c}aise de Belgique, for support
  via a Mandat de Charg\'{e} de Recherche FNRS.  Gesine Reinert was
  supported in part by EPSRC grant EP/K032402/1.  Yvik Swan gratefully
  acknowledges support from the IAP Research Network P7/06 of the
  Belgian State (Belgian Science Policy).  We thank C. Bartholm\'e for
  discussions which led to the application given in
  Section~\ref{sec:rates-conv-frech}.  The authors would further like
  to thank Oliver Johnson, Larry Goldstein, Giovanni Peccati and
  Christian D\"obler for the many discussions about Stein's method
  which have helped shape part of this work. In particular, we thank
  Larry for his input on Section~\ref{sec:conn-with-bias}, Christian
  for the idea behind Section~\ref{sec:distr-satisfy-diff} and Oliver
  for the impetus behind the computations shown in
  Section~\ref{sec:binom-appr-poiss}.}
 

\begin{thebibliography}{}
\bibitem{AfBalPa2014}
G. Afendras, N. Balakrishnan, and N. Papadatos,
  \emph{Orthogonal polynomials in the cumulative ord family and its application
  to variance bounds}, Preprint arXiv:1408.1849 (2014).
\bibitem{APP11} 
G. Afendras, N. Papadatos, and V. Papathanasiou, \emph{An extended
  {S}tein-type covariance identity for the {P}earson family with applications
  to lower variance bounds}, Bernoulli \textbf{17} (2011), 507--529.
\bibitem{arras2016stein}
B. Arras, E. Azmoodeh, G. Poly, and Y. Swan, \emph{Stein's
  method on the second wiener chaos: 2-wasserstein distance}, arXiv preprint
  arXiv:1601.03301 (2016).
\bibitem{BaRiSt89}
P.~Baldi, Y.~Rinott, and C.~Stein, \emph{A normal approximation for the number
  of local maxima of a random function on a graph}, Probability, Statistics,
  and Mathematics, Academic Press, Boston, MA, 1989, pp.~59--81. 
\bibitem{Ba90}
A.~D.  Barbour, \emph{Stein's method for diffusion approximations}, Probability
  Theory and Related Fields \textbf{84} (1990), 297--322.

\bibitem{BC05}
A.~D.  Barbour and L. H.~Y. Chen, \emph{An introduction to {S}tein's method},
  Lect. Notes Ser. Inst. Math. Sci. Natl. Univ. Singap., vol.~4, Singapore
  University Press, Singapore, 2005.

\bibitem{BGX13}
A.~D . Barbour, H.L.~Gan, and A.~Xia, \emph{Stein factors for negative binomial
  approximation in {W}asserstein distance}, Bernoulli \textbf{21} (2015), 1002--1013. 

\bibitem{BaHoJa92}
A.~D. Barbour, L. Holst, and S. Janson, \emph{Poisson approximation},
  Oxford Studies in Probability, vol.~2, The Clarendon Press Oxford University
  Press, New York, 1992, Oxford Science Publications. 

\bibitem{barbour1985multiple}
A.~D. Barbour and G.K.~Eagleson, \emph{Multiple comparisons and sums of
  dissociated random variables}, Advances in Applied Probability \textbf{17}
  (1985), 147--162.

\bibitem{brown1995stein}
T.~C. Brown and A. Xia, \emph{On Stein-Chen factors for poisson
  approximation}, Statistics \&amp; Probability Letters \textbf{23} (1995),
  327--332.

\bibitem{CPP01}
T. Cacoullos, N. Papadatos, and V. Papathanasiou, \emph{An application of a
  density transform and the local limit theorem}, Teor. Veroyatnost. i
  Primenen. \textbf{46} (2001),  803--810. 

\bibitem{CP89}
T. Cacoullos and V. Papathanasiou, \emph{Characterizations of distributions by
  variance bounds}, Statistics \&amp; Probability Letters \textbf{7} (1989),  351--356.
  

\bibitem{cacoullos1998short}
T. Cacoullos, N. Papadatos, and V. Papathanasiou,
  \emph{Variance inequalities for covariance kernels and applications to
  central limit theorems}, Theory of Probability \& Its Applications
  \textbf{42} (1998), . 149--155.

\bibitem{CP95}
T. Cacoullos and V. Papathanasiou, \emph{A generalization of
  covariance identity and related characterizations}, Math. Methods Statist.
  \textbf{4} (1995), 106--113. 

\bibitem{CPU94}
T. Cacoullos, V. Papathanasiou, and S.~A. Utev,
  \emph{Variational inequalities with examples and an application to the
  central limit theorem}, The Annals of Probability \textbf{22} (1994), 
  1607--1618. 

\bibitem{chatterjee2014short}
S. Chatterjee, \emph{A short survey of {S}tein's method}, Preprint
  arXiv:1404.1392 (2014).

\bibitem{ChFuRo11}
S. Chatterjee, J. Fulman, and A. R\"ollin, \emph{Exponential
  approximation by exchangeable pairs and spectral graph theory}, ALEA Latin
  American Journal of Probability and Mathematical Statistics \textbf{8}
  (2011), 1--27.

\bibitem{ChMe08}
S. Chatterjee and E. Meckes, \emph{Multivariate normal approximation
  using exchangeable pairs}, ALEA Latin American Journal of Probability and
  Mathematical Statistics \textbf{4} (2008), 257--283.

\bibitem{CS11}
S. Chatterjee and Q.-M. Shao, \emph{Nonnormal approximation by {S}tein's
  method of exchangeable pairs with application to the {C}urie-{W}eiss model},
  The Annals of Applied Probability \textbf{21} (2011), 464--483.
   

\bibitem{C75}
L. H.~Y. Chen, \emph{Poisson approximation for dependent trials}, The Annals
  of Probability \textbf{3} (1975),  534--545.  

\bibitem{Ch80}
L. H.~Y. Chen, \emph{An inequality for multivariate normal distribution}, Tech.
  report, MIT, 1980.

\bibitem{ChGoSh11}
L. H.~Y. Chen, L. Goldstein, and Q.-M. Shao, \emph{Normal approximation
  by {S}tein's method}, Probability and its Applications (New York), Springer,
  Heidelberg, 2011.  

\bibitem{daly2008upper}
F. Daly, \emph{Upper bounds for {S}tein-type operators}, Electronic Journal
  of Probability \textbf{13} (2008), 566--587.

\bibitem{DZ91}
P. Diaconis and S. Zabell, \emph{Closed form summation for classical
  distributions: variations on a theme of de {M}oivre}, Statistical Science
  \textbf{6} (1991),  284--302. 

\bibitem{dobler2012rates}
C. D{\"o}bler, \emph{On rates of convergence and Berry-Esseen bounds for
  random sums of centered random variables with finite third moments}, arXiv
  preprint arXiv:1212.5401 (2012).

\bibitem{Do14}
C. D\"obler, \emph{Stein's method of exchangeable pairs for the beta
  distribution and generalizations}, Electronic Journal of Probability
  \textbf{20} (2015), 1--34.

\bibitem{dobler2015iterative}
C. D{\"o}bler, R.~E. Gaunt, and S.~J. Vollmer, \emph{An
  iterative technique for bounding derivatives of solutions of Stein
  equations}, arXiv preprint arXiv:1510.02623 (2015).

\bibitem{EdVi12}
R. Eden and J. Viquez, \emph{{N}ourdin-{P}eccati analysis on {W}iener
  and {W}iener-{P}oisson space for general distributions}, Stochastic Processes and their Applications \textbf{125} (2015), 182--216.
  

\bibitem{ehm1991binomial}
W. Ehm, \emph{Binomial approximation to the poisson binomial distribution},
  Statistics \& Probability Letters \textbf{11} (1991),  7--16.

\bibitem{EiLo10}
P. Eichelsbacher and M. L\"owe, \emph{Stein's method for dependent
  random variables occurring in statistical mechanics}, Electronic Journal of
  Probability \textbf{15} (2010), 962--988.

\bibitem{eichelsbacher2014rates}
P. Eichelsbacher and B. Martschink, \emph{Rates of convergence in the
  blume--emery--griffiths model}, Journal of Statistical Physics \textbf{154}
  (2014), 1483--1507.

\bibitem{eichelsbacher2008stein}
P. Eichelsbacher and G. Reinert, \emph{Stein's method for discrete
  {G}ibbs measures}, The Annals of Applied Probability \textbf{18} (2008),
  1588--1618.

\bibitem{fulman2012stein}
J. Fulman and L. Goldstein, \emph{Stein's method and the rank
  distribution of random matrices over finite fields}, The Annals of Probability  \textbf{43} (2015): 1274--1314.

\bibitem{FulmanGoldstein2014b}
J. Fulman and L. Goldstein, \emph{Stein's method, semicircle distribution, and reduced
  decompositions of the longest element in the symmetric group}, Preprint,
  arXiv:1405.1088 (2014).

\bibitem{gaunt2013stein}
R.~E. Gaunt, \emph{On Stein's method for products of normal random variables
  and zero bias couplings}, Preprint arXiv:1309.4344 (2013).

\bibitem{gaunt2014variance}
R.~E. Gaunt,  \emph{Variance-{G}amma approximation via {S}tein's
  method}, Electronic 
  Journal of Probability \textbf{19} (2014),  1--33.

\bibitem{GS02}
A.~L. Gibbs and F.~E. Su, \emph{On choosing and bounding
  probability metrics}, International Statistical Review / Revue Internationale
  de Statistique \textbf{70} (2002), 419--435 (English).

\bibitem{GR97}
L. Goldstein and G. Reinert, \emph{Stein's method and the zero bias
  transformation with application to simple random sampling}, The Annals of
  Applied Probability \textbf{7} (1997),  935--952. 

\bibitem{GR05}
L. Goldstein and G. Reinert, \emph{Distributional transformations, orthogonal polynomials, and
  {S}tein characterizations}, Journal of Theoretical Probability \textbf{18}
  (2005), 237--260.  

\bibitem{goldstein2013stein}
L. Goldstein and G. Reinert, \emph{Stein's method for the {B}eta distribution and the
  {P}{\'o}lya-{E}ggenberger urn}, Journal of Applied Probability \textbf{50}
  (2013), 1187--1205.

\bibitem{GoRi96}
L. Goldstein and Y. Rinott, \emph{Multivariate normal approximations by
  {S}tein's method and size bias couplings}, Journal of Applied Probability
  \textbf{33} (1996),  1--17. 

\bibitem{GoTi03}
F.~G{{\"o}}tze and A.~N. Tikhomirov, \emph{Rate of convergence to the
  semi-circular law}, Probability Theory and Related Fields \textbf{127}
  (2003), 228--276. 

\bibitem{GoTi06}
F.~G{{\"o}}tze and A.~N. Tikhomirov, \emph{Limit theorems for spectra of random matrices with martingale
  structure}, Teor. Veroyatnost. i Primenen. \textbf{51} (2006), 
  171--192.  

\bibitem{G91}
F. G{{\"o}}tze, \emph{On the rate of convergence in the multivariate
  clt}, The Annals of Probability \textbf{19} (1991),  724--739.

\bibitem{haagerup2012asymptotic}
U. Haagerup and S. Thorbj{\o}rnsen, \emph{Asymptotic expansions for the
  gaussian unitary ensemble}, Infinite Dimensional Analysis, Quantum
  Probability and Related Topics \textbf{15} (2012), no.~01.

\bibitem{hallwell1979rate}
W.J.~Hall and J.~A. Wellner, \emph{The rate of convergence in law of the maximum
  of an exponential sample}, Statistica Neerlandica \textbf{33} (1979), 
  151--154.

\bibitem{hillion2011natural}
E. Hillion, O. Johnson, and Y. Yu, \emph{A natural derivative on
  $[0, n]$ and a binomial {P}oincar{{\'e}} inequality}, Preprint
  arXiv:1107.0127 (2011).

\bibitem{Ho04}
S. Holmes, \emph{Stein's method for birth and death chains}, Stein's method:
  expository lectures and applications, IMS Lecture Notes Monogr. Ser.,
  vol.~46, Inst. Math. Statist., Beachwood, OH, 2004, pp.~45--67.  

\bibitem{Jo04}
O. Johnson, \emph{Information Theory and the Central Limit Theorem},
  Imperial College Press, London, 2004.  

\bibitem{MR2128239}
O. Johnson and A. Barron, \emph{Fisher information inequalities and the
  central limit theorem}, Probability Theory and Related Fields \textbf{129}
  (2004), 391--409. 

\bibitem{johnson1993note}
R.~W. Johnson, \emph{A note on variance bounds for a function of a pearson
  variate}, Statistics \& Risk Modeling \textbf{11} (1993), 273--278.

\bibitem{K85}
C. A.~J. Klaassen, \emph{On an inequality of {C}hernoff}, The Annals of
  Probability \textbf{13} (1985),  966--974.  

\bibitem{korwar1991characterizations}
R.M.~Korwar, \emph{On characterizations of distributions by mean absolute
  deviation and variance bounds}, Annals of the Institute of Statistical
  Mathematics \textbf{43} (1991),  287--295.

\bibitem{KuTu11}
S. Kusuoka and C.~A. Tudor, \emph{{S}tein's method for invariant
  measures of diffusions via {M}alliavin calculus}, Stochastic Processes and
  their Applications \textbf{122} (2012), 1627--1651.

\bibitem{kusuoka2013extension}
S. Kusuoka and C.~A. Tudor, \emph{Extension of the fourth moment theorem to invariant measures of
  diffusions}, Preprint arXiv:1310.3785 (2013).

\bibitem{ledoux2014stein}
M. Ledoux, I. Nourdin, and G. Peccati, \emph{Stein's method,
  logarithmic Sobolev and transport inequalities},  Geometric and Functional Analysis \textbf{25} (2015), 256--306.

\bibitem{lefevre2002generalized}
C. Lef{\`e}vre, V. Papathanasiou, and S. Utev, \emph{Generalized
  pearson distributions and related characterization problems}, Annals of the
  Institute of Statistical Mathematics \textbf{54} (2002), 731--742.

\bibitem{ley2015distances}
C. Ley, G. Reinert, and Y. Swan, \emph{Distances between nested
  densities and a measure of the impact of the prior in bayesian statistics},
  arXiv preprint arXiv:1510.05826 (2015).

\bibitem{swangeneral}
C. Ley and Y. Swan, \emph{A general parametric stein
  characterization}, Statistics \& Probability Letters, \textbf{111}, (2016),  67--71.

\bibitem{LS13b}
C. Ley and Y. Swan, \emph{Local Pinsker inequalities via Stein's discrete density
  approach}, IEEE Transactions on Information Theory \textbf{59} (2013), 
  5584--4491.

\bibitem{LS12a}
C. Ley and Y. Swan, \emph{Stein's density approach and information inequalities},
  Electronic Communications in Probability \textbf{18} (2013),  1--14.

\bibitem{ley2013parametric}
C. Ley and Y. Swan, \emph{Parametric {S}tein operators and variance bounds}, Brazilian
  Journal of Probability and Statistics \textbf{to appear} (2016).

\bibitem{loh2004characteristic}
W.-L. Loh, \emph{On the characteristic function of {P}earson type iv
  distributions}, A Festschrift for Herman Rubin, Institute of Mathematical
  Statistics, 2004, pp.~171--179.

\bibitem{Luk1994}
H.~M. Luk, \emph{Stein's method for the gamma distribution and related
  statistical applications}, Ph.D. thesis, University of Southern California,
  1994.

\bibitem{NoPe09}
I. Nourdin and G. Peccati, \emph{Stein's method on {W}iener chaos},
  Probability Theory and Related Fields \textbf{145} (2009),  75--118.
 

\bibitem{NP11}
I. Nourdin and G. Peccati, \emph{Normal approximations with {M}alliavin calculus~: from {S}tein's
  method to universality}, Cambridge Tracts in Mathematics, Cambridge
  University Press, 2012.

\bibitem{MR2527030}
I. Nourdin, G. Peccati, and G. Reinert, \emph{Second order
  {P}oincar{\'e} inequalities and {CLT}s on {W}iener space}, Journal of
  Functional Analysis \textbf{257} (2009), 593--609. 

\bibitem{nourdin2013entropy}
I. Nourdin, G. Peccati, and Y. Swan, \emph{Entropy and the fourth
  moment phenomenon}, Journal of Functional Analysis \textbf{266} (2014),
  3170--3207.

\bibitem{nourdin2013integration}
I. Nourdin, G. Peccati, and Y. Swan, \emph{Integration by parts and representation of information
  functionals}, IEEE International Symposium on Information Theory (ISIT)
  (2014), 2217--2221.
  
  \bibitem{novak}
  S.Y. Novak, \emph{Extreme Value Methods with Applications to Finance}, Chapman \& Hall/CRC Press, Boca Raton, 2011. 

\bibitem{ord1967system}
J.K.~Ord, \emph{On a system of discrete distributions}, Biometrika \textbf{54}
  (1967),  649--656.

\bibitem{papadatos1995distance}
N. Papadatos and V. Papathanasiou, \emph{Distance in variation between
  two arbitrary distributions via the associated w-functions}, Theory of
  Probability \& Its Applications \textbf{40} (1995),  567--575.

\bibitem{papathanasiou1995characterization}
V~Papathanasiou, \emph{A characterization of the pearson system of
  distributions and the associated orthogonal polynomials}, Annals of the
  Institute of Statistical Mathematics \textbf{47} (1995), 171--176.

\bibitem{PeRo11}
E. Pek\"oz and A. R\"ollin, \emph{New rates for exponential approximation
  and the theorems of R\'enyi and Yaglom}, The Annals of Probability
  \textbf{39} (2011),  587--608.

\bibitem{PeRoRo12}
E. Pek\"oz, A. R\"ollin, and N. Ross, \emph{Degree asymptotics with
  rates for preferential attachment random graphs}, The annals of Applied
  Probability \textbf{23} (2013), 1188--1218.

\bibitem{Pi04}
A.~Pickett, \emph{Rates of convergence of $\chi^2$ approximations via {S}tein's
  method}, Ph.D. thesis, Lincoln College, University of Oxford, 2004.

\bibitem{PR12}
J. Pike and H. Ren, \emph{{S}tein's method and the {L}aplace
  distribution}, Preprint arXiv:1210.5775 (2012).

\bibitem{Rac91}
S.~T. Rachev, \emph{Probability Metrics and the Stability of
  Stochastic Models}, vol. 334, Wiley New York, 1991.

\bibitem{reinert1998couplings}
G. Reinert, \emph{Couplings for normal approximations with {S}tein's
  method}, DIMACS Ser. Discrete Math. Theoret. Comput. Sci \textbf{41} (1998),
  193--207.

\bibitem{MR2573554}
G. Reinert and A. R\"ollin, \emph{Multivariate normal approximation
  with {S}tein's method of exchangeable pairs under a general linearity
  condition}, The Annals of Probability \textbf{37} (2009), 2150--2173.
 

\bibitem{RO07}
A. R\"ollin, \emph{On {S}tein factors and the construction of examples with
  sharp rates in {S}tein's method}, Preprint arXiv:0706.0879v2 (2007).

\bibitem{RO12}
A. R\"ollin, \emph{On the optimality of {S}tein factors}, Probability
  Approximations and Beyond (2012), 61--72.

\bibitem{Ro11}
N. Ross, \emph{Fundamentals of {S}tein's method}, Probability Surveys
  \textbf{8} (2011), 210--293. 

\bibitem{S01}
W. Schoutens, \emph{Orthogonal polynomials in {S}tein's method}, Journal of
  Mathematical Analysis and Applications \textbf{253} (2001),  515--531.
  

\bibitem{Sh75}
R. Shimizu, \emph{On {F}isher's amount of information for location
  family.}, A Modern Course on Statistical Distributions in Scientific Work,
  Springer, 1975, pp.~305--312.

\bibitem{S72}
C. Stein, \emph{A bound for the error in the normal approximation to the
  distribution of a sum of dependent random variables}, Proceedings of the
  {S}ixth {B}erkeley {S}ymposium on {M}athematical {S}tatistics and
  {P}robability ({U}niv. {C}alifornia, {B}erkeley, {C}alif., 1970/1971), {V}ol.
  {II}: {P}robability theory (Berkeley, Calif.), Univ. California Press, 1972,
  pp.~583--602. 

\bibitem{Stein1986}
C. Stein, \emph{Approximate computation of expectations}, Institute of
  Mathematical Statistics Lecture Notes---Monograph Series, 7, Institute of
  Mathematical Statistics, Hayward, CA, 1986. 

\bibitem{Stein2004}
C. Stein, P. Diaconis, S. Holmes, and G. Reinert, \emph{Use of
  exchangeable pairs in the analysis of simulations}, Stein's method:
  expository lectures and applications (Persi Diaconis and Susan Holmes, eds.),
  IMS Lecture Notes Monogr. Ser, vol.~46, Beachwood, Ohio, USA: Institute of
  Mathematical Statistics, 2004, pp.~1--26.

\bibitem{upadhye2014stein}
N.S.~Upadhye, V.~Cekanavicius, and P.~Vellaisamy, \emph{On Stein operators for
  discrete approximations}, arXiv preprint arXiv:1406.6463 (2014).
\end{thebibliography}


\end{document}